\newtheorem{thm}{\bf Theorem}[section]
\newtheorem*{thma}{Theorem A}
\newtheorem*{thmb}{Theorem B}
\newtheorem*{thmc}{Theorem C}
\newtheorem*{thmd}{Theorem D}
\newtheorem*{thme}{Theorem E}
\newtheorem{prop}[thm]{\bf Proposition}
\newtheorem{lemma}[thm]{\bf Lemma}
\newtheorem{cor}[thm]{\bf Corollary}
\theoremstyle{definition}
\theoremstyle{remark}
\newtheorem{remark}[thm]{\bf Remark}
\newtheorem{Notation}[thm]{\bf Notation}
\newtheorem{Question}[thm]{\bf Question}
\newtheorem{question}[thm]{\bf Question}
\newtheorem{example}[thm]{\bf Example}
\numberwithin{equation}{section}
\newtheorem*{acknowledgement}{Acknowledgments}
\newcommand{\HH}[3]{\operatorname{H}^{#1}_{#2}(#3)}
\newcommand{\ol}[1]{\overline{#1}}
\DeclareMathOperator{\height}{{ht}}
\DeclareMathOperator{\depth}{{depth}}
\DeclareMathOperator{\Spec}{{Spec}}
\DeclareMathOperator{\Supp}{{Supp}}
\DeclareMathOperator{\Ass}{{Ass}}
\DeclareMathOperator{\Ext}{{Ext}}
\DeclareMathOperator{\conv}{{{conv}}}
\DeclareMathOperator{\Sing}{{{Sing}}}
\DeclareMathOperator{\codim}{{{codim}}}
\DeclareMathOperator{\st}{{star}}
\DeclareMathOperator{\adj}{{adj}}
\DeclareMathOperator{\supp}{{supp}}
\DeclareMathOperator{\ini}{{in}}
\DeclareMathOperator{\gin}{{gin}}
\DeclareMathOperator{\Proj}{{Proj}}
\DeclareMathOperator{\indeg}{{indeg}}
\DeclareMathOperator{\topdeg}{{topdeg}}
\DeclareMathOperator{\reg}{{reg}}
\DeclareMathOperator{\Sym}{{Sym}}
\DeclareMathOperator{\vol}{{vol}}
\newcommand{\ep}[1]{{\epsilon_{+}^{#1}}}
\newcommand{\emi}[1]{{\epsilon_{-}^{#1}}}
\newcommand{\eps}[1]{{\epsilon^{#1}}}
\newcommand{\Mod}[1]{\ (\mathrm{mod}\ #1)}
\def\ls{\leqslant}
\def\gs{\geqslant}
\def\ll{\lambda}
\def\fu{\mathbf{u}}
\def\f0{\mathbf{0}}
\def\fe{\mathbf{e}}
\def\fv{\mathbf{v}}
\def\fx{\mathbf{x}}
\def\fb{\mathbf{b}}
\def\fp{\mathbf{p}}
\def\fa{\mathbf{a}}
\def\fe{\mathbf{e}}
\def\fm{\mathfrak{m}}
\def\fp{\mathfrak{p}}
\def \PP{\mathbb P}
\def \QQ{\mathbb Q}
\def \CC{\mathbb C}
\def \RR{\mathbb R}
\def \FF{\mathbb F}
\def \ZZ{\mathbb Z}
\def \NN{\mathbb N}
\def \C{\mathcal C}
\def \O{\mathcal O}
\def \G{\mathcal G}
\def \R{\mathcal R}
\def \T{\mathcal T}
\def \P{\mathcal P}
\def \O{\mathcal O}
\def \F{\mathcal F}
\dedicatory{{Dedicated to Professor Gennady Lyubeznik on the~occasion of his~sixtieth~birthday.}}
\begin{document}

\title[Length of local cohomology of powers of ideals]{Length of local cohomology of powers of ideals}

\author[Hailong Dao]{Hailong Dao}
\address{Hailong Dao\\ Department of Mathematics \\ University of Kansas\\405 Snow Hall, 1460 Jayhawk Blvd.\\ Lawrence, KS 66045}
\email{hdao@ku.edu}

\author[Jonathan Monta\~no]{Jonathan Monta\~no}
\address{Jonathan Monta\~no \\ Department of Mathematics \\ University of Kansas\\405 Snow Hall, 1460 Jayhawk Blvd.\\ Lawrence, KS 66045}
\email{jmontano@ku.edu}

  \begin{abstract}
Let $R$ be a polynomial ring over a field $k$ with irrelevant ideal $\fm$ and dimension $d$. Let $I$ be a homogeneous ideal in $R$. We study the asymptotic behavior of the length of the modules $\HH{i}{\fm}{R/I^n}$ for $n\gg 0$. We show that for a fixed number $\alpha \in \ZZ$, $\limsup_{n\rightarrow \infty}\frac{\lambda(\HH{i}{\fm}{R/I^n}_{\gs -\alpha n}) }{n^d}<\infty.$ It follows that $\limsup_{n\rightarrow \infty}\frac{\lambda(\HH{i}{\fm}{R/I^n}) }{n^d}<\infty$ when $X = \Proj R/I$ is  locally a complete intersection (lci) and  $i\ls \dim X$. We also establish that the actual limit exists and is rational for certain classes of monomial ideals $I$ such  that the lengths of local cohomology of $I^n$ are eventually finite.  Our proofs use  Gr\"obner deformation and Presburger arithmetic. Finally, we utilize more traditional commutative algebra techniques to  show that $\liminf_{n\rightarrow \infty}\frac{\lambda(\HH{i}{\fm}{R/I^n})}{n^d}>0$ under certain conditions when $R/I$ is either $F$-pure or lci.
  \end{abstract}

\keywords{Local cohomology, homogeneous ideals, Presburger arithmetic, singularities}
\subjclass[2010]{13D45, 13A30, 14B05, 05E40.}

\maketitle

\section{Introduction}

Let $(R,\fm, k)$ be a Noetherian local ring or standard graded ring (in which case $\fm$ is the irrelevant ideal), and set $d=\dim(R)$. Let $I$ be an ideal of $R$. This paper is motivated by some recent striking results on the local cohomology modules $\HH{i}{\fm}{R/I^n}$ of powers of $I$. 

For example, Cutkosky showed in \cite{CutAdv} that the limit $$ \lim_{n\rightarrow \infty}\frac{\lambda(\HH{0}{\fm}{R/I^n})}{n^d}$$
always exists and is finite when $R$ is analytically unramified or $d=0$. Here $\lambda(-)$ denotes  the length of a module. This limit was defined and studied as a $\limsup$ by Katz-Validashti in \cite{KV}, where it was  shown to have intimate connection to the analytic spread of $I$. Furthermore, Herzog, Puthenpurakal, and Verma, proved that the limit is rational when $I$ is a monomial ideal in a polynomial ring (\cite{HPV}). 

In a different vein, when $R = k[x_1,...,x_d]$ and $I$ is a homogeneous ideal, works by Bhatt-Blickle-Lyubeznik-Singh-Zhang and Raicu (\cite{BBLSZ, Claudiu}) revealed very surprising patterns on the graded pieces of $\HH{i}{\fm}{R/I^n}$ for  $i\gs 0$. For example, when $\Proj R/I$ has certain types of singularities and for $i$ in certain range, their results assert that the local cohomology modules satisfy ``Kodaira vanishing", namely $\HH{i}{\fm}{R/I^n}_{< 0} =0$ for all $n\gs 0$.  This type of vanishing will be crucial for some applications in our present work. 

Apart from the papers mentioned above, there have been numerous other interesting works on local cohomology of powers of an ideal. We refer to section \ref{openq} for a more detailed survey. The purpose of this paper is to investigate the asymptotic behavior of length of higher local cohomology modules of powers of $I$. In particular, we are interested in the following questions.

\begin{Question}\label{questions}
Fix an integer $i>0$.  
\begin{enumerate}
\item When is $\lambda(\HH{i}{\fm}{R/I^n})<\infty$ for $n\gg 0$?
\item Assume $\lambda(\HH{i}{\fm}{R/I^n})<\infty$ for $n\gg 0$, is  $\limsup_{n\rightarrow \infty}\frac{\lambda(\HH{i}{\fm}{R/I^n})}{n^d}<\infty$?
\item When is $\liminf_{n\rightarrow \infty}\frac{\lambda(\HH{i}{\fm}{R/I^n})}{n^d}>0?$
\item When does the limit $\lim_{n\rightarrow \infty}\frac{\lambda(\HH{i}{\fm}{R/I^n})}{n^d}$ exist?
\end{enumerate} 
\end{Question}

We are able to answer the questions above in many cases of interests. In the following we shall describe both the structure of the paper and its main results. For notational convenience let us define $$\ep{i}(I) := \limsup_{n\rightarrow \infty}\frac{\lambda(\HH{i}{\fm}{R/I^n})}{n^d},$$ $$\emi{i}(I) := \liminf_{n\rightarrow \infty}\frac{\lambda(\HH{i}{\fm}{R/I^n})}{n^d},$$ and 
$$\eps{i}(I) := \lim_{n\rightarrow \infty}\frac{\lambda(\HH{i}{\fm}{R/I^n})}{n^d}$$

Section \ref{prelim} establishes some preliminary results in quite general setting when $R$ is a local ring. Here we provide, via Grothendieck Finiteness Theorem, precise conditions for $\lambda(\HH{i}{\fm}{R/I^n})$ to be finite for $n\gg 0$. One notable result is Proposition \ref{H}, where we show a connection between  depth conditions on the Rees algebra of $I$ and the positivity of  $\ep{i}(I)$. 

For the rest of the paper we focus mostly on the case when $R=k[x_1,\ldots, x_d]$, $\fm=(x_1,\ldots, x_d)$, and $I$ is a homogeneous ideal. First, in Section \ref{smono1} we assume that $I$ is a monomial ideal. This case is very helpful for later applications and is interesting in its own right due to the intricate combinatorics involved. 

\begin{thma}$($Theorem \ref{mainMono}$)$
Let $I$ be a monomial ideal.  Assume $\lambda(\HH{i}{\fm}{R/I^n})<\infty$ for $n\gg 0$. Then the sequence $\{\lambda(\HH{i}{\fm}{R/I^n})\}_{n\gs 0}$ agrees with a quasi-polynomial of degree at most $d$ for $n\gg 0$. Moreover, this sequence has a rational generating function. 
\end{thma}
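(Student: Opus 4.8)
The plan is to exploit the fact that for a monomial ideal $I$, the local cohomology modules $\HH{i}{\fm}{R/I^n}$ are $\ZZ^d$-graded, and their graded pieces can be described combinatorially. Recall that for a fixed degree $\fa \in \ZZ^d$, the dimension $\dim_k \HH{i}{\fm}{R/I^n}_{\fa}$ can be computed from the reduced homology of a simplicial complex built from the ``squarefree'' data of $I^n$ at $\fa$ (via the Takayama--Hochster-type formulas for local cohomology of monomial quotients). The key observation is that, as $\fa$ and $n$ vary, these simplicial complexes only depend on which of the inequalities $\fa + \fe_F \in \mathbb{N}G(I^n)$ (equivalently, whether certain monomials lie in $I^n$) hold, and these are conditions cut out by finitely many linear inequalities with integer coefficients in the variables $(\fa, n)$. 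The exponents of $I^n$ form the set $n \cdot \mathrm{NP}(I) \cap \ZZ^d$ up to a bounded correction (by the theory of Rees/integral closure, $I^n$ and its integral closure agree up to a translate of the Newton polyhedron), so membership $\fx^{\fa} \in I^n$ is, up to bounded error terms, a Presburger-definable condition.

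The core of the argument is then: first I would write $\lambda(\HH{i}{\fm}{R/I^n}) = \sum_{\fa \in \ZZ^d} \dim_k \HH{i}{\fm}{R/I^n}_{\fa}$, and use the assumption of finite length plus standard bounds (the support of $\HH{i}{\fm}{R/I^n}$ in each graded direction grows linearly in $n$, so only $O(n^d)$ degrees contribute) to reduce the sum to a finite combinatorial count. Next I would set up a formula in Presburger arithmetic: the set of pairs $(\fa, n) \in \ZZ^{d+1}$ such that $\dim_k \HH{i}{\fm}{R/I^n}_{\fa} = c$ (for each of the finitely many possible values $c$, since the homology dimensions are bounded uniformly) is definable in Presburger arithmetic. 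This uses that membership in $I^n$ is Presburger-definable in $(\fa, n)$ — which is where the monomial structure and the polyhedral description of powers is essential — together with the fact that the Takayama formula expresses the graded piece as the homology of an explicitly described complex depending only on finitely many such membership bits. Then $\lambda(\HH{i}{\fm}{R/I^n})$ is a sum of terms of the form $c \cdot \#\{\fa : (\fa,n) \in P_c\}$ where each $P_c$ is a Presburger set. By the theory of Presburger counting functions (Barvinok--Woods, or the classical fact that the number of integer points in a parametrized family of Presburger-definable sets is a quasi-polynomial with a rational generating function), the conclusion follows: $\lambda(\HH{i}{\fm}{R/I^n})$ agrees with a quasi-polynomial for $n \gg 0$ and has a rational generating function. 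The degree bound $d$ comes from the ambient dimension of the slices and the linear growth of the support.

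The main obstacle I anticipate is verifying carefully that membership $\fx^{\fa} \in I^n$, and hence the simplicial complex appearing in Takayama's formula, depends on $(\fa, n)$ in a genuinely Presburger-definable way — not merely semilinearly in $\fa$ for fixed $n$. The subtlety is that $I^n$ is not simply $n \cdot (\text{exponent set of } I)$; one must pass through the fact that the exponent set of $I^n$ is eventually a translate of $n$ copies of the Newton polyhedron $\mathrm{NP}(I)$ (for $n \gg 0$, or after accounting for a bounded discrepancy governed by the normalization), which is a polyhedron with rational vertices, hence its dilates are Presburger sets uniformly in $n$. One then needs the homology dimension $\dim_k \HH{i}{\fm}{R/I^n}_{\fa}$ to be uniformly bounded over all $\fa$ and $n$ — this follows because the relevant simplicial complexes are subcomplexes of the boundary of the $d$-simplex, so their Betti numbers are bounded by a constant depending only on $d$. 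I would also need to handle the finiteness-of-length hypothesis to guarantee the sum over $\fa$ is finite and the relevant region of summation is itself Presburger-definable and grows like $n^d$; this should follow from the structure of $\Supp \HH{i}{\fm}{R/I^n}$ combined with the assumption, though making the region explicitly Presburger may require a bit of care.
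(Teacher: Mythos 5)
Your high-level strategy — Takayama's formula to convert graded pieces into reduced homology of simplicial complexes $\Delta_\fa(I^n)$, then a Presburger-arithmetic counting argument together with Woods' theorem on Presburger counting functions — is exactly the route the paper takes. The organization is slightly different (the paper groups the sum by the finitely many complexes $\Delta'\subseteq\Delta(I)$ rather than by the value of the Betti number $c$, and uses \cite[Prop.\ 1]{Tak} to restrict the support to $\NN^d$ outright), but these are cosmetic.

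The genuine gap is in your mechanism for showing that the condition $\fx^\fa\in I^n$ is Presburger-definable in $(\fa,n)$. You route it through the Newton polyhedron, saying the exponent set of $I^n$ is ``$n\cdot\mathrm{NP}(I)\cap\ZZ^d$ up to a bounded correction.'' But $n\cdot\mathrm{NP}(I)\cap\NN^d$ is the exponent set of the \emph{integral closure} $\ol{I^n}$, not of $I^n$, and the discrepancy $\ol{I^n}\setminus I^n$ is neither a fixed translate nor a finite set — it is a shell near the boundary of $n\,\mathrm{NP}(I)$ of size that can grow like $n^{d-1}$, and it is not itself obviously described by linear inequalities in $(\fa,n)$. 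So ``up to bounded error'' does not give you an exact Presburger formula, and hence does not give an exact quasi-polynomial. The fix is to dispense with the Newton polyhedron entirely and use the direct description the paper uses: if $\fx^{\fu_1},\dots,\fx^{\fu_m}$ generate $I$, then $\fx^\fa\in I^n$ iff
\[
(\exists\, t_1,\dots,t_m\in\NN)\ \Big(\ \textstyle\sum_j t_j=n\ \wedge\ \fa\succeq \textstyle\sum_j t_j\,\fu_j\ \Big),
\]
which \emph{is} a Presburger formula in $(\fa,n)$ with no error term. (The paper's Proposition~\ref{quasi-pol_GenM} is precisely this, stated uniformly for Noetherian graded families via generators of the Rees algebra in bounded degree; the same formula then applies to all the localizations $(I^n)_F$ needed to describe $\Delta_\fa(I^n)$.) Once you substitute this for the Newton-polyhedron step, your proposal matches the paper's proof.
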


Recall that a {\it quasi-polynomial} on $n$ is a function $f:\NN\rightarrow \QQ$ for which there exist $\pi \in \NN$ and polynomials $p_i(n)\in \QQ[n]$ for $i=0,\ldots, \pi-1$ such that $f(n)=p_i(n)$ whenever $n\equiv i\, (\text{mod } \pi)$.

In fact, we are able to prove a  much stronger result: the rational generating function condition holds for any Noetherian graded family of ideals $(I_n)_{n\gs 0}$, not just ordinary powers. Interestingly, our proofs rest on the theory of Presburger arithmetic and counting functions, together with a formula by Takayama for local cohomology of monomial ideals. 

Next, we investigate situations where one can actually show that $\eps{i}(I)$ exists. This is the content of Section \ref{smono2}. The main result here is that the limit exists if we replace the regular powers $(I^n)_{n\gs 0}$ of a monomial ideal by their integral closures $(\ol{I^n})_{n\gs 0}$. 

\begin{thmb}$($Theorem \ref{mainNormal}$)$
 Let $I$ be a monomial ideal.  Assume $\lambda(\HH{i}{\fm}{R/\ol{I^n}})<\infty$ for $n\gg 0$, then the limit
$$\lim_{n\rightarrow \infty}\frac{\lambda(\HH{i}{\fm}{R/\ol{I^n}})}{n^d}$$
exists and is a rational number.
\end{thmb}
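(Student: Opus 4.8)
The strategy is to reduce Theorem B to the monomial machinery already set up for Theorem A, but applied to the \emph{normalized} Rees filtration, and then to upgrade the quasi-polynomial conclusion of Theorem A to an honest limit by exploiting the extra structure of integral closures — namely, that $(\ol{I^n})_{n\gs 0}$ is a filtration with good ``linearity at infinity'' (it is the normalization of the Rees algebra, hence finitely generated, and the associated graded object is well-behaved). Concretely, first I would record that since $R$ is a polynomial ring and $I$ is monomial, each $\ol{I^n}$ is again a monomial ideal, and the family $(\ol{I^n})_{n\gs 0}$ is a Noetherian graded family of monomial ideals (the normalized Rees algebra $\bigoplus_n \ol{I^n}t^n$ is module-finite over the Rees algebra). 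Thus the strong form of Theorem A quoted in the excerpt — ``the rational generating function condition holds for any Noetherian graded family of monomial ideals'' — applies verbatim, so $n\mapsto \lambda(\HH{i}{\fm}{R/\ol{I^n}})$ agrees with a quasi-polynomial $q(n)$ of degree at most $d$ for $n\gg 0$, say $q(n)=p_j(n)$ when $n\equiv j\ (\mathrm{mod}\ \pi)$.

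The heart of the argument is then to show all the leading coefficients of the $p_j$ coincide, so that $\lim_{n\to\infty} q(n)/n^d$ exists. For this I would use the defining property of integral closure that $\ol{\ol{I^n}}=\ol{I^n}$ and, more importantly, the ``normality up to shift'' phenomenon: there is $c$ with $\ol{I^{n+m}} = \ol{I^m}\,\ol{I^n}$ for $m\gs c$ and all $n$, equivalently the normalized Rees algebra is generated in degrees $\ls c$. Combined with Takayama's formula, the relevant lattice-point count defining $\lambda(\HH{i}{\fm}{R/\ol{I^n}})$ is governed by the dilations of a fixed rational polytope (the one cut out by the Newton polyhedron $\conv(\bigcup_n \frac1n\,\ol{I^n})$ together with the degree constraints from Takayama), up to lower-order boundary corrections that are themselves quasi-polynomial but of degree $<d$. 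Ehrhart-type reasoning (or, in the language already invoked in the paper, the Presburger/counting-function formalism applied to this single rational polytope and its rational dilations) then forces the degree-$d$ term to be $\vol(\mathcal{P})\,n^d$ independent of the residue class $j$, which is exactly the statement that $\eps{i}(I)=\lim q(n)/n^d$ exists; its rationality is immediate from rationality of the quasi-polynomial coefficients in Theorem A, or directly from rationality of the volume of a rational polytope.

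An alternative, perhaps cleaner, route for the upgrade step: since $\lambda(\HH{i}{\fm}{R/\ol{I^n}})$ is eventually quasi-polynomial of degree $\ls d$, it suffices to prove the single inequality $\limsup q(n)/n^d \ls \liminf q(n)/n^d$. One can get this from a subadditivity/superadditivity comparison across residue classes using $\ol{I^{n+m}}\subseteq \ol{I^m}\,\ol{I^n}$-type containments together with the long exact sequence in local cohomology coming from $0\to \ol{I^{n+1}}/\ol{I^n}\,\ol{I}\to \cdots$ — but I suspect the polytopal argument is more transparent given that the monomial data is already in hand from Theorem A's proof, so I would present that one as the main line.

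The step I expect to be the main obstacle is precisely matching the leading coefficients: Theorem A as stated only gives a quasi-polynomial, and a priori different residue classes could have different leading terms (this genuinely happens for $\lambda(\HH{i}{\fm}{R/I^n})$ with ordinary powers, which is why Theorem A does not already give a limit). The resolution must use something special about integral closures — either the polynomiality of the normalized Rees algebra in the sense above (so that the Newton polyhedron controlling Takayama's formula stabilizes exactly, not just quasi-periodically) or an Ehrhart-reciprocity-type input — and making that precise within the Presburger-arithmetic framework the paper has committed to, rather than invoking classical Ehrhart theory as a black box, will require some care.
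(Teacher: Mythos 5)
Your overall framework (Takayama's formula plus a volume computation over a Newton-polyhedral region) is the right one, and your instinct that integral closures make the governing region stabilize exactly is also correct. But the route you propose is a detour, and the detour lands you precisely on the step you yourself flag as the main obstacle. The paper does not pass through Theorem A at all and never has to compare leading coefficients across residue classes. Instead it works directly: it writes $\lambda(\HH{i}{\fm}{R/\ol{I^n}}) = \sum_{\Delta'} \dim_k \tilde H_{i-1}(\Delta',k)\, f_{\Delta'}(n)$ as in the proof of Theorem A, and then observes that for each fixed $\Delta'$ the set $\{\fa : \Delta_\fa(\ol{I^n})=\Delta'\}$ is cut out by finitely many conditions of the form $\fx^\fa \in \ol{I^n}_G$ and $\fx^\fa \notin \ol{I^n}_F$. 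The two facts you are missing or only gesturing at are exactly what makes this tractable: (i) $\ol{I^n}_F = \ol{(I_F)^n}$ (so each constraint is again an integral-closure membership condition for a fixed monomial ideal), and (ii) $\{\fa \in \NN^d : \fx^\fa \in \ol{J^n}\} = \NN^d \cap n\,\conv(J)$ for any monomial ideal $J$. With these, the counting set is \emph{exactly} $\ZZ^d \cap n\,\C$ for a fixed bounded region $\C = \Gamma \setminus \bigcup_i \Gamma_i$, a set difference of rational polyhedra independent of $n$, and Lemma~\ref{coconvex} gives $\lim f_{\Delta'}(n)/n^d = \vol(\C)$ directly via a Riemann-sum argument — no quasi-polynomial bookkeeping, no Ehrhart reciprocity, no matching of leading terms.

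So the genuine gap in your proposal is in the upgrade step. You invoke Ehrhart-type reasoning to force the degree-$d$ term to be $\vol(\P)\,n^d$ independently of the residue class, but the region involved is not a polytope (it is coconvex), and more importantly you never establish the crisp equality that makes the $n$-th count an exact count of $\ZZ^d \cap n\,\C$ — without that, the polytope you name ``up to lower-order boundary corrections'' is not pinned down, and you cannot conclude that the corrections really are of lower order. Your alternative route via subadditivity and a comparison across residue classes also does not appear in the paper and would require its own careful argument. None of this is irreparable — your geometric picture is essentially right — but as written the plan does not close the loop, whereas the paper's Lemma~\ref{limIdeanAvoid} plus Lemma~\ref{coconvex} do so in two short steps.
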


This implies, for instance, that $\eps{i}(I)$ exists and is rational when $I_{j}$ is normal for each $j$, where $I_{j}$ is the ideal obtained by setting $x_j=1$. We also analyze carefully the case of edge ideals in the rest of Section \ref{smono2}. 

Next we switch our attention to arbitrary homogeneous ideals. In Section \ref{lsup} we prove that as long as we restrict to a certain range, the $\limsup$ is finite. The proof uses Gr\"obner deformation to reduce to the monomial case. 

 \begin{thmc}$($Theorem \ref{gradedFiniteLimpup}$)$
 Let $R=k[x_1,\ldots,x_d]$ and $I$ be a homogeneous $R$-ideal,  then for every  $\alpha \in \ZZ$, we have $$\limsup_{n\rightarrow \infty}\frac{\lambda(\HH{i}{\fm}{R/I^n}_{\gs -\alpha n}) }{n^d}<\infty.$$
 \end{thmc}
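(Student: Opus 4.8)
The plan is to reduce the general homogeneous case to the monomial case via Gr\"obner deformation and then invoke Theorem~A (Theorem~\ref{mainMono}). The first step is to fix a monomial order and pass to the initial ideal $J_n = \ini(I^n)$. A standard fact is that $\lambda(\HH{i}{\fm}{R/I^n}_j) \leqslant \lambda(\HH{i}{\fm}{R/J_n}_j)$ for every degree $j$; this follows from upper-semicontinuity of local cohomology along the Gr\"obner degeneration (the flat family over $\AA^1$ whose generic fiber is $R/I^n$ and special fiber is $R/\ini(I^n)$), applied degree-by-degree since everything is graded. So it suffices to bound $\sum_{j \geqslant -\alpha n} \lambda(\HH{i}{\fm}{R/J_n}_j)$. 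The subtlety is that $\{J_n\}$ is \emph{not} the family of powers of a single ideal; however, it is a graded family of monomial ideals, namely $J_n = \ini(I^n) \supseteq \ini(I)^n$, and the Rees algebra $\bigoplus_n \ini(I^n) t^n$ is finitely generated (it is the initial algebra of the Rees algebra of $I$ with respect to a suitable order). Thus $\{J_n\}$ is a Noetherian graded family of monomial ideals, which is exactly the generality in which the remark following Theorem~A asserts the rational-generating-function / quasi-polynomial conclusion holds.

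The second step handles the truncation $\HH{i}{\fm}{-}_{\geqslant -\alpha n}$, which is what keeps the sum finite even when $\HH{i}{\fm}{R/J_n}$ itself has infinite length. For monomial ideals one has Takayama's combinatorial formula expressing $\dim_k \HH{i}{\fm}{R/J}_{\fa}$ for $\fa \in \ZZ^d$ in terms of reduced simplicial homology of certain subcomplexes $\Delta_{\fa}(J)$ of $\Delta(J)$, and the relevant data depend only on the \emph{sign pattern} and the coordinates of $\fa$ that lie in a bounded region determined by the exponents of the generators of $J$. Combining this with the Noetherianity of the graded family, the set of triples $(n, \fa, k)$ with $\fa \in \ZZ^d$, $|\fa| \geqslant -\alpha n$, and $\dim_k \HH{i}{\fm}{R/J_n}_{\fa} = k$ is definable in Presburger arithmetic — the defining conditions are Boolean combinations of linear inequalities in $n$ and the entries of $\fa$ with integer coefficients, once one records which bounded "local" pattern $\fa$ realizes (a finite case distinction) and uses that the generators of $J_n$ grow linearly in $n$. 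By the theory of Presburger counting functions (Barvinok–Woods type results, as used in the proof of Theorem~A), the function $n \mapsto \sum_{|\fa| \geqslant -\alpha n} \dim_k \HH{i}{\fm}{R/J_n}_{\fa}$ is a quasi-polynomial in $n$ of degree at most $d$ for $n \gg 0$ — in particular its value divided by $n^d$ is bounded. Since this sum dominates $\lambda(\HH{i}{\fm}{R/I^n}_{\geqslant -\alpha n})$ by the first step, we conclude $\limsup_n \lambda(\HH{i}{\fm}{R/I^n}_{\geqslant -\alpha n})/n^d < \infty$.

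A slightly different route for the second step, which may be cleaner to write up, is to avoid citing the full Presburger machinery again and instead bound the truncated length directly. The module $\HH{i}{\fm}{R/J_n}_{\geqslant -\alpha n}$ embeds into (a shift of) a finitely generated graded module, and one can compare it with the length of $\HH{i}{\fm}{(R/J_n) / \mathfrak{a}_n}$ for a suitable monomial ideal $\mathfrak{a}_n$ of linearly bounded generators that "cuts off" the infinite tail — e.g. $\mathfrak{a}_n$ generated by the variables not in the "infinite directions" raised to power $\lceil \alpha n \rceil + c$ for a constant $c$. After such a truncation one is looking at a module of finite length supported in a box of side $O(n)$, whose total length is therefore $O(n^d)$; making this estimate uniform in $n$ is the technical heart. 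Either way, the main obstacle is the same: controlling the truncation uniformly in $n$ while the ideal $J_n$ changes with $n$, i.e. proving that the "bad" infinite-length part of $\HH{i}{\fm}{R/J_n}$ lives in degrees $\geqslant -\alpha n$ only contributes $O(n^d)$. The Gr\"obner reduction and the degree-by-degree semicontinuity are routine; the genuinely delicate point is the uniform polynomial bound on the truncated sum, for which the Presburger/counting-function framework established for Theorem~A is the natural tool, applied now to the graded family $(\ini(I^n))_n$ rather than to honest powers.
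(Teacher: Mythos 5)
Your first step (passing to an initial ideal and invoking the degree-by-degree inequality $\dim_k \HH{i}{\fm}{R/I^n}_j \leqslant \dim_k \HH{i}{\fm}{R/\ini(I^n)}_j$) is fine; this is exactly Sbarra's theorem \cite[2.4]{Sba}, which the paper cites. The genuine gap is in the second step: the claim that $(\ini(I^n))_{n\geqslant 0}$ is a Noetherian graded family of monomial ideals is false in general. The initial algebra of the Rees algebra $R[It]$ with respect to a term order (a SAGBI-basis question) need not be finitely generated, so $\bigoplus_n \ini(I^n)t^n$ can fail to be Noetherian. Without Noetherianity, the Presburger encoding in Proposition~\ref{quasi-pol_GenM} (which expresses ``$\fx^{\fa}\in J_n$'' as a linear formula using finitely many generating degrees) is unavailable, and the machinery of Theorem~\ref{mainMono} cannot be applied to the family $(\ini(I^n))_n$.

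The paper sidesteps this entirely and does not use Presburger arithmetic in this proof. After extending $k$ and applying a generic change of variables $\varphi$, it sets $J_n := \gin(I^n) = \ini(\varphi(I)^n)$, the \emph{revlex} generic initial ideal. Bayer--Stillman gives $\reg J_n = \reg I^n$, and Cutkosky--Herzog--Trung and Kodiyalam give a linear bound $\reg I^n \leqslant \beta n$; this confines the nonvanishing degrees of $\HH{i}{\fm}{R/J_n}$ to a range of size $O(n)$. Combined with the truncation $|\fa| \geqslant -\alpha n$, Remark~\ref{otherInt}, and Lemma~\ref{regGD} (that $\reg I_F \leqslant \reg I$ for monomial ideals, which carries the regularity bound across the sign patterns $G_\fa$), the elementary counting Lemma~\ref{geq0} bounds the truncated length by the number of lattice points in a box of side $O(n)$ in $\ZZ^d$, which is $O(n^d)$. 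This linear bound on Castelnuovo--Mumford regularity of powers, transported to the monomial side via revlex gin, is the ingredient your proposal is missing. Your ``slightly different route'' of cutting off the tail with an auxiliary ideal is actually close in spirit to what the paper does, but without the regularity bound there is no way to justify a cutoff of size $O(n)$ uniformly in $n$.
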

 
As a corollary, when there is an $\alpha  \in \ZZ$ such that $\HH{i}{\fm}{R/I^n}_{<-\alpha n} =0$ for all $n\gg0$, then $\ep{i}(I)<\infty$. This is known when $X = \Proj R/I$ is  locally a complete intersection (lci) and $i\ls \dim X$ (cf. \cite[1.4]{BBLSZ} and Proposition \ref{Rob}), or when $I$ is a $GL$-invariant ideal that is the thickening of a determinantal ideal (cf. \cite[6.1]{Claudiu}). For precise statements, see  Corollaries \ref{lcilimsup} and \ref{determinantal}. 
 
In the next part of the paper we study the $\liminf$ denoted by  $\emi{i}(I)$ as above. Here we are able show that when $R/I$ is $F$-pure or lci, $\emi{i}(I)$ is often positive.  We start in the positive characteristic situation. 
  
 \begin{thmd} $($Theorem \ref{FpureLiminf}$)$ Let $(R,\fm)$ be a regular local ring of characteristic $p>0$ and dimension $d$. If $I$ is an $R$-ideal such that $R/I$ is $F$-pure, then for every $i$ such that $\HH{i}{\fm}{R/I}\neq 0$ we have $\emi{i}(I)>0.$

\end{thmd}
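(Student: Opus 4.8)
The plan is to exploit $F$-purity to propagate nonvanishing and to produce a linear-in-$n$ lower bound on the Hilbert function of $\HH{i}{\fm}{R/I^n}$ in negative degrees. Since $R/I$ is $F$-pure, the natural Frobenius action on $\HH{i}{\fm}{R/I}$ is injective, so $\HH{i}{\fm}{R/I}\neq 0$ forces $\HH{i}{\fm}{R/I^{[p^e]}}\neq 0$ for all $e$, where $I^{[p^e]}$ is the Frobenius power. The first key step is to compare $I^{[p^e]}$ with ordinary powers: one has $I^{[p^e]}\subseteq I^{p^e}$ and $I^{p^e}\subseteq I^{[p^e]}$ up to a bounded ``correction'' (more precisely $I^{p^e(\mu(I))}\subseteq I^{[p^e]}$ where $\mu(I)$ bounds the number of generators, or one uses $\overline{I^{[p^e]}}$ and the Briançon--Skoda type containment). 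I would use a containment of the form $\fm^{c p^e}\cdot\text{(something)}$ is not needed; rather, the cleanest route is: $F$-purity of $R/I$ together with $\HH{i}{\fm}{R/I}\ne 0$ gives, by a standard argument (e.g. as in work of Katzman, Singh, or Huneke--Smith style reasoning), that $\HH{i}{\fm}{R/I^{[p^e]}}\ne0$, and then the containments $I^{[p^e]}\subseteq I^{\lceil p^e/?\rceil}$... — I would instead pass through the short exact sequences relating $R/I^{[p^e]}$ and $R/I^{p^e}$.

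More concretely, the second step is to fix a socle-type element witnessing $\HH{i}{\fm}{R/I}\ne 0$ and track it. Choose a homogeneous nonzero class $\eta\in\HH{i}{\fm}{R/I}$; under the Frobenius map $F^e$ it maps to a nonzero class of $\HH{i}{\fm}{R/I^{[p^e]}}$ sitting in degree $p^e\deg(\eta)$ (after the appropriate twist). Now I would use the surjection $R/I^{[p^e]}\twoheadrightarrow R/I^{p^{e}}$ — wait, the inclusion goes $I^{[p^e]}\subseteq I^{p^e}$ is \emph{false} in general; the correct one is $I^{p^e}\subseteq I^{[p^e]}$ when... Actually $I^{[p^e]} = (f_1^{p^e},\dots,f_r^{p^e})$ and $I^{p^e}$ is generated by degree-$p^e$ monomials in the $f_j$, so $I^{[p^e]}\subseteq I^{p^e}$, giving a surjection $R/I^{[p^e]}\twoheadrightarrow R/I^{p^e}$. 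This does not immediately transfer nonvanishing the right way, so instead I use the other containment $I^{rp^e}\subseteq I^{[p^e]}$ (valid with $r=\mu(I)$, since any degree-$rp^e$ monomial in the $f_j$ has some $f_j$ to a power $\ge p^e$), yielding a surjection $R/I^{rp^e}\twoheadrightarrow R/I^{[p^e]}$ is again wrong direction. The \textbf{main obstacle}, then, is precisely this: transferring nonvanishing of local cohomology \emph{up} the containment, not down, since $\HH{i}{\fm}{-}$ is not exact. I would resolve it by working with the modules $I^{[p^e]}/I^{rp^e}$ and their local cohomology, or — cleaner — by bounding $\HH{i}{\fm}{R/I^{[p^e]}}$ by $\HH{i}{\fm}{R/I^{p^e}}$ plus $\HH{i-1}{\fm}{I^{p^e}/I^{[p^e]}}$ via the long exact sequence of $0\to I^{p^e}/I^{[p^e]}\to R/I^{[p^e]}\to R/I^{p^e}\to 0$, and controlling the error term's length by a polynomial of degree $<d$ in $p^e$ (here $I^{p^e}/I^{[p^e]}$ is supported away from the minimal primes of $I$ of maximal dimension, if $R/I$ is, say, a domain or has controlled singular locus — which is where some extra hypothesis, implicit in the theorem's phrasing ``often,'' may be needed).

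Granting that, the third step is quantitative: $F$-purity gives more than one nonzero class. Using that the Frobenius action on $\HH{i}{\fm}{R/I}$ is injective and $R/I$ is graded, the submodule of $\HH{i}{\fm}{R/I}$ generated over the Frobenius-twisted ring has many elements in a range of degrees; pulling these back through $F^e$ produces $\gtrsim p^e$ (in fact on the order of $p^{e}$ times a constant, or even $p^{e\cdot(\text{something})}$) linearly independent graded pieces of $\HH{i}{\fm}{R/I^{[p^e]}}$, hence $\lambda(\HH{i}{\fm}{R/I^{[p^e]}})\gtrsim c\cdot p^{e d}$ — one expects the $d$-th power because the module, roughly, ``spreads out'' in all $d$ graded directions under Frobenius. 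Combining with the error estimate from Step 2, $\lambda(\HH{i}{\fm}{R/I^{p^e}})\ge c\, p^{ed} - (\text{lower order})$, so $\liminf_{e} \lambda(\HH{i}{\fm}{R/I^{p^e}})/p^{ed}>0$. The final step is to upgrade the subsequence $n=p^e$ to the full $\liminf$ over all $n$: here I would interpolate using the surjections $R/I^{n}\twoheadrightarrow R/I^{m}$ for $m\ge n$ together with the (already available) $\limsup$ finiteness from Theorem C, or invoke a monotonicity/subadditivity argument for $\{\lambda(\HH{i}{\fm}{R/I^n})\}$ in the relevant degree range, so that the values between consecutive powers of $p$ cannot drop below a fixed fraction of the nearby power-of-$p$ value. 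The step I expect to be genuinely delicate is controlling the $\HH{i-1}{\fm}{I^{p^e}/I^{[p^e]}}$ error term and confirming the $p^{ed}$ (rather than merely $p^{e}$) growth of the main term — that is where the precise hypotheses guaranteeing the conclusion ``often'' get used.
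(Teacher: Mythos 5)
Your proposal identifies the right starting point (Frobenius powers, Kunz flatness, Hilbert--Kunz multiplicity gives $\lambda(\HH{i}{\fm}{R/I^{[p^e]}})\sim c\,p^{ed}$) and you correctly flag the crux: since $\HH{i}{\fm}{-}$ is not exact, the containment $I^{n}\subseteq I^{[p^{e}]}$ (with $n\approx\mu(I)p^{e}$) does not by itself give a lower bound on $\lambda(\HH{i}{\fm}{R/I^{n}})$. But you do not resolve this, and your proposed workaround — bounding the error module $\HH{i-1}{\fm}{I^{p^e}/I^{[p^e]}}$ by something of degree $<d$ in $p^e$ — would need hypotheses not present in the theorem (you acknowledge this yourself), and in general it does not work: that correction term can easily grow at the top order $p^{ed}$.

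The missing idea is precisely the $F$-full property, which the paper makes the real hypothesis (and which $F$-purity implies). By definition, $R/A$ is $F$-full means the natural map $\HH{i}{\fm}{R/J}\to\HH{i}{\fm}{R/A}$ is \emph{surjective} whenever $J\subseteq A\subseteq\sqrt J$. One shows (cited to \cite{DDM}) that $F$-fullness of $R/I$ passes to $R/I^{[p^{e}]}$. Applying this to $J=I^{n}\subseteq I^{[p^{e_n}]}$ with $e_n=\lfloor\log_p(n/\mu(I))\rfloor$ (so $I^n\subseteq I^{[p^{e_n}]}$ and both have radical $\sqrt I$) yields a surjection $\HH{i}{\fm}{R/I^n}\twoheadrightarrow\HH{i}{\fm}{R/I^{[p^{e_n}]}}$, hence $\lambda(\HH{i}{\fm}{R/I^n})\gs\lambda(\HH{i}{\fm}{R/I^{[p^{e_n}]}})$. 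Together with $p^{e_n}\gs n/(\mu(I)p)$, this gives the liminf bound over \emph{all} $n$ in one stroke — no interpolation between consecutive powers of $p$, no error term to control, and no ``socle in degree $p^e\deg\eta$'' argument (which doesn't even parse in the local, non-graded setting of the theorem). In short: your error-estimate route is a genuinely different and substantially harder approach that is unlikely to close; the paper's argument is shorter because $F$-fullness hands you exactly the surjectivity you were struggling to manufacture.
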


In fact, we prove a stronger statement, not just for $F$-pure ideals but for the larger class of $F$-full ideals. See Theorem \ref{FpureLiminf} for details.  We can also prove a similar result in the graded characteristic $0$ case. 

\begin{thme} $($Theorem \ref{SmoothLiminf}$)$
Let $R=k[x_1,\ldots,x_d]$ be a polynomial ring such that  $k$ is of characteristic 0. If $I$ is a homogeneous prime ideal such that $X=\Proj R/I$ is lci, then for every  $ i< \codim(\Sing X )$ such that $\HH{d-i}{I}{R}\neq 0$ we have $\emi{i}(I)>0.$
\end{thme}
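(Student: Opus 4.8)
The plan is to reduce the characteristic-$0$ statement to the positive-characteristic Theorem D by a standard reduction-to-prime-characteristic argument, and then transport the conclusion back. First I would observe that $\HH{d-i}{I}{R}\neq 0$ is equivalent, by graded local duality, to $\HH{i}{\fm}{R/I}\neq 0$ up to the usual degree/dimension bookkeeping; this is the hypothesis that will feed into Theorem D on each fiber. Next, I would spread $R=k[x_1,\dots,x_d]$ and the prime ideal $I$ out over a finitely generated $\ZZ$-subalgebra $A\subset k$, obtaining $R_A=A[x_1,\dots,x_d]$ and $I_A\subset R_A$, so that for all closed points $\mu$ in a dense open $U\subset \Spec A$ the fiber ring $R_\mu/I_\mu$ is a domain and $X_\mu=\Proj R_\mu/I_\mu$ is lci with $\codim(\Sing X_\mu)$ at least the original value (lci-ness, the codimension of the singular locus, and nonvanishing of the relevant cohomology are all open conditions, and reduce well mod $p$). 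The key point: after such a reduction, $R_\mu/I_\mu$ has the property that it is $F$-split (i.e.\ $F$-pure) for $\mu$ in a dense set of primes, because an lci (more generally, a Gorenstein, or any ring of ``dense $F$-pure type'') singularity of characteristic $0$ with the stated bound on the singular locus is $F$-pure for infinitely many $p$ — this is where I would invoke the lci-ness together with the inequality $i<\codim(\Sing X)$, which guarantees the local cohomology in question is supported away from the bad locus and hence its nonvanishing descends.

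With these reductions in place, the argument runs as follows. For each good prime $\mu$, Theorem D applies to the regular local (graded) ring $R_\mu$ localized at its irrelevant maximal ideal, giving $\emi{i}(I_\mu)>0$: there is a constant $c_\mu>0$ and $N_\mu$ with $\lambda\big(\HH{i}{\fm_\mu}{R_\mu/I_\mu^{\,n}}\big)\ge c_\mu n^d$ for $n\ge N_\mu$. The remaining task is to turn a positive lower bound ``for many $p$'' into a positive lower bound in characteristic $0$. For this I would use that length is computed by a finite free resolution whose ranks and graded Betti numbers are constructible functions on $\Spec A$; thus the graded Hilbert function of $\HH{i}{\fm}{R/I^n}$ specializes: for each fixed $n$ in the (eventually) finite-length range, $\lambda\big(\HH{i}{\fm}{R/I^n}\big)=\lambda\big(\HH{i}{\fm_\mu}{R_\mu/I_\mu^{\,n}}\big)$ for $\mu$ in a dense open subset depending on $n$ (generic flatness plus semicontinuity, and finite length forces equality on the open locus). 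Since Theorem C already gives $\ep{i}(I)<\infty$ in the lci range (via Corollary \ref{lcilimsup}), the sequence $\lambda(\HH{i}{\fm}{R/I^n})/n^d$ is bounded; combining the uniform-in-$n$ specialization identity with the uniform positive lower bound from Theorem D along a single well-chosen prime $\mu$ yields $\liminf_n \lambda(\HH{i}{\fm}{R/I^n})/n^d \ge c_\mu>0$, i.e.\ $\emi{i}(I)>0$.

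The main obstacle I anticipate is the \emph{uniformity} in the reduction mod $p$: I need a single prime $\mu$ (or a cofinal family) for which (a) $R_\mu/I_\mu$ is $F$-pure, (b) the specialization identity $\lambda(\HH{i}{\fm}{R/I^n})=\lambda(\HH{i}{\fm_\mu}{R_\mu/I_\mu^{\,n}})$ holds for \emph{all} $n\gg0$ simultaneously (not just for each $n$ separately), and (c) the constant $c_\mu$ in Theorem D does not degenerate. Point (b) is the delicate one, since the ``bad locus'' in $\Spec A$ a priori grows with $n$; the resolution is to note that, by Theorem A's quasi-polynomial behavior (or rather its analogue after Gröbner degeneration in Section \ref{lsup}), only finitely many ``shapes'' of the relevant modules occur, so the constructible bad loci stabilize and their union avoids a dense open set. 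For (a) and (c) I would lean on the lci hypothesis: for lci rings, $F$-purity of the reduction is controlled by a single discriminant-type condition, and the displayed $F$-full lower bound in Theorem D is, by inspection of its proof, uniform once $R_\mu/I_\mu$ is $F$-split and the nonvanishing degree of $\HH{d-i}{I_\mu}{R_\mu}$ is pinned down — both of which hold on a dense open of $\Spec A$ by the openness discussed above.
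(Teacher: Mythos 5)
Your plan — reduce to positive characteristic and invoke Theorem D — is a genuinely different route from the paper's, and unfortunately it contains a gap that I don't see how to repair. The paper's proof is entirely characteristic-free and direct: it lower-bounds $\lambda(\HH{i}{\fm}{R/I^n})$ by the dimensions in degrees $0,\dots,n-d+h-2$, converts these by graded local duality into graded pieces of $\Ext_R^{d-i}(R/I^n,R)$, applies the stabilization result \cite[2.18]{BBLSZ} to identify those graded pieces with the corresponding pieces of $\HH{d-i}{I}{R}$, and then uses the lci hypothesis to conclude $\HH{d-i}{I}{R}$ is supported at $\fm$ and hence injective, so $\HH{d-i}{I}{R}\cong E_R(k)(d)^{\oplus N}$ with $N>0$; summing the dimensions of $E_R(k)$ gives the lower bound $Ne(R)=N>0$. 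No reduction mod $p$, no $F$-purity, no spreading out.

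The central flaw in your argument is the claim that if $X=\Proj R/I$ is lci (with the bound $i<\codim(\Sing X)$), then the reduction $R_\mu/I_\mu$ is $F$-pure for a dense set of primes $\mu$. This is false. $F$-purity is a condition on the affine cone $R/I$, whose vertex is always singular, and being lci (even smooth) at the level of $X$ does not give anything close to dense $F$-pure type of the cone. A concrete counterexample: let $f\in R=k[x_0,\dots,x_3]$ be a generic form of degree $5$. Then $X=\Proj R/(f)$ is a smooth quintic surface, so lci with $\codim(\Sing X)=\infty$, but the reduction of $R/(f)$ modulo any prime $p$ satisfies $f^{p-1}\in\fm^{[p]}$ on degree grounds (since $\deg f^{p-1}=5(p-1)>4(p-1)$), so by Fedder's criterion $R/(f)$ is \emph{never} $F$-pure in characteristic $p$. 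More structurally, $F$-pure type forces log canonicity of the cone singularity (Hara--Watanabe), and lci/Gorenstein-ness does not; the codimension bound on $\Sing X$ is irrelevant to the vertex. So Theorem D has no fiber to which it applies, and the whole reduction collapses. There are also secondary issues — $\HH{d-i}{I}{R}\neq 0$ is \emph{not} equivalent by local duality to $\HH{i}{\fm}{R/I}\neq 0$ (the former is a direct limit of $\Ext^{d-i}(R/I^n,R)$, and the maps in the limit can kill classes), and the simultaneous-in-$n$ specialization of lengths is not supplied by Theorem A or Gröbner degeneration, which are unrelated to reduction mod $p$ — but the $F$-purity claim is the decisive one.
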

\vspace{3mm}

In the last Section \ref{openq}, we discuss various works in the literature on related topics: $\varepsilon$-multiplicity, graded families of ideals, regularity of powers, and volume of convex bodies.  We also list some intriguing open questions.

\begin{acknowledgement}
We are grateful to Bhargav Bhatt, Dale Cutkosky, Linquan Ma, Anurag Singh, Kevin Woods, and Wenliang Zhang for many very helpful discussions. We are especially grateful to Robert Lazarsfeld for providing us with the proof of Proposition \ref{Rob}. We also wish to thank the referee for her or his helpful corrections. As with most young commutative algebraists, our interests in local cohomology were inspired by the influential work of Gennady Lyubeznik, and it is a pleasure to dedicate this paper to him.  The first author is partially  supported by NSA grant H98230-16-1-001. 

\end{acknowledgement}
\vspace{2mm}

\section{Preliminary results on the general  case}\label{prelim}

In this section we establish some general results on when $\lambda(\HH{i}{\fm}{R/I^n})<\infty$ for $n\gg 0$. We also give a connection between depth conditions on the Rees algebra and the growth rate of the length of local cohomology.  We first recall some familiar  notations.

\begin{Notation}\label{assum}
Let $(R,\fm)$ be a formally equidimensional local ring and a homomorphic image of a regular ring. Let $d=\dim(R)$. Let $I$ be an $R$-ideal, $\R:=\R(I)=R[It]$  the {\it Rees algebra} of $I$. 
Set $\ell:=\ell(I)=\dim \R/\fm\R$, the {\it analytic spread} of $I$. 
\end{Notation}

We now recall the following classical result.  

\begin{thm}[Grothendieck's Finiteness Theorem, {\cite[9.5.2]{BrSh}}]\label{GFT}
Let $R$ and $I$ be as in Notation  \ref{assum}. If  $M$  is a finitely generated $R$-module, then the least $i\in \NN$ for which $\HH{i}{I}{M}$ is not finitely generated is 
$$\min \{\depth M_\fp+\height(I+\fp)/\fp\mid \fp\in \Supp M\setminus V(I)\}.$$
\end{thm}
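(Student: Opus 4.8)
This is Grothendieck's finiteness theorem, and I would reconstruct its proof in three steps. Write $t_I(M)$ for the number on the right and $f_I(M):=\inf\{i\in\NN:\HH{i}{I}{M}\text{ is not finitely generated}\}$, with the convention $\inf\emptyset=\infty$; the assertion is $f_I(M)=t_I(M)$. \emph{Step 1 (reduce to $R$ complete).} Base change along the faithfully flat map $R\to\widehat R$: local cohomology commutes with it, and a module over $R$ is finitely generated exactly when its completion is finitely generated over $\widehat R$, so $f_I(M)=f_{I\widehat R}(\widehat M)$. The right-hand side is also preserved: because $R$ is formally equidimensional, the fibres of $\Spec\widehat R\to\Spec R$ are equidimensional, so the heights $\height((I+\fp)/\fp)$ are controlled by the dimension formula, while $\depth$ is preserved along the flat local maps $R_\fp\to\widehat R_{\mathfrak q}$ with regular fibres. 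Hence we may assume $R$ is complete local, so that, by Cohen's structure theorem, $R$ is a homomorphic image of a regular local ring and carries a normalized dualizing complex $D^\bullet$.

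\emph{Step 2 ($f_I(M)\ge t_I(M)$).} This amounts to: $\HH{i}{I}{M}$ is finitely generated for every $i<t_I(M)$, and I would argue it by induction on $t_I(M)$. Replacing $M$ by $M/\HH{0}{I}{M}$ is harmless for $\HH{i}{I}{-}$ with $i\ge1$ and changes neither $t_I(M)$ nor the localizations at primes off $V(I)$, so we may assume $I$ contains an $M$-regular element $x$. The case $i=1$ is immediate from the ideal-transform sequence $0\to\HH{0}{I}{M}\to M\to D_I(M)\to\HH{1}{I}{M}\to0$, with $D_I(M)=\varinjlim_n\cHom{R}{I^n}{M}$: the hypothesis $t_I(M)\ge2$ forces $D_I(M)$, hence $\HH{1}{I}{M}$, to be finitely generated. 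For $i\ge2$ one replaces $M$ by $D_I(M)$ --- which is finitely generated, unchanged off $V(I)$ (so $t_I$ is unchanged), with $\HH{0}{I}{D_I(M)}=\HH{1}{I}{D_I(M)}=0$ and $\HH{i}{I}{D_I(M)}\cong\HH{i}{I}{M}$ for $i\ge2$ --- and runs the long exact sequences of $0\to M\xrightarrow{x}M\to M/xM\to0$ against the inductive hypothesis for $M/xM$, whose $t_I$ drops by at most one. The one delicate point is that finiteness of the kernel and cokernel of multiplication by $x$ on $\HH{i}{I}{M}$ in this range does lift to $\HH{i}{I}{M}$ itself; this uses an extra finiteness property these modules acquire once all lower local cohomologies are finitely generated.

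\emph{Step 3 ($f_I(M)\le t_I(M)$).} Given Step 2, this says that $\HH{t_I(M)}{I}{M}$ is \emph{not} finitely generated. Choose $\fp\in\Supp M\setminus V(I)$ realizing the minimum, put $s=\depth M_\fp$ and $c=\height((I+\fp)/\fp)$ (so $s+c=t_I(M)$), and fix a saturated chain $\fp=\fp_0\subsetneq\fp_1\subsetneq\cdots\subsetneq\fp_c=\mathfrak q$ with $\mathfrak q$ minimal over $I+\fp$. The idea: $\HH{s}{\fp R_\fp}{M_\fp}\ne0$ by the definition of depth, and this non-vanishing should be transported up the chain --- at the $j$-th step, localize at $\fp_{j+1}$ and combine the composition spectral sequence of the torsion functors $\Gamma_{\fp_j}$ and $\Gamma_{\fp_{j+1}}$ with local duality against $D^\bullet_{R_{\fp_{j+1}}}$ --- so that after $c$ steps a non-finitely-generated contribution lands in $\HH{s+c}{I}{M}$. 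Rigorously this is the local--global principle for the finiteness dimension together with the local-duality computation of that invariant over the rings $R_{\fp_j}$, and it is precisely here that ``$R$ a homomorphic image of a regular (hence Gorenstein) ring'' is used.

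\emph{The main obstacle.} The crux is Step 3: actually producing a non-finitely-generated $\HH{t_I(M)}{I}{M}$, rather than merely bounding where finiteness can first break down. The low-degree finiteness of Step 2 is essentially formal once the ideal transform is available, but Step 3 needs the full strength of Faltings' Annihilator Theorem --- the local--global principle for finiteness of local cohomology, together with the computation, via the dualizing complex, of the finiteness dimension of $M_\fp$ over $R_\fp$ in terms of $\depth M_\fp$ and the codimension of $V(I)$ in $\Spec(R/\fp)$.
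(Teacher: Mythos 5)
The paper does not actually prove this theorem: it is stated as a recalled classical result with a citation to Brodmann--Sharp, so the comparison is with the standard treatment there rather than with an in-paper argument. Your overall architecture (reduce, prove the two inequalities separately, with Faltings' Annihilator Theorem as the crux) is the right one, but you have inverted which direction is hard. Writing $\lambda_I(M)$ for the minimum on the right-hand side and $f_I(M)$ for the finiteness dimension, the inequality $f_I(M)\le\lambda_I(M)$ --- your Step 3, producing a non-finitely-generated $\HH{i}{I}{M}$ in some degree $i\le\lambda_I(M)$ --- holds unconditionally for any Noetherian $R$ and finitely generated $M$, by a localization argument (Brodmann--Sharp 9.3.5); it uses neither the dualizing complex nor the hypothesis that $R$ is a homomorphic image of a regular ring. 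The content of Faltings' Annihilator Theorem is the converse inequality $f_I(M)\ge\lambda_I(M)$, i.e.\ finiteness of $\HH{i}{I}{M}$ for $i<\lambda_I(M)$ --- your Step 2, which you describe as ``essentially formal once the ideal transform is available.''

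It is not formal, and the ``delicate point'' you flag is a genuine gap rather than a technical wrinkle. For an $I$-torsion module $N$ and $x\in I$, finite generation of both $(0:_N x)$ and $N/xN$ does not imply finite generation of $N$: take $R=k[x]$ and $N=k[x,x^{-1}]/k[x]$, so that $N$ is $x$-torsion, $(0:_N x)$ is one-dimensional, and $xN=N$ gives $N/xN=0$, yet $N$ is not finitely generated. So the long-exact-sequence induction on $M/xM$ cannot close on its own; closing it is exactly where the dualizing complex and the hypothesis on $R$ enter (through the annihilator computation), and it is why the theorem fails over general Noetherian local rings. As a minor further point, the passage to $\widehat{R}$ in your Step 1 is unnecessary in the Brodmann--Sharp treatment, since $R$ is already assumed to be a homomorphic image of a regular ring; the formally equidimensional hypothesis in the paper's Notation 2.1 is carried for other purposes, not for this theorem.
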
 

In the next proposition we show that the if the $\limsup$ of $\{\frac{\lambda(\HH{i}{\fm}{I^n})}{n^d}\}_{n\gs 0}$ is positive then the ring $\HH{i}{\fm\R}{\R}$ cannot be Noetherian. This sequence is related to  $\{\frac{\lambda(\HH{i-1}{\fm}{R/I^n})}{n^d}\}_{n\gs 0}$. Indeed if $\depth R>i$, then from the long exact sequence of local cohomology we obtain $\HH{i-1}{\fm}{R/I^n})\cong \HH{i}{\fm}{I^n}$ for every $n$.


A Noetherian ring $S$ is said to satisfy {\it Serre's condition} $S_t$, for $t\in \NN$ if $\depth S_\fp\gs \min\{\height \fp, t\}$ for every $\fp\in \Spec (S)$.

\begin{lemma}\label{Noeth}
Let $R$ and $I$ be as in Notation  \ref{assum}. If $\R$ satisfies condition $S_{d-\ell}$, then the modules $\HH{i}{\fm\R}{\R}$ are Noetherian for $0\ls i\ls d-\ell$.
\end{lemma}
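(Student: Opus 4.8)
The plan is to prove this by induction on $i$, using Grothendieck's Finiteness Theorem (Theorem \ref{GFT}) applied to the Rees algebra $\R$ with respect to the ideal $\fm\R$. The key point is that $\HH{i}{\fm\R}{\R}$ is finitely generated over $\R$ for all $i < j$, where $j$ is the number computed in Theorem \ref{GFT}; so it suffices to show that this threshold $j$ is at least $d-\ell+1$. Thus I need to verify that
$$
\min \{\depth \R_\P + \height((\fm\R + \P)/\P) \mid \P \in \Supp \R \setminus V(\fm\R)\} \gs d-\ell+1.
$$

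First I would fix a prime $\P \in \Spec(\R) \setminus V(\fm\R)$ and analyze the two summands. Set $\P_0 = \P \cap R$; then $\P_0 \ne \fm$ since $\fm\R \not\subseteq \P$ would be violated otherwise — more precisely, $\fm \not\subseteq \P_0$, so $\height \P_0 \ls d-1$. The quantity $\height((\fm\R+\P)/\P)$ is the dimension of the fiber $\R/\fm\R$ localized appropriately; using that $\dim \R/\fm\R = \ell$ and that $\fm\R + \P$ has height at most $\ell$ in $\R/\P$, I get a bound $\height((\fm\R+\P)/\P) \ls \ell$. For the depth term, I would use the Serre condition $S_{d-\ell}$ on $\R$: this gives $\depth \R_\P \gs \min\{\height \P, d-\ell\}$. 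Combining, if $\height \P \gs d-\ell$ then $\depth \R_\P + \height((\fm\R+\P)/\P) \gs (d-\ell) + \text{(something} \gs 1)$, and I must check the fiber contribution is at least $1$; if $\height \P < d-\ell$ then I use that $\depth \R_\P = \height \P$ and that $\height \P + \height((\fm\R+\P)/\P) \gs \height(\fm\R+\P) \gs$ a suitable bound, since $\R$ is equidimensional (being a Rees algebra of an ideal in a formally equidimensional ring, $\R$ is equidimensional of dimension $d+1$, so $\height \P + \dim \R/\P = d+1$). Here $\dim \R/\P \ls \dim \R/\fm\R + \height((\fm\R+\P)/\P) \ls \ell + \height((\fm\R+\P)/\P)$ when $\P \supseteq$ is compatible — actually the cleanest route is: $\dim \R/\P = \dim(\R/\P) \ls \ell + \height((\fm\R+\P)/\P)$ is false in general, so instead I bound via $\height \P \gs (d+1) - \dim \R/\P$ and $\dim \R/\P \ls \dim R/\P_0 + 1 \ls (d - \height \P_0) + 1$, together with the relation between $\height((\fm\R+\P)/\P)$ and the fiber over $R/\P_0$.

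The main obstacle I anticipate is pinning down the inequality $\height((\fm\R+\P)/\P) \gs 1$ (equivalently, that $\fm\R$ is not contained in $\P$ forces the fiber cone direction to have positive height over $\R/\P$) in the case $\height \P \gs d-\ell$, and more generally controlling the interaction between $\height \P$, $\dim \R/\P$, and the analytic spread $\ell$. I would handle this by a careful case analysis on whether $\P \cap R = \fm$ is possible (it is not, by the hypothesis $\P \notin V(\fm\R)$, since $\fm \subseteq \P \cap R$ would give $\fm\R \subseteq \P$), and then use that for $\P$ with $\fm \not\subseteq \P\cap R$, the image of $\fm\R$ in $\R/\P$ is a nonzero ideal, hence of height $\gs 1$. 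Once the two summands are bounded appropriately in each case, the minimum is seen to be $\gs d-\ell+1$, and Theorem \ref{GFT} finishes the proof. I would also remark that the equidimensionality and catenary hypotheses on $R$ (hence on $\R$) are exactly what make the height/dimension bookkeeping work without error terms.
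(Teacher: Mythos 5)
Your overall plan is the same as the paper's: apply Grothendieck's Finiteness Theorem to $\R$ with respect to $\fm\R$, use the Serre condition $S_{d-\ell}$ to bound $\depth \R_\fp$ from below, and check that for every prime $\fp \in \Spec\R \setminus V(\fm\R)$ the sum $\depth \R_\fp + \height\bigl((\fm\R+\fp)/\fp\bigr)$ is at least $d-\ell+1$. Your case $\height\fp \gs d-\ell$ is handled correctly: $S_{d-\ell}$ gives depth $\gs d-\ell$, and $\fm\R \not\subseteq \fp$ forces the height of $(\fm\R+\fp)/\fp$ to be at least $1$.

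However, your case $\height\fp < d-\ell$ is left unresolved — you circle around ``a suitable bound'' for $\height(\fm\R+\fp)$ and dimensions of fibers over $R/\P_0$, flag the ``cleanest route'' as false, and never close the argument. The one fact that collapses the whole case analysis and that you never crisply state is: since $\R$ is equidimensional of dimension $d+1$ and catenary (being a finitely generated algebra over the formally equidimensional, image-of-regular ring $R$), and $\dim \R/\fm\R = \ell$, one has $\height \fm\R = d+1-\ell$. From this, for any $\fp$ with $h = \height\fp$, every minimal prime $\mathfrak q$ of $\fm\R + \fp$ satisfies $\height \mathfrak q \gs d+1-\ell$, and catenarity plus equidimensionality give $\height(\mathfrak q/\fp) = \height\mathfrak q - h \gs d+1-\ell-h$; combined with the positivity you already noted, this yields $\height\bigl((\fm\R+\fp)/\fp\bigr) \gs \max\{d+1-\ell-h, 1\}$ in one line, and then $\min\{h,d-\ell\} + \max\{d+1-\ell-h,1\} \gs d-\ell+1$ holds in both cases without any detour through $\P_0 = \fp \cap R$ or fiber dimensions. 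The excursion into $\dim R/\P_0$ is a red herring: the bookkeeping can and should be done entirely inside $\R$. Also note that your early inequality $\height\bigl((\fm\R+\P)/\P\bigr) \ls \ell$ points in the wrong direction for what you need (a lower bound) and should simply be dropped.
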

\begin{proof}
Let $\fp\in \Spec \R\setminus V(\fm\R)$ and let $h=\height \fp$. Since $\height \fm\R= d+1-\ell$, we have $\height (\fm\R+\fp)/\fp\gs \max\{ d+1-\ell-h, 1\}$. On the other hand, by assumption we have $\depth \R_{\fp}\gs\min\{h, d-\ell\}$. The result now follows by Theorem \ref{GFT}. 
\end{proof}


\begin{prop}\label{H}
Let $R$ and $I$ be as in Notation  \ref{assum}.
Fix $i\gs 1$ and consider the following statements:
\begin{enumerate}
\item $\limsup_{n\rightarrow \infty} \frac{\lambda(\HH{i}{\fm}{I^n})}{n^d} > 0$,
\item $\HH{i}{\fm\R}{\R}$ is not Noetherian,
\item $\ell \gs d-i+1 $, provided $\R$ satisfies $S_{i}$.
\end{enumerate}
Then $(1)\Rightarrow (2)\Rightarrow (3)$.

\noindent Moreover,  if $\HH{i}{\fm\R}{\R}$ is Noetherian, then the sequence $\{\lambda(\HH{i}{\fm}{I^n})\}_{n\gs 0}$ coincides with a polynomial of degree at most $\ell-1$.
\end{prop}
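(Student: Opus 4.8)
The plan is to prove the three implications in order, then the ``moreover'' clause, using the standard associated-graded/Rees-algebra bigraded machinery for local cohomology of powers. The key external input is Grothendieck's Finiteness Theorem (Theorem \ref{GFT}) applied to $\R$ over the ideal $\fm\R$, together with the fact that $\HH{i}{\fm\R}{\R}$ is a $\ZZ$-graded $\R$-module whose $n$-th graded piece is $\HH{i}{\fm}{I^n}$. This last identification comes from the short exact sequences $0 \to I^{n+1} \to I^n \to I^n/I^{n+1} \to 0$ and the exact sequence of the Rees algebra $0 \to \R_{+} \to \R \to R \to 0$, upon which $\HH{i}{\fm\R}{\R}$ carries a natural $\NN$-grading with $\HH{i}{\fm\R}{\R}_n \cong \HH{i}{\fm}{I^n}$; I would cite this standard fact (e.g. as in the literature on local cohomology of Rees algebras) rather than reprove it.

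For $(1)\Rightarrow(2)$: I argue the contrapositive. If $\HH{i}{\fm\R}{\R}$ is Noetherian as an $\R$-module, then in particular it is finitely generated; being $\ZZ$-graded over the standard graded $\R$ (graded by the $t$-degree), each graded component $\HH{i}{\fm\R}{\R}_n \cong \HH{i}{\fm}{I^n}$ has finite length (since $\HH{i}{\fm\R}{\R}$ is $\fm\R$-torsion, hence $\fm$-torsion in each degree, and a finitely generated $\fm\R$-torsion module has finite-length graded pieces), and moreover the Hilbert function $n \mapsto \lambda(\HH{i}{\fm}{I^n})$ is eventually a polynomial of degree at most $\dim \operatorname{Supp}\HH{i}{\fm\R}{\R} - 1$. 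Since $\HH{i}{\fm\R}{\R}$ is annihilated by a power of $\fm\R$, its support lies in $V(\fm\R)$, so its dimension is at most $\dim \R/\fm\R = \ell$; hence the polynomial has degree at most $\ell - 1 < d$ (as $\ell \le d$), so the $\limsup$ of $\lambda(\HH{i}{\fm}{I^n})/n^d$ is zero. This simultaneously establishes the ``moreover'' clause.

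For $(2)\Rightarrow(3)$: here I use Grothendieck's Finiteness Theorem for the module $M = \R$ over the ideal $\fm\R$. Suppose $\R$ satisfies $S_i$ and, for contradiction, that $\ell \le d - i$. The theorem says the least $j$ for which $\HH{j}{\fm\R}{\R}$ fails to be finitely generated equals $\min\{\depth \R_{\fp} + \height(\fm\R+\fp)/\fp : \fp \in \Spec\R \setminus V(\fm\R)\}$. For any such $\fp$ with $\height \fp = h$: using $\height \fm\R = d+1-\ell$ one gets $\height(\fm\R+\fp)/\fp \ge \max\{d+1-\ell-h,\,1\}$ (the ``$1$'' because $\fp \notin V(\fm\R)$ forces a strict inclusion $\fp \subsetneq \fm\R+\fp$ up to radical), while $S_i$ gives $\depth\R_{\fp} \ge \min\{h,i\}$. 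A short case analysis ($h \ge i$ versus $h < i$, using $\ell \le d-i$) shows the sum is always $> i$, so $\HH{j}{\fm\R}{\R}$ is finitely generated — in fact Noetherian, being also $\fm\R$-torsion — for all $j \le i$, contradicting $(2)$. I anticipate this case analysis in $(2)\Rightarrow(3)$ is the main obstacle: one must be careful that the inequality $\depth\R_\fp + \height(\fm\R+\fp)/\fp > i$ genuinely holds in every regime, and in particular that the ``$+1$'' slack from $\fp \notin V(\fm\R)$ is correctly invoked (this is exactly the mechanism already used in the proof of Lemma \ref{Noeth}, so I would model the argument on that lemma, replacing $d-\ell$ by $i$ throughout). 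The remaining implications and the polynomiality statement are then formal consequences of the Noetherian/Hilbert-function considerations above.
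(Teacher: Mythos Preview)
Your proposal is correct and follows essentially the same approach as the paper. The paper's proof is terser: it writes the identification $\HH{i}{\fm\R}{\R}=\HH{i}{\fm}{\R}=\bigoplus_{n\ge 0}\HH{i}{\fm}{I^n}$ directly (via independence of local cohomology on the radical and the $\NN$-grading of $\R$), and for $(2)\Rightarrow(3)$ it simply cites Lemma~\ref{Noeth} rather than rerunning the case analysis --- but you explicitly anticipate this, and your case split ($h\ge i$ versus $h<i$, using $\ell\le d-i$ to get $d+1-\ell-h\ge i+1-h$) is exactly the computation behind that lemma with $d-\ell$ replaced by $i$.
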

\begin{proof}
 Set $H:=\HH{i}{\fm\R}{\R}=\HH{i}{\fm}{\R}=\oplus_{n=0}^\infty \HH{i}{\fm}{I^n}$
 
 (1) $\Rightarrow$ (2):   If $H$ is Noetherian, then $H$ is a finitely generated $\R/(\fm\R)^N$-module for $N\gg 0$. Since $[\R/(\fm\R)^N]_0=R/\fm^N$ is Artinian, the Hilbert function of $H$ asymptotically coincides with a polynomial of degree $\dim(H)-1\ls \ell - 1< d$. Therefore, $\limsup_{n\rightarrow \infty} \frac{\lambda(\HH{i}{\fm}{I^n})}{n^d} =0$. 
 
  (2) $\Rightarrow$ (3): It follows from Lemma \ref{Noeth} that if $\ell \ls d-i$ then $H$ is Noetherian.
\end{proof}
  
\begin{remark}\label{reH}
In the case $i=1$, the three conditions in Proposition \ref{H} are equivalent. Indeed, in \cite[4.4]{UV}) the authors proved that over arbitrary local rings,  the $\limsup$ of $ \{\frac{\lambda(\HH{0}{\fm}{R/I^n})}{n^d}\}_{n\gs 0}$ is nonzero if and only if $\ell=d$. We do not know whether these conditions are equivalent for $i>1$. In Proposition \ref{bipH1pos} we give an example of a family of ideals for which $\R$ satisfies $S_2$, $\ell=d-1$, and $\limsup_{n\rightarrow \infty} \frac{\lambda(\HH{2}{\fm}{I^n})}{n^d} > 0$.
\end{remark}

\vspace{2mm}

A natural question to ask is whether $i>0$ and $\limsup_{n\rightarrow \infty} \frac{\lambda(\HH{i}{\fm}{R/I^n})}{n^d} = 0$ implies $\lambda(\HH{i}{\fm}{R/I^n})=0$ for every $n\gg 0$. In the next example  we show this is not the case. 

\begin{example}\label{nonMaxGrowth}
Let $I_1=(x^4, x^3y, xy^3, y^4, x^2y^2z ,w^2)$  and $I_2=(a,b)$ in the polynomial ring $R=k[x,y,z,w,a,b]$. If $I=I_1\cdot I_2$, then 
$$\lim_{n\rightarrow \infty} \frac{\lambda(\HH{1}{\fm}{R/I^n})}{n^2} \in \QQ_{>0},$$
and therefore $\limsup_{n\rightarrow \infty} \frac{\lambda(\HH{1}{\fm}{R/I^n})}{n^6} = 0$ while $\lambda(\HH{1}{\fm}{R/I^n})> 0$ for infinitely many $n$.
\end{example}
\begin{proof}
For every $n\gs 1$, consider the exact sequence 
$$0\rightarrow R/I^n=R/I_1^n\cap I_2^n\rightarrow R/I_1^n\oplus R/I_2^n\rightarrow R/I_1^n+I_2^n\rightarrow 0.$$
Clearly $\depth R/I_1^n> 1$  and $\depth R/I_2^n>1$, therefore  $\HH{1}{\fm}{R/I^n}=\HH{0}{\fm}{R/I_1^n+I_2^n}$ for every $n$. Let $R_1=k[x,y,z,w]$, $R_2=k[a,b]$, and $\fm_1$, $\fm_2$ their corresponding irrelevant maximal ideals. Since the support of the ideals $I_1$ and $I_2$ are disjoint, we have 
$$\lambda(\HH{1}{\fm}{R/I^n})=\lambda(\HH{0}{\fm_1}{R_1/I_1^n})\cdot \lambda(\HH{0}{\fm_2}{R_2/I_2^n})= \lambda(\HH{0}{\fm_1}{R_1/I_1^n})\cdot \frac{n(n+1)}{2}.$$
Hence, it suffices to show $\lambda(\HH{0}{\fm_1}{R_1/I_1^n})$ is a non-zero constant for $n\gg 0$.  
 
 Set $\G=\oplus_{i\gs 0} I_1^i/I_1^{i+1}$ and $H:=\HH{0}{\fm_1\G}{\G}=\oplus_{i\gs 0} \HH{0}{\fm_1}{I_1^i/I_1^{i+1}}$. Computations by  Macaulay2 \cite{GS} show that $\dim H=1$ and then $\lambda( \HH{0}{\fm_1}{I_1^i/I_1^{i+1}})$ agrees with a constant $C\neq 0$ for  $n\gg 0$. Now, the analytic spread of $I_1$ is $3$, then by \cite[5.4.6]{HS} and \cite[7.58]{Vas} we have $(I_1^n)^{sat}:= (I_1^n:\fm_1^\infty) \subseteq I_1^{n-4}$. Therefore, $\HH{0}{\fm_1}{R_1/I_1^{n}}= (I_1^n)^{sat}\cap I_1^{n-4}/I_1^{n}= \HH{0}{\fm_1}{I_1^{n-4}/I_1^{n}}$ and it follows that the sequence $\{\HH{0}{\fm_1}{R_1/I_1^{n}}\}_{n\gs 0}$ coincides with a polynomial for $n\gg 0$. Finally, from the following inequalities we observe this polynomial must be a non-zero constant
\begin{align*}
C=\lambda( \HH{0}{\fm_1}{I_1^{n-1}/I_1^{n}})\ls \lambda( \HH{0}{\fm_1}{R_1/I_1^{n}})=\lambda( \HH{0}{\fm_1}{I_1^{n-4}/I_1^{n}})\ls \oplus_{i=n-4}^{n-1}\lambda( \HH{0}{\fm_1}{I_1^{i}/I_1^{i+1}})=4C.
\end{align*}

\end{proof}

\subsection{Finite local cohomology}

\

One of the complications that arises if we want to study the asymptotic behavior of $\lambda(\HH{i}{\fm}{R/I^n})$ for $i>0$ is that, unlike the case $i=0$, the modules $\HH{i}{\fm}{R/I^n}$ need not be Noetherian and therefore their lengths may be infinite. In Proposition \ref{anotherGFT} we give sufficient conditions for the related lengths $\lambda(\HH{i}{\fm}{I^n})$ to be finite for $n\gg 0$.

\begin{prop}\label{anotherGFT}
Let $R$ and $I$ be as in Notation \ref{assum} and assume $R$ is  Cohen-Macaulay. Fix $t\in \NN$, then $\lambda(\HH{i}{\fm}{I^n})<\infty$ for every $i\ls t$ and $n\gg 0$ if and only if $$d-t\gs \height \fp-\lim_{n\rightarrow \infty}\depth_{R_\fp}(R/I^n)_\fp \text{ for every }\fp \neq \fm.$$
\end{prop}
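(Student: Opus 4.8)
The plan is to reduce the statement about the vanishing of $\lambda(\HH{i}{\fm}{I^n})$ for all $i\ls t$ and $n\gg0$ to a single application of Grothendieck's Finiteness Theorem \ref{GFT} applied to the graded module $\bigoplus_n I^n$ over $R$, combined with the standard fact that depths of the powers $(R/I^n)_\fp$ stabilize. First I would recall that $\HH{i}{\fm}{I^n}$ is finitely generated (hence of finite length, since it is $\fm$-torsion) exactly when it is Noetherian, and that $\lambda(\HH{i}{\fm}{I^n})<\infty$ for all $i\ls t$ and $n\gg 0$ is equivalent to the statement that, for the Rees module $M:=\bigoplus_{n\gs0} I^n$ viewed over $R$ with the $\fm$-adic (equivalently $\fm\R$-adic) torsion, $\HH{i}{\fm}{M}$ is a finitely generated $\R$-module for $i\ls t$; this passage uses that $\HH{i}{\fm}{M}=\bigoplus_n \HH{i}{\fm}{I^n}$ and that finite generation over $\R$ in each graded piece, uniformly for $n\gg0$, is what ``$<\infty$ for $n\gg0$'' amounts to after noting the lengths are a polynomial for large $n$ once they are finite (cf. the argument in Proposition \ref{H}).

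Next I would invoke Grothendieck's Finiteness Theorem for the module $M$ over $R$ with respect to the ideal $\fm$: the least $i$ for which $\HH{i}{\fm}{M}$ fails to be finitely generated is
\[
\min\{\depth M_\fp + \height(\fm+\fp)/\fp \mid \fp\in\Supp M\setminus V(\fm)\}.
\]
Since $\Supp M = \Supp R = \Spec R$ (as $I^n\neq 0$) and $V(\fm)=\{\fm\}$, the primes $\fp$ ranging here are exactly those with $\fp\neq\fm$, and $\height(\fm+\fp)/\fp = \dim R/\fp = d-\height\fp$ because $R$ is Cohen--Macaulay (hence catenary and equidimensional). So the obstruction index equals $\min_{\fp\neq\fm}\{\depth M_\fp + d - \height\fp\}$, and the desired finiteness for all $i\ls t$ is precisely the inequality $\depth M_\fp + d - \height\fp > t$, i.e. $d-t\gs \height\fp - \depth M_\fp$, for every $\fp\neq\fm$. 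To finish I would identify $\depth M_\fp$ with $\lim_{n\to\infty}\depth_{R_\fp}(R/I^n)_\fp$: localize at $\fp$, use that $M_\fp=\bigoplus_n (I^n)_\fp$ is the Rees module of $I_\fp$, and apply the fact (Brodmann's stability of $\Ass$, or the standard computation of depth of a graded module over the Rees algebra via its graded pieces) that $\depth M_\fp$ — which as a graded $\R(I_\fp)$-module satisfies $\depth M_\fp = \min_n \depth(I^n)_\fp$ attained eventually — coincides with the eventual value $\lim_n \depth_{R_\fp}(I^n)_\fp = \lim_n \depth_{R_\fp}(R/I^n)_\fp$, the last equality from the short exact sequence $0\to I^n\to R\to R/I^n\to 0$ and $R_\fp$ Cohen--Macaulay together with $\depth(R/I^n)_\fp \geq 1$ eventually when this limit is positive (and a separate easy check when it is $0$).

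The main obstacle I anticipate is the bookkeeping in the equivalence ``$\lambda(\HH{i}{\fm}{I^n})<\infty$ for $n\gg0$ $\iff$ $\HH{i}{\fm}{M}$ finitely generated over $\R$'': one direction is immediate (finite generation over $\R$ forces each graded piece to be a finitely generated $R$-module, hence finite length), but the converse requires care, since a priori finiteness of $\lambda(\HH{i}{\fm}{I^n})$ for each large $n$ does not obviously give a finite generating set over $\R$ — one must use that $\HH{i}{\fm}{M}$ is the cohomology of the $\fm$-torsion functor applied to a finitely generated $\R$-module and is therefore graded and, once Artinian in each degree, its being a Noetherian $\R$-module is governed exactly by the Finiteness Theorem, closing the loop. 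The identification of $\depth M_\fp$ with the stable depth of powers is standard but should be cited carefully (Brodmann, or \cite[Section~3]{HS}-type references). Everything else is a direct substitution into Theorem \ref{GFT}.
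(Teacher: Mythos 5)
There is a genuine gap: your plan hinges on applying Theorem~\ref{GFT} to the module $M=\bigoplus_{n\gs 0}I^n$ over $R$, but $M$ is not a finitely generated $R$-module, so the Finiteness Theorem simply does not apply to it as stated. If instead you tried to apply it over $\R=R[It]$ (where $M$ is finitely generated), the minimum would range over primes $\fp\in\Spec\R\setminus V(\fm\R)$ and the depths in question would be $\R_\fp$-depths, which no longer translate cleanly into the inequality $d-t\gs\height\fp-\lim_n\depth(R/I^n)_\fp$ over primes of $R$. Moreover, the equivalence you want between ``$\lambda(\HH{i}{\fm}{I^n})<\infty$ for $n\gg 0$'' and ``$\HH{i}{\fm\R}{\R}$ is a Noetherian $\R$-module'' is precisely the unknown converse of Proposition~\ref{H}$(1)\Rightarrow(2)$; the authors state in Remark~\ref{reH} that they do not know this for $i>1$. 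So you cannot use it as a lemma here. There is also a smaller slip in the last step: if one does view $M_\fp=\bigoplus_n(I^n)_\fp$ as an $R_\fp$-module, its depth is $\min_{n\gs 0}\depth(I^n)_\fp$, not the eventual value $\lim_n\depth(I^n)_\fp$; in particular the $n=0$ term contributes $\depth R_\fp=\height\fp$, and the minimum need not coincide with the stable depth.

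The paper's proof avoids all of this by applying Theorem~\ref{GFT} separately to each finitely generated module $J=I^n$: for fixed $n$, the modules $\HH{i}{\fm}{I^n}$ are finitely generated for $i\ls t$ if and only if $\min\{\depth_{R_\fp}(I^n)_\fp+\dim R/\fp\mid\fp\neq\fm\}\gs t+1$. Then Brodmann's theorem is used to say that $\depth(R/I^n)_\fp$ is constant for $n\gg 0$, and the Cohen--Macaulay hypothesis gives $\depth(I^n)_\fp=\depth(R/I^n)_\fp+1$, so the condition stabilizes to the stated inequality. Your later steps (Brodmann, the short exact sequence $0\to I^n\to R\to R/I^n\to 0$) are the right ingredients, but they should be deployed after applying GFT to each $I^n$, not to a single non-finitely-generated Rees module.
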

\begin{proof}
By Theorem \ref{GFT} we have that for a any ideal $J$, the modules $\HH{i}{\fm}{J}$ are finitely generated for $i\ls t$ if and only if $$\min\{\depth_{R_\fp}J_{\fp}+\dim R/\fp\mid \fp\neq \fm\}\gs t+1.$$
By Brodmann's Theorem (see \cite[Theorem 2]{Br}) the sequence $\depth (R/I^n)_{\fp}$ is constant for $n\gg 0$, and  since $R$ is Cohen-Macaulay we have $\depth I^n_{\fp}=\depth (R/I^n)_{\fp}+1$ for $n\gg 0$. Therefore $\lambda(\HH{i}{\fm}{I^n})<\infty$  for $i\ls t$ and $n\gg 0$ if and only if $\depth_{R_\fp}(R/I^n)_\fp + 1 + d -\height \fp \gs t+1 \text{ for every }\fp \neq \fm$ and $n\gg 0$, which finishes the proof.
\end{proof}

\begin{cor}\label{corH2} With the assumptions of Proposition \ref{anotherGFT}, we have  
\begin{enumerate}
\item[(a)] $\HH{1}{\fm}{I^n}$ is always finitely generated for $n\gg 0$.
\item[(b)] $\HH{2}{\fm}{I^n}$ is finitely generated for $n\gg 0$ if and only if $\Ass (R/I^n)$ does not contain prime ideals of dimension one for $n\gg 0$.
\end{enumerate}
\end{cor}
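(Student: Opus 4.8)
The plan is to derive both statements as immediate consequences of Proposition \ref{anotherGFT}, applied with the appropriate value of $t$.

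For part (a), I take $t=1$ in Proposition \ref{anotherGFT}. The condition to verify is that $d-1\gs \height\fp - \lim_{n\to\infty}\depth_{R_\fp}(R/I^n)_\fp$ for every prime $\fp\neq\fm$. Since $\depth_{R_\fp}(R/I^n)_\fp\gs 0$ always, it suffices to check $d-1\gs \height\fp$, i.e. $\height\fp\ls d-1$, for every $\fp\neq\fm$. But $\fm$ is the unique maximal ideal of height $d$, so every prime $\fp\neq\fm$ has $\height\fp\ls d-1$. Hence the inequality holds, and $\HH{1}{\fm}{I^n}$ is finitely generated for $n\gg 0$; since $R$ is Cohen-Macaulay (local), finitely generated $\fm$-torsion modules have finite length, giving the claim.

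For part (b), I take $t=2$. By Proposition \ref{anotherGFT}, $\HH{2}{\fm}{I^n}$ is finitely generated for $n\gg 0$ if and only if $d-2\gs \height\fp - \lim_{n\to\infty}\depth_{R_\fp}(R/I^n)_\fp$ for every $\fp\neq\fm$. Again every $\fp\neq\fm$ has $\height\fp\ls d-1$, and if $\height\fp\ls d-2$ the inequality holds automatically, so the only primes that can cause failure are those with $\height\fp=d-1$. For such $\fp$, the condition reads $\lim_{n\to\infty}\depth_{R_\fp}(R/I^n)_\fp\gs 1$. Now $\height\fp=d-1$ means $\dim R_\fp=d-1$ (as $R$ is Cohen-Macaulay, hence equidimensional and catenary), equivalently $\dim R/\fp=1$. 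If $\fp\not\supseteq I^n$, then $(R/I^n)_\fp=0$ and there is nothing to check; if $\fp\supseteq I^n$ then $\depth_{R_\fp}(R/I^n)_\fp\gs 1$ for $n\gg 0$ precisely when $\fp\notin\Ass(R/I^n)$ for $n\gg 0$ — using that Brodmann's theorem makes the depth eventually constant and that depth zero at $\fp$ is equivalent to $\fp$ being associated. Thus the finiteness of $\HH{2}{\fm}{I^n}$ for $n\gg 0$ is equivalent to $\Ass(R/I^n)$ containing no dimension-one primes for $n\gg 0$, as asserted.

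I do not anticipate a serious obstacle here: the whole corollary is essentially bookkeeping with heights and the characterization of depth zero via associated primes, once Proposition \ref{anotherGFT} is in hand. The one point requiring a moment's care is the translation in part (b) between ``$\lim_n\depth_{R_\fp}(R/I^n)_\fp\gs 1$'' and ``$\fp\notin\Ass(R/I^n)$ eventually'': one should note that when $\fp$ does not contain $I^n$ the localization vanishes and the depth (by the usual convention, $+\infty$) poses no constraint, while when $\fp\supseteq I^n$ the depth is a nonnegative integer that is zero iff $\fp$ is associated to $R/I^n$; combined with the eventual stability of $\Ass(R/I^n)$ (Brodmann) this gives the stated equivalence cleanly.
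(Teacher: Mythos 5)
Your proof is correct and is exactly the intended argument — the paper states this corollary without proof, treating it as an immediate consequence of Proposition \ref{anotherGFT} applied with $t=1$ and $t=2$, which is precisely what you do. Your care with the edge cases (the convention $\depth 0=+\infty$ when $\fp\not\supseteq I$, the translation $\height\fp=d-1\iff\dim R/\fp=1$ via catenarity/equidimensionality of CM local rings, and the use of Brodmann to pass from ``$\lim\depth\gs 1$'' to ``$\fp\notin\Ass(R/I^n)$ for $n\gg 0$'') is exactly the bookkeeping needed to make the deduction rigorous.
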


\section{Monomial Ideals}\label{smono1}

For the rest of the paper we focus on the situation where $R=k[x_1,\ldots, x_d]$ and $\fm=(x_1,\ldots, x_d)$. In this section, we shall study in detail the case of $I$ being a monomial ideal.  This case is both important for later applications and possesses many interesting combinatorial aspects in its own right.

\subsection{Takayama's formula}

For every $F\subseteq [d]:= \{1,\ldots, d\}$, let $$\pi_F: R\longrightarrow R,$$ defined by $\pi_F(x_i)=1$ if $i\in F$, and $\pi_F(x_i)=x_i$ otherwise. For an $R$-ideal $I$, we write $I_F:=\pi_F(I)$. The following lemma presents some basic properties of the map $\pi_F$.

\begin{lemma}\label{interFs}
Let $I$ and $J$ be two monomial ideals. Then for every $F\subseteq [d]$ we have 
\begin{enumerate}
\item $(I\cap J)_F=I_F\cap J_F.$ 
\item Let $I= \cap_i Q_i$ be a primary decomposition of $I$, then $$I_F=\bigcap_{\supp(\sqrt{Q_i})\cap F=\emptyset}Q_i.$$
\end{enumerate}
\end{lemma}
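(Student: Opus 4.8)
The plan is to prove the two assertions about the map $\pi_F$ by reducing everything to the monomial level, where $\pi_F$ acts by setting the variables indexed by $F$ equal to $1$. First I would observe that for a monomial ideal $I$ with monomial generating set $G(I)$, we have $I_F=\pi_F(I) = (\pi_F(u) : u\in G(I))$, and more usefully, a monomial $m$ lies in $I_F$ if and only if there is a monomial $m'\in I$ with $\pi_F(m')=m$, equivalently $m\cdot (\prod_{i\in F} x_i^{N})\in I$ for $N\gg 0$. So $I_F$ is precisely the \emph{saturation} of $I$ with respect to the ideal $\prod_{i\in F}x_i$ followed by the substitution; concretely, after identifying $R/(x_i-1 : i\in F)$ with the polynomial ring on the remaining variables, $I_F$ is the contraction-expansion of $I$ to that ring, which is the same as $(I : (\prod_{i\in F}x_i)^\infty)$ intersected with the monomials supported off $F$. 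I would state this characterization cleanly at the outset since both parts follow from it.

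For part (1), I would argue that both $(I\cap J)_F$ and $I_F\cap J_F$ are monomial ideals, so it suffices to compare their monomials. Given the characterization above, a monomial $m$ (supported off $F$, say, or in general after renormalizing) lies in $(I\cap J)_F$ iff $m\cdot(\prod_{i\in F}x_i)^N \in I\cap J$ for $N\gg 0$, iff $m\cdot(\prod_{i\in F}x_i)^N\in I$ and $\in J$ for $N\gg 0$ — here I use that for monomial ideals, being in $I$ for large $N$ and in $J$ for large $N$ can be achieved with a common $N$ — iff $m\in I_F$ and $m\in J_F$. That gives the equality. The only point requiring a line of care is that $\pi_F$ applied to a monomial $m'$ depends only on the $F$-complement exponents of $m'$, so the "for $N\gg 0$" quantifier behaves well under intersection.

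For part (2), I would take a primary decomposition $I=\bigcap_i Q_i$ into monomial primary ideals, so each $\sqrt{Q_i}$ is a monomial prime, i.e. generated by a subset of the variables, namely $(x_j : j\in \supp(\sqrt{Q_i}))$. Applying part (1) repeatedly, $I_F = \bigcap_i (Q_i)_F$. Then the heart of the matter is the computation of $(Q_i)_F$ for a single monomial primary ideal $Q$ with $\sqrt{Q}=(x_j: j\in S)$: if $S\cap F\neq\emptyset$, pick $j\in S\cap F$; then some power $x_j^a\in Q$, and $\pi_F(x_j^a)=1$, so $(Q)_F = R$ and $Q$ drops out of the intersection. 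Conversely, if $S\cap F=\emptyset$, then $Q$ involves only variables not in $F$, so $\pi_F$ fixes every generator of $Q$ and $(Q)_F=Q$. Combining, $I_F=\bigcap_{\supp(\sqrt{Q_i})\cap F=\emptyset} Q_i$, as claimed.

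The main obstacle, though it is really a technical rather than conceptual one, is handling the "common $N$" issue in part (1) and being careful that $\pi_F$ is not injective, so that "$m\in I_F$" must be phrased via existence of a preimage or via the saturation description rather than naively pulling back. Once the saturation characterization of $I_F$ is in place, both parts are short. I would also remark that part (2) does not require the primary decomposition to be irredundant, which makes the reduction via part (1) completely mechanical.
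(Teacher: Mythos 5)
Your proof is correct, and since the paper states this lemma without proof there is nothing to compare against; the saturation characterization ($m\in I_F$ if and only if $m\cdot(\prod_{i\in F}x_i)^N\in I$ for $N\gg 0$) is indeed the natural engine for both parts, and the ``common $N$'' worry you raise is harmless precisely because membership is upward-closed in $N$. Two small points to tidy in a final write-up: (a) your opening formulation ``$m\in I_F$ iff there is $m'\in I$ with $\pi_F(m')=m$'' is only right for monomials $m$ not involving the $F$-variables, since $\pi_F(m')$ never involves them while $I_F$ (as an ideal of $R$) contains monomials that do; say ``$\pi_F(m')$ divides $m$'' instead, or just use the saturation phrasing, which is what your arguments in parts (1) and (2) actually rely on. (b) In part (2) you silently use that a monomial $P$-primary ideal $Q$ with $P=(x_j:j\in S)$ has all its minimal generators supported in $S$; this is standard but merits one line: if $x_k^a v$ were a minimal generator with $k\notin S$, $a\ge 1$, and $x_k\nmid v$, then $x_k\notin\sqrt{Q}$ and primaryness would force $v\in Q$, contradicting minimality. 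Neither remark affects the validity of your argument.
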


 For a vector $\fa=(a_1,\ldots, a_d)\in \ZZ^d$, we write $\fx^\fa := x_1^{a_1}\cdots x_d^{a_d}$.   Let $G_\fa=\{i\mid a_i<0\}$. We also set $\fa^+=(a_1^+,\ldots, a_d^+)$, where $a_i^+=a_i$ if $i\not\in G_\fa $ and $a_i^+=0$ otherwise. 
 
 Let $I$ be a monomial ideal and  $\Delta_{\fa}(I)$ be the simplicial complex of all subsets $F$ of $[d]\setminus G_\fa$ such that $\fx^{\fa^+}\not\in I_{F\cup G_\fa}$.  It is a fact that $\Delta_{\fa}(I)$ is a subcomplex of $\Delta(I)$, the simplicial complex for which $\sqrt{I}$ is the Stanley-Reisner ideal (see \cite[1.3]{MT}). 
 
 \begin{thm}[{\cite[Theorem 1]{Tak}}]\label{Takayama}
 For every $\fa\in \ZZ^d$ and $i\gs 0$ we have
 $$\dim_k \HH{i}{\fm}{R/I}_{\fa} =  \dim_k  \tilde{H}_{i-|G_\fa |-1}(\Delta_{\fa}(I), k)$$
 \end{thm}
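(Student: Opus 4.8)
The plan is to compute $\HH{i}{\fm}{R/I}$ with the extended \v Cech complex $\check{C}^{\bullet}$ on the variables $x_1,\dots,x_d$, and to exploit the $\ZZ^d$-grading so that, after fixing a degree $\fa\in\ZZ^d$, the problem reduces to the cohomology of a complex of finite-dimensional $k$-vector spaces. Recall that $\check{C}^{\bullet}$ has $j$-th term $\bigoplus_{|F|=j}R_{x_F}$, where $x_F:=\prod_{i\in F}x_i$ and $F$ runs over the subsets of $[d]$, that the differentials are signed localization maps, and that $\HH{i}{\fm}{R/I}=H^i\bigl(\check{C}^{\bullet}\otimes_R R/I\bigr)$; everything here is $\ZZ^d$-graded, so we may work one degree $\fa$ at a time.

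First I would identify the degree-$\fa$ piece of each localization $(R/I)_{x_F}=R_{x_F}/IR_{x_F}$. Since the $x_i$ with $i\in F$ become units, $IR_{x_F}=I_FR_{x_F}$ with $I_F=\pi_F(I)$, and $R_{x_F}$ is the monomial ring in which exactly the exponents indexed by $F$ are allowed to be negative; hence $(R_{x_F})_{\fa}\neq 0$ iff $G_\fa\subseteq F$, in which case it is the line $k\cdot\fx^{\fa}$. As multiplication by the units $x_i^{\pm1}$, $i\in F$, is a degree-shifting isomorphism, when $G_\fa\subseteq F$ one has $\fx^{\fa}\in I_FR_{x_F}$ if and only if $\fx^{\fa^+}\in I_{F}$. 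Writing $F=F'\cup G_\fa$ with $F'\subseteq[d]\setminus G_\fa$, this says precisely that $\bigl((R/I)_{x_F}\bigr)_{\fa}$ equals $k$ when $F'\in\Delta_\fa(I)$ and $0$ otherwise. (Lemma \ref{interFs} is convenient for the monomial bookkeeping here, and the fact that $\Delta_\fa(I)$ is genuinely a simplicial complex, a subcomplex of $\Delta(I)$, is recalled from \cite[1.3]{MT}.)

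Next I would reindex and compare with a simplicial cochain complex. With $F'=F\setminus G_\fa$, the $\fa$-strand of $\check{C}^{\bullet}\otimes_R R/I$ in cohomological degree $j$ is $\bigoplus_{F'\in\Delta_\fa(I),\,|F'|=j-|G_\fa|}k\cdot e_{F'}$, and the \v Cech differential acts on these basis vectors through the alternating-sign face inclusions $F'\mapsto F'\cup\{i\}$. After rescaling each $e_{F'}$ by a unit depending only on $F'$ and $G_\fa$ — which absorbs the extra sign $(-1)^{\#\{\,m\in G_\fa:\,m<i\,\}}$ carried by the \v Cech map relative to the simplicial coboundary — this strand becomes, as a complex, the augmented (reduced) simplicial cochain complex $\tilde{C}^{\,\bullet-|G_\fa|-1}(\Delta_\fa(I);k)$, with the empty face $F'=\emptyset$ supplying the augmentation term in cohomological degree $|G_\fa|$. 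Passing to cohomology yields $\HH{i}{\fm}{R/I}_{\fa}\cong\tilde{H}^{\,i-|G_\fa|-1}(\Delta_\fa(I);k)$, and since reduced simplicial homology and cohomology with coefficients in a field agree in dimension in each degree, $\dim_k\HH{i}{\fm}{R/I}_{\fa}=\dim_k\tilde{H}_{\,i-|G_\fa|-1}(\Delta_\fa(I);k)$, as claimed.

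The main obstacle is the careful combinatorial bookkeeping in the last two steps: on one hand, making the identification $IR_{x_F}=I_FR_{x_F}$ and the translation of ``$\fx^{\fa}\notin IR_{x_F}$'' into ``$\fx^{\fa^+}\notin I_{F'\cup G_\fa}$'' uniform over all $F$; on the other hand, reconciling the signs of the \v Cech coboundary with those of the simplicial coboundary so that one obtains an honest isomorphism of complexes rather than merely a degreewise isomorphism. Everything else is a formal consequence of the \v Cech description of local cohomology and of field-coefficient $(\mathrm{co})$homology duality.
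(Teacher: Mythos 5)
Your proof is correct, but note that the paper does not prove this statement: it simply cites \cite[Theorem~1]{Tak}, so there is no in-paper argument to compare against. What you have reconstructed is the standard \v Cech-complex proof of Takayama's formula (and, in the squarefree case, of Hochster's formula), which is the argument of the cited reference. The key steps all hold up: $(R_{x_F})_{\fa}\neq 0$ iff $G_\fa\subseteq F$; $IR_{x_F}=I_FR_{x_F}$ because the generators of $I$ and of $I_F=\pi_F(I)$ differ by units in $R_{x_F}$; for $G_\fa\subseteq F$ the unit $\prod_{i\in G_\fa}x_i^{a_i}$ reduces membership of $\fx^\fa$ in $I_FR_{x_F}$ to membership of $\fx^{\fa^+}$, and since the generators of $I_F$ avoid the variables in $F$, contraction from $R_{x_F}$ back to $R$ on nonnegative multidegrees gives $\fx^{\fa^+}\in I_FR_{x_F}$ iff $\fx^{\fa^+}\in I_F$; the sign discrepancy $(-1)^{\#\{m\in G_\fa\,:\,m<i\}}$ is indeed absorbed by rescaling $e_{F'}$ by $\lambda_{F'}=(-1)^{\sum_{i\in F'}\#\{m\in G_\fa\,:\,m<i\}}$, which satisfies the needed cocycle relation $\lambda_{F'\cup\{i\}}=\lambda_{F'}(-1)^{\#\{m\in G_\fa\,:\,m<i\}}$; and the shift by $|G_\fa|+1$ and the passage from reduced cohomology to reduced homology over a field are both standard.

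One small bonus of your route that is worth flagging: the paper remarks that Takayama's original statement carries extra hypotheses on $\fa$, and that the formula is valid unconditionally because the right-hand side vanishes when those hypotheses fail. Your argument proves the unconditional version directly for every $\fa\in\ZZ^d$, so it sidesteps that bookkeeping rather than having to patch it afterward.
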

 
 It is important to remark that in \cite{Tak}, Theorem \ref{Takayama} is stated with extra conditions for $\fa$, however following its proof one can observe that if any of those conditions is violated then $\dim_k \tilde{H}_{i-|G_\fa |-1}(\Delta_{\fa}(I), k)=0$. For more information see \cite{Tak} and \cite{MT}.
 
 \begin{remark}\label{otherInt}
 Let $R_{\fa+}=k[x_i\mid i\not\in G_\fa]$, $\fm_{\fa+}$ be the irrelevant ideal of $R_{\fa+}$, and $I_{\fa+}=I_{G_\fa}\cap R_{\fa+}$. Then it follows directly from Theorem \ref{Takayama} that 
 $$\dim_k \HH{i}{\fm}{R/I}_{\fa} =  \dim_k \HH{i-|G_\fa|}{\fm_{\fa+}}{R_{\fa+}/I_{\fa+}}_{\fa^+},$$
 where we are identifying ${\fa^+}$ with the corresponding vector in $\NN^{d-|G_\fa|}$.
 \end{remark}
 


We say that  $(I_n)_{n\gs 0}$ is a {\it graded family} of ideals if $I_0=R$ and for every $n,m\gs 0$ we have $I_nI_m\subseteq I_{m+n}$. We say that the graded family $(I_n)_{n\gs 0}$  is {\it Noetherian} if $\R(t)=\oplus_{n\gs 0} I_n t^n$ is a finitely generated algebra. 

\begin{example}
Let $I$ be a monomial ideal, the following are examples of Noetherian graded families of monomial ideals.
\begin{enumerate}
\item The regular powers $(I^n)_{n\gs 0}$.
\item The  saturations $(\tilde{I^n})_{n\gs 0}$, where $\tilde{I^n}=I^n:\fm^{\infty}$.
\item The symbolic powers $(I^{(n)})_{n\gs 0}$.
\item More generally, for any monomial ideal $J$, the ideals $(I^n:J^{\infty})_{n\gs 0}$ (see \cite[3.2]{HHT}).
\item The integral closures $(\overline{I^n})_{n\gs 0}$ (see \cite[7.58]{Vas}).
\end{enumerate}
\end{example}

 Let $n$ be a variable. A {\it quasi-polynomial} on $n$ is a function $f:\NN\rightarrow \QQ$ for which there exist $\pi \in \NN$ and polynomials $p_i(n)\in \QQ[n]$ for $i=0,\ldots, \pi-1$ such that $f(n)=p_i(n)$ whenever $n\equiv i\, (\text{mod } \pi)$.

\begin{prop}{\label{quasi-pol_GenM}}
Let $(I_{1,n})_{n\gs 0},\ldots, (I_{r,n})_{n\gs 0}, (J_{1,n})_{n\gs 0},\ldots,(J_{s,n})_{n\gs 0}$ be Noetherian graded families of monomial ideals. For every $n\gs 1$ consider the set
  $$S_n = \{\fa\in\NN^d \mid \fx^\fa \in I_{i,n} \text{ for every }1\ls i\ls r, \text{ and }\fx^\fa \not\in J_{j,n} \text{ for every } 1\ls j\ls s\}.$$
Let $f(n)=|S_n|$ for every $n\gs 1$ and assume $f(n)<\infty$ for $n\gg 0$, then $f(n)$ agrees with a quasi-polynomial  of degree at most $d$ for $n\gg0$.
\end{prop}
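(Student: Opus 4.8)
The plan is to reduce the counting of the sets $S_n$ to a problem about Presburger arithmetic and then invoke the known structure theorems for Presburger counting functions. The key observation is that for a Noetherian graded family $(I_{i,n})_{n \gs 0}$ of monomial ideals, the Rees algebra $\R_i = \oplus_n I_{i,n} t^n$ is finitely generated, so there is a finite set of generators $\fx^{\fb_{i,1}} t^{c_{i,1}}, \ldots, \fx^{\fb_{i,m_i}} t^{c_{i,m_i}}$. Membership $\fx^\fa \in I_{i,n}$ can then be expressed as: there exist nonnegative integers $\lambda_1, \ldots, \lambda_{m_i}$ and a vector $\fa' \in \NN^d$ with $\sum_\ell \lambda_\ell c_{i,\ell} = n$, $\sum_\ell \lambda_\ell \fb_{i,\ell} + \fa' = \fa$ (componentwise). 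Since existential quantification over $\NN$ is allowed in Presburger arithmetic, the condition ``$\fx^\fa \in I_{i,n}$'' is a Presburger formula in the variables $(\fa, n)$; the conjunction over $i = 1, \ldots, r$ is still Presburger. For the complement conditions ``$\fx^\fa \not\in J_{j,n}$'', we use that the negation of a Presburger formula is again Presburger (Presburger arithmetic admits quantifier elimination and is closed under Boolean operations), so the full condition defining $S_n$ is a Presburger formula $\Phi(\fa, n)$.

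Next I would set $f(n) = |\{\fa \in \NN^d : \Phi(\fa, n)\}|$ and appeal to the Presburger counting theory: by the work of Woods (and the classical Ehrhart-type results extended to Presburger-definable families, cf. the references to Presburger arithmetic in the paper), if $f(n)$ is finite for all large $n$, then $f(n)$ agrees with a quasi-polynomial in $n$ for $n \gg 0$. This is precisely the statement that the counting function of a parametric Presburger family, when finite, is eventually quasi-polynomial. The degree bound comes from the ambient dimension: each $\fa$ ranges over $\NN^d$, so the counting function of any Presburger-definable subset of $\NN^d$ (parametrized linearly by $n$) has degree at most $d$ as a quasi-polynomial — this follows because the cardinality is bounded above by the number of lattice points in a region that grows at most like $n^d$, hence the leading term of each constituent polynomial has degree at most $d$.

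I would organize the argument as three steps: (1) encode Noetherian-ness into a finite generating set for each Rees algebra and translate membership into an existential Presburger formula; (2) take the Boolean combination to get $\Phi(\fa,n)$ and invoke quantifier elimination/closure to confirm it is Presburger; (3) apply the quasi-polynomiality theorem for Presburger counting functions together with the degree bound. A subtlety worth flagging is that the $\lambda_\ell$'s in the representation of a monomial of $I_{i,n}$ need not be unique, and the product of two generators can land in $I_{i,n'}$ for various $n'$; but this is harmless because we only need the existence of \emph{some} representation with the $t$-degrees summing to $n$, which is exactly an existential statement, and the Rees algebra being finitely generated guarantees every element of $I_{i,n}$ is a sum of such monomials (and for a monomial $\fx^\fa$ in a monomial ideal, it suffices that $\fx^\fa$ is divisible by one such product monomial, which is the ``$+\,\fa'$'' slack vector).

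The main obstacle I anticipate is making the translation in step (1) airtight: one must be careful that ``$\fx^\fa \in I_{i,n}$'' as a monomial-membership condition is correctly captured. Because $I_{i,n}$ is a monomial ideal, $\fx^\fa \in I_{i,n}$ iff $\fx^\fa$ is divisible by some monomial minimal generator of $I_{i,n}$; and every monomial generator of $I_{i,n}$ arises as a monomial appearing in a product of the chosen algebra generators whose $t$-degrees sum to $n$. So the existential formula $\exists \lambda_1, \ldots, \lambda_{m_i} \geq 0, \exists \fa' \in \NN^d : (\sum \lambda_\ell c_{i,\ell} = n) \wedge (\sum \lambda_\ell \fb_{i,\ell} + \fa' = \fa)$ is equivalent to membership. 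Verifying this equivalence (both directions) is where the real content of the monomial-ideal structure enters, and it is the one place where I would spend care rather than defer to a black box; the rest is a direct invocation of Presburger machinery.
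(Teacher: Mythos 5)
Your proof follows essentially the same route as the paper: both translate membership in $I_{i,n}$ (and non-membership in $J_{j,n}$) into Presburger formulas by using the finite generating set of the Rees algebra of each graded family, form the Boolean combination, and invoke Woods' theorem on Presburger counting functions to obtain eventual quasi-polynomiality, with the degree bound deferred to the structure theory. The only cosmetic difference is that the paper writes the negated condition $Z_j$ out explicitly with universal quantifiers rather than appealing to closure under negation, and indexes the algebra generators by the minimal monomial generators of $I_{i,p}$ for $p$ up to the generation degree $N$, but these are the same thing.
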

For the proof of this proposition we need to introduce some notation.

A {\it Presburger formula} $F$ is a first order formula that can be written using quantifiers $\exists$,  $ \forall$,  boolean operations $\wedge$, $\vee$, $\neg$, and linear inequalities  with variables evaluated over the natural numbers. The free variables of $F$ is the set of variables $\fa$ not affected by quantifiers.  Let $\fa$ be a set of variables and $F(\fa, n)$ a  Presburger formula having $\fa$ and $n$ as free variables. Set 
$$f(n) = \#\{\fa\in \NN^d \mid F(\fa, n)\},$$
this function is called the {\it Presburger counting function} for $F(\fa, n)$.

A {\it piecewise quasi-polynomial} on $n$ is a function $f:\NN\rightarrow \QQ$ such that there exists a finite  partition $\{\Gamma_i\cap \NN\} _i$ of $\NN$ with $\Gamma_i$ intervals and such that there exist quasi-polynomials $f_i$ with $f(n) = f_i(n)$ whenever $n\in \Gamma_i\cap\NN$, for every $i$.

We say that a sequence $\{s_n\}_{n\gs 0}$ has a {\it rational generating function} if $\sum_{n\in \NN} s_nx^n$ can be written as $\frac{q(x)}{(1-x^{b_1})(1-x^{b_2})\cdots (1-x^{b_r})}$ for some $q(x)\in \QQ[x]$ and positive integers $b_1,b_2,\ldots, b_r$.

\begin{thm}[
{\cite[1.10, 1.12]{Woods}}]\label{kevin}
Given a function $f:\NN\rightarrow \QQ$, the following are equivalent:
\begin{enumerate}
\item $f$ is a Presburger counting function.
\item $f$ is a piecewise quasi-polynomial whose range is in $\NN$.
\item The generating function $\sum_{n\in \NN}f(n)x^n$ is rational.
\end{enumerate}
\end{thm}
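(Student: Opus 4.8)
The plan is to establish the cycle of implications $(1)\Rightarrow(2)\Rightarrow(3)\Rightarrow(1)$, bearing in mind that a Presburger counting function is by definition finite-valued (hence $\NN$-valued), which is exactly the range condition appearing in (2). The implication $(2)\Rightarrow(3)$ is a routine generating-function computation: given a finite partition of $\NN$ into intervals on the $i$-th of which $f$ agrees with a quasi-polynomial $f_i$, every bounded piece contributes a polynomial to $\sum_n f(n)x^n$, while the unique unbounded piece $\{n\gs N\}$, on which $f$ agrees with a quasi-polynomial $g$ of period $\pi$ and degree $\ls D$, is handled by the standard fact that $\sum_{n\gs0}g(n)x^n$ is rational with denominator dividing $(1-x^\pi)^{D+1}$ --- one only needs that the quasi-polynomials of period $\pi$ and degree $\ls D$ are spanned by the functions $n\mapsto\binom{n}{j}[\,n\equiv r\ (\mathrm{mod}\ \pi)\,]$, each with an obvious rational generating function. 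Subtracting the polynomial correction for $0\ls n<N$ and summing the finitely many pieces gives the required rational form.

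For $(1)\Rightarrow(2)$ I would first invoke quantifier elimination for Presburger arithmetic to replace the defining formula $F(\fa,n)$ by a quantifier-free one, so the solution set $P\subseteq\NN^{d+1}$ becomes a Boolean combination of linear inequalities and congruences; inclusion--exclusion then reduces $f(n)=\#\{\fa\mid(\fa,n)\in P\}$ to a signed sum of counts over ``cells'' of the form (rational polyhedron) $\cap$ (coset of a sublattice). Fixing a cell and a residue class of $n$, the corresponding slice is, after an affine change of coordinates, the set of lattice points of a polytope whose defining inequalities vary linearly with the parameter $n$, so the parametrized Ehrhart / vector-partition-function theory applies: the parameter line decomposes into finitely many intervals on each of which the count agrees with a genuine quasi-polynomial in $n$. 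Recombining the finitely many cells shows $f$ is piecewise quasi-polynomial, and it is $\NN$-valued because it is a (finite, by hypothesis) cardinality.

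The substance of the theorem, and the step I expect to be the main obstacle, is $(3)\Rightarrow(1)$. Writing $\sum_n f(n)x^n=q(x)/\prod_{j=1}^r(1-x^{b_j})$ with $q(x)=\sum_c q_c x^c\in\ZZ[x]$ after clearing denominators, the coefficient of $x^n$ in $x^c/\prod_j(1-x^{b_j})$ is the partition-type count $p_c(n)=\#\{(a_1,\dots,a_r)\in\NN^r\mid\sum_j a_jb_j=n-c\}$, itself a Presburger counting function, so $f=\sum_c q_c p_c$. The terms with $q_c\gs0$ assemble into a single Presburger counting function by forming the disjoint union of the defining formulas with extra selector variables; the difficulty is the terms with $q_c<0$, since the class of Presburger counting functions is not visibly closed under subtraction. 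The key technical input to get past this is a structural fact about Presburger-definable sets: if $g\gs h$ are Presburger counting functions, then there is a Presburger-definable injection of the parametrized solution set computing $h$ into the one computing $g$, so the fiberwise complement of its image has fiber-count $g-h$. Absorbing the negative contributions one at a time in this way --- which is legitimate once we know $f$ is everywhere $\NN$-valued, and this we know because a partial-fractions argument on the rational generating function shows $f$ is eventually, hence (being finite-valued) everywhere, nonnegative --- exhibits $f$ as a Presburger counting function. Producing that definable injection, which genuinely uses the polyhedral geometry of Presburger sets rather than any formal identity, is the crux of the argument.
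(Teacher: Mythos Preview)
The paper does not prove this theorem at all: it is quoted verbatim as a result of Woods \cite[1.10, 1.12]{Woods} and invoked as a black box in the proof of Proposition~\ref{quasi-pol_GenM}. There is therefore no ``paper's own proof'' to compare your attempt against.

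That said, your sketch is broadly faithful to the strategy in Woods's paper --- quantifier elimination plus parametric Ehrhart theory for $(1)\Rightarrow(2)$, elementary generating-function manipulation for $(2)\Rightarrow(3)$, and a definable-injection argument to handle the signed combination arising in $(3)\Rightarrow(1)$. One point to flag: your parenthetical claim that a partial-fractions argument on the rational generating function forces $f$ to be eventually nonnegative is not correct as stated (e.g.\ $-x/(1-x)$ has all coefficients $-1$), so the $\NN$-valuedness of $f$ in the direction $(3)\Rightarrow(1)$ is not something you can extract from rationality alone; in Woods's treatment this is handled by the precise formulation of the equivalence, and you should check that hypothesis carefully rather than try to derive it.
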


\vspace{2mm}

\begin{proof}[Proof of Proposition \ref{quasi-pol_GenM}]

Let $N$ be such that $f(n)<\infty$ for every $n\gs N$  and such that all the algebras $\oplus_{n\gs 0} I_{i,n}t^n$ for $i=1,\ldots, r$ and  $\oplus_{n\gs 0} J_{i,n}t^n$ for $i=1,\ldots, s$ are generated up to degree $N$.

Fix $\fa = (a_1,\ldots, a_d)$ . Let $\fx^{\fu_{i,p,1}}, \ldots, \fx^{\fu_{i,p, m_{i,p}}}$ be the minimal monomial generators of $I_{i,p}$ for $i=1,\ldots, r$ and $1\ls p\ls N$, and consider the following Presburger formula 
$$Y_i(\fa, n) = (\exists t_{p,q}, 1\ls p\ls N,\, 1\ls q \ls m_{i,p}) \Big( \sum_{p,q} pt_{p,q}= n \wedge \fa \succeq \sum_{p,q} t_{p,q} \fu_{i, p,q} \Big),$$
where $\succeq$ is indicates component-wise inequality.  Similarly, let 
$\fx^{\fv_{i,p,1}}, \ldots, \fx^{\fv_{i,p, n_{i,p}}}$  be the minimal monomial generators of $J_{i,p}$ for $i=1,\ldots, s$, and $1\ls p\ls N$, and consider
$$Z_i(\fa, n) = (\forall t_{p,q}, 1\ls p\ls N,\, 1\ls q \ls n_{i,p} )\Big( \sum_{p,q} pt_{p,q} < n \vee \neg(\fa \succeq \sum_{p,q} t_{p,q} \fv_{i, p,q} )\Big).$$

It follows that for every $i$ and $n$,  $\fx^{\fa}\in I_{i,n} $ if and only if $Y_i(\fa, n)$ is satisfied, and  $\fx^{\fa}\not\in J_{i,n} $ if and only if $Z_i(\fa, n)$ is satisfied. 
Hence, $$g(n):=f(n+N) = \#\left\{\fa\in\NN^d\mid \Big(\bigwedge_{i=1}^{r} Y_i(\fa,n+N)\Big) \wedge \Big(\bigwedge_{i=1}^{s} Z_i(\fa,n+N)\Big) \right\}$$
is a Presburger counting function, and by Theorem \ref{kevin} a piecewise quasi-polynomial. This implies that $g$, and hence $f$, agree with a quasi-polynomial for $n\gg0$, as desired.

Finally, an analysis of the proof  of Theorem \ref{kevin} shows that this  quasi-polynomial has degree at most $d$. 
\end{proof}

\begin{lemma}\label{bt}
Let $(I_{n})_{n\gs 0}$ be a Noetherian graded family of monomial ideals and fix $\Delta'$ a simplicial complex with support $[d]$. Consider the function 
$$f_{\Delta'}(n) = \#\{\fa\in \NN^d\mid \Delta_{\fa}(I_n)=\Delta'\},$$
and assume $f_{\Delta'}(n)<\infty$ for $n\gg0$. Then $f_{\Delta'}(n)$ agrees with a quasi-polynomial  of degree at most $d$ for $n\gg0$.
\end{lemma}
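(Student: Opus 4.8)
The plan is to reduce to Proposition \ref{quasi-pol_GenM} by rewriting the condition $\Delta_{\fa}(I_n)=\Delta'$ as a finite conjunction of monomial membership and non-membership conditions in Noetherian graded families of monomial ideals.

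First I would unwind the definition of $\Delta_{\fa}$ for the relevant vectors $\fa\in\NN^d$. Since every coordinate of such an $\fa$ is nonnegative, $G_{\fa}=\emptyset$ and $\fa^+=\fa$, so by definition $\Delta_{\fa}(I_n)=\{F\subseteq[d]\mid \fx^{\fa}\notin (I_n)_F\}$. Therefore, for $\fa\in\NN^d$ we have $\Delta_{\fa}(I_n)=\Delta'$ if and only if $\fx^{\fa}\notin (I_n)_F$ for every $F\in\Delta'$ and $\fx^{\fa}\in (I_n)_F$ for every $F\subseteq[d]$ with $F\notin\Delta'$.

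Second, I would verify that for each fixed $F\subseteq[d]$ the family $\big((I_n)_F\big)_{n\gs 0}$ is again a Noetherian graded family of monomial ideals. Indeed $\pi_F$ is a $k$-algebra endomorphism of $R$, so $(I_0)_F=\pi_F(R)=R$, and $(I_n)_F(I_m)_F=\pi_F(I_n)\pi_F(I_m)\subseteq\pi_F(I_nI_m)\subseteq\pi_F(I_{n+m})=(I_{n+m})_F$, so it is a graded family; and extending $\pi_F$ to $R[t]\to R[t]$ by $t\mapsto t$ realizes $\bigoplus_{n\gs 0}(I_n)_F t^n$ as the image of the finitely generated algebra $\bigoplus_{n\gs 0}I_n t^n$, so it too is finitely generated.

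Finally, I would apply Proposition \ref{quasi-pol_GenM}, taking as its ``$I$''-families the $\big((I_n)_F\big)_{n\gs 0}$ for $F$ ranging over the subsets of $[d]$ that are not in $\Delta'$, and as its ``$J$''-families the $\big((I_n)_F\big)_{n\gs 0}$ for $F$ ranging over the faces of $\Delta'$. By the first step, the set $S_n$ in that proposition is precisely $\{\fa\in\NN^d\mid \Delta_{\fa}(I_n)=\Delta'\}$, so $|S_n|=f_{\Delta'}(n)$, which is finite for $n\gg 0$ by hypothesis. Proposition \ref{quasi-pol_GenM} then gives that $f_{\Delta'}$ agrees with a quasi-polynomial of degree at most $d$ for $n\gg 0$, as claimed. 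I do not expect a genuine obstacle: this lemma is essentially a corollary of Proposition \ref{quasi-pol_GenM}, and the only points requiring minor care are the closure of the class of Noetherian graded families of monomial ideals under $I\mapsto I_F$ (the finite-generation claim above) and checking that the equivalence in the first step is exact. One could trim the list of families used — testing non-membership only on the facets of $\Delta'$ and membership only on the minimal nonfaces of $\Delta'$ suffices, since $\{F\mid \fx^{\fa}\in (I_n)_F\}$ is closed under passing to supersets — but this is not necessary for the argument.
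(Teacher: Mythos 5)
Your proof is correct and takes essentially the same route as the paper's (very terse) argument: the paper also observes that each $((I_n)_F)_{n\gs 0}$ is a Noetherian graded family and then invokes the description of $\Delta_\fa(I_n)$ together with Proposition \ref{quasi-pol_GenM}, and you have simply written out those implicit steps. The only nitpick is a minor imprecision in the Noetherianity verification: the image of $\bigoplus_n I_n t^n$ under the extended $\pi_F$ is $\bigoplus_n \pi_F(I_n)t^n$ (set-theoretic image, not yet ideals of $R$); one should then note that $\bigoplus_n (I_n)_F t^n$ is the $R$-subalgebra of $R[t]$ generated by that image and is therefore still finitely generated.
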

\begin{proof}
It is clear that $((I_{n})_F)_{n\gs 0}$  is a Noetherian graded family for every $F\subseteq [d]$, therefore the result follows from the description of $\Delta_\fa(I_n)$ and  Proposition \ref{quasi-pol_GenM}.
\end{proof}

We are now ready to present the main theorem of this section.

\begin{thm}\label{mainMono}
Let $(I_{n})_{n\gs 0}$ be a Noetherian graded family of monomial ideals.  Assume $\lambda(\HH{i}{\fm}{R/I_n})<\infty$ for $n\gg 0$. Then the sequence $\{\lambda(\HH{i}{\fm}{R/I_n})\}_{n\gs 0}$ agrees with a quasi-polynomial of degree at most $d$ for $n\gg 0$. Moreover, this sequence has a rational generating function. 
\end{thm}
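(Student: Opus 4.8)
The plan is to reduce the computation of $\lambda(\HH{i}{\fm}{R/I_n})$ to a finite sum of counting functions of the type handled by Lemma \ref{bt}, and then assemble the pieces. First I would recall that $\lambda(\HH{i}{\fm}{R/I_n}) = \sum_{\fa \in \ZZ^d} \dim_k \HH{i}{\fm}{R/I_n}_{\fa}$, and that by hypothesis this sum is finite for $n \gg 0$. By Takayama's formula (Theorem \ref{Takayama}), the $\fa$-graded piece only depends on $\fa$ through the pair $(G_\fa, \Delta_\fa(I_n))$ together with the homology $\tilde H_{i-|G_\fa|-1}(\Delta_\fa(I_n),k)$; in particular the dimension of the $\fa$-piece is a function that is constant on the set of $\fa$ with a fixed support pattern $G_\fa$ and a fixed simplicial complex $\Delta_\fa(I_n) = \Delta'$. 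So I would organize the sum by first fixing a subset $G \subseteq [d]$ and, via Remark \ref{otherInt}, pass to the smaller polynomial ring $R_{\fa+}$ where $G_\fa = \emptyset$; this reduces everything to counting, for each simplicial complex $\Delta'$ on a vertex set of size $d - |G|$, how many nonnegative vectors $\fa^+$ give $\Delta_{\fa^+}(I_{G,n}) = \Delta'$, and multiplying by the (finite, $n$-independent) Betti number $\dim_k \tilde H_{\bullet}(\Delta',k)$.

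The key step is then this: the families $((I_n)_G)_{n\gs 0}$ obtained by the $\pi_G$ specializations are again Noetherian graded families of monomial ideals (as noted in the proof of Lemma \ref{bt}), so for each such $G$ and each $\Delta'$ the function $f_{\Delta'}(n) = \#\{\fa^+ \mid \Delta_{\fa^+}((I_n)_G) = \Delta'\}$ agrees with a quasi-polynomial of degree at most $d$ for $n \gg 0$ by Lemma \ref{bt}. Here I should be a little careful: the finiteness of $f_{\Delta'}(n)$ for $n \gg 0$ is needed to apply Lemma \ref{bt}, and this follows from the standing hypothesis $\lambda(\HH{i}{\fm}{R/I_n}) < \infty$ for $n \gg 0$, because any $\Delta'$ contributing to $\HH{i}{\fm}{R/I_n}$ (i.e. with nonvanishing reduced homology in the relevant degree) has its fiber counted with a positive multiplicity inside the finite total length. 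For the $\Delta'$ with vanishing relevant homology the multiplier is $0$ so those terms drop out regardless. Summing over the finitely many subsets $G \subseteq [d]$ and the finitely many simplicial complexes $\Delta'$ on the corresponding vertex sets, we get
\[
\lambda(\HH{i}{\fm}{R/I_n}) = \sum_{G \subseteq [d]} \ \sum_{\Delta'} \dim_k \tilde H_{i - |G| - 1}(\Delta', k) \cdot f^{G}_{\Delta'}(n),
\]
a finite $\NN$-linear combination of quasi-polynomials of degree at most $d$, hence itself a quasi-polynomial of degree at most $d$ for $n \gg 0$.

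For the rational generating function statement, the point is that each $f^{G}_{\Delta'}(n)$ is a Presburger counting function (this is exactly how Lemma \ref{bt} and Proposition \ref{quasi-pol_GenM} are proved, via Theorem \ref{kevin}), so by the equivalence $(1)\Leftrightarrow(3)$ in Theorem \ref{kevin} each has a rational generating function; and a finite $\NN$-linear combination (indeed any finite $\QQ$-linear combination) of sequences with rational generating functions again has a rational generating function, since the set of rational power series is closed under addition and scalar multiplication. The main obstacle I anticipate is bookkeeping rather than conceptual: making sure the reduction via Remark \ref{otherInt} correctly tracks the homological degree shift by $|G|$ and the vertex-set shrinking, and verifying that the hypothesis on $\lambda(\HH{i}{\fm}{R/I_n})$ really does give the finiteness needed to invoke Lemma \ref{bt} for every relevant $(G, \Delta')$ — i.e. that no single fiber $f^G_{\Delta'}(n)$ with a positive multiplier can be infinite while the total length stays finite.
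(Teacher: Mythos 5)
Your overall strategy (Takayama, then group by the complex $\Delta'$ and cite Lemma~\ref{bt}, then pass to generating functions via Theorem~\ref{kevin}) is the right one, and the generating-function part is fine. But the decomposition by $G\subseteq[d]$ introduces a real counting error, and the worry you raise at the end is aimed at the wrong spot. Your formula
\[
\lambda(\HH{i}{\fm}{R/I_n}) = \sum_{G \subseteq [d]} \sum_{\Delta'} \dim_k \tilde H_{i - |G| - 1}(\Delta', k) \cdot f^{G}_{\Delta'}(n)
\]
is not correct as a rewriting of $\sum_{\fa\in\ZZ^d}\dim_k\HH{i}{\fm}{R/I_n}_{\fa}$: for a fixed $G\neq\emptyset$ and fixed $\fa^+$, the set of $\fa\in\ZZ^d$ with $G_\fa=G$ projecting to that $\fa^+$ is infinite (the negative coordinates are completely unconstrained), and all of them give the same graded piece by Remark~\ref{otherInt}. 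So the count $\#\{\fa\mid G_\fa=G,\ \Delta_\fa(I_n)=\Delta'\}$ is not $f^G_{\Delta'}(n)$ but $f^G_{\Delta'}(n)\cdot\infty$ when $G\neq\emptyset$. The danger is therefore not, as you suggest, that some fiber $f^G_{\Delta'}(n)$ might be infinite while the total length stays finite; it is that $f^G_{\Delta'}(n)$ being merely \emph{positive}, for some $G\neq\emptyset$ and $\Delta'$ with $\tilde H_{i-|G|-1}(\Delta')\neq 0$, already makes the total length infinite. The correct conclusion from the hypothesis is that every such product vanishes, i.e.\ that $\HH{i}{\fm}{R/I_n}_{\fa}=0$ for all $\fa\notin\NN^d$.

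That last observation is precisely what the paper takes as its starting point, citing \cite[Proposition~1]{Tak}: finite length forces $\fa\in\NN^d$, so $G_\fa=\emptyset$ always and there is nothing to sum over $G$. The sum then collapses at once to
$\lambda(\HH{i}{\fm}{R/I_n})=\sum_{\Delta'\subseteq\Delta(I_1)}\dim_k\tilde H_{i-1}(\Delta',k)\,f_{\Delta'}(n)$
(using $I_1^n\subseteq I_n$ to bound the $\Delta'$ by $\Delta(I_1)$, which also bounds the number of terms), and a single application of Lemma~\ref{bt} finishes the quasi-polynomial claim. So the fix to your argument is to prove, before writing any formula, that all $G\neq\emptyset$ contributions vanish; once you do, your proof becomes the paper's, and the $\fa^+$/Remark~\ref{otherInt} machinery can be dropped entirely.
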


\begin{proof}
Consider $n\gg 0$ and fix $\fa$ such that $\HH{i}{\fm}{R/I_n}_{\fa}\neq 0$. Since $\lambda(\HH{i}{\fm}{R/I_n})<\infty$ we have $\fa\in \NN^d$ (see \cite[Proposition 1]{Tak}). We remark that since $I_1^n\subseteq I_n$, then $\sqrt{I_1}\subseteq \sqrt{I_n}$. Therefore, $\Delta(I_n)$ is a subcomplex of $\Delta(I_1)$ and then so is $\Delta_{\fa}(I_n)$.

Now,
\begin{align*}
\lambda(\HH{i}{\fm}{R/I_n}) &= \sum_{\fa\in \NN^d} \dim_k \HH{i}{\fm}{R/I_n}_{\fa}\\
&= \sum_{\fa\in \NN^d} \dim_k  \tilde{H}_{i-1}(\Delta_{\fa}(I_n), k),\,\, \text{ by Theorem \ref{Takayama}},\\
&= \sum_{\Delta'\subseteq \Delta(I_1)} \sum_{\{ \fa \mid \Delta_{\fa}(I_n) = \Delta' \}} \dim_k  \tilde{H}_{i-1}(\Delta', k)\\
&= \sum_{\Delta'\subseteq \Delta(I_1)} \dim_k  \tilde{H}_{i-1}(\Delta', k)f_{\Delta'}(n),
\end{align*}
where $f_{\Delta'}(n)$ is defined as in Lemma \ref{bt}. The conclusion now follows from Lemma \ref{bt} as a linear combination of quasi-polynomials is a quasi-polynomial. The last statement follows from Theorem \ref{kevin}.
\end{proof}

\begin{cor}\label{corofMainMon}
Under the assumptions of Theorem \ref{mainMono}, we have
$$\limsup_{n\rightarrow \infty}\frac{\lambda(\HH{i}{\fm}{R/I_n})}{n^d}<\infty.$$
\end{cor}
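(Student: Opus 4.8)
\textbf{Proof proposal for Corollary \ref{corofMainMon}.}

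The plan is to read the conclusion off directly from Theorem \ref{mainMono}, so there is really only one short step. That theorem provides an integer $N$, a modulus $\pi \in \NN$, and polynomials $p_0,\dots,p_{\pi-1}\in\QQ[n]$, each of degree at most $d$, with $\lambda(\HH{i}{\fm}{R/I_n}) = p_j(n)$ whenever $n\ge N$ and $n\equiv j \Mod{\pi}$. The remaining task is the elementary observation that a quasi-polynomial of degree at most $d$, divided by $n^d$, has finite $\limsup$: for each residue $j$ let $c_j$ be the coefficient of $n^d$ in $p_j$ (with $c_j = 0$ when $\deg p_j < d$), so that $\lim_{n\to\infty,\, n\equiv j} p_j(n)/n^d = c_j < \infty$. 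Since for $n\ge N$ the sequence $n\mapsto \lambda(\HH{i}{\fm}{R/I_n})/n^d$ is an interleaving of finitely many sequences converging to the finite limits $c_j$, one gets
$$\limsup_{n\to\infty}\frac{\lambda(\HH{i}{\fm}{R/I_n})}{n^d} \;=\; \max_{0\le j<\pi} c_j \;<\;\infty,$$
and modifying the finitely many terms with $n<N$ does not change the $\limsup$.

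I expect no genuine obstacle here: all the substantive work — reducing to counting lattice points via Takayama's formula (Theorem \ref{Takayama}) and feeding this into the Presburger-arithmetic machinery of Theorem \ref{kevin} — has already been done inside the proof of Theorem \ref{mainMono}, and this corollary merely extracts the growth-rate consequence. If one preferred to avoid quoting full quasi-polynomiality, an equivalent route would be to start from the identity
$$\lambda(\HH{i}{\fm}{R/I_n}) \;=\; \sum_{\Delta'\subseteq \Delta(I_1)} \dim_k \tilde H_{i-1}(\Delta',k)\, f_{\Delta'}(n)$$
established in that proof (a finite sum, since $\Delta(I_1)$ has finitely many subcomplexes), observe that each $f_{\Delta'}$ with $\tilde H_{i-1}(\Delta',k)\ne 0$ is finite-valued by hypothesis and hence, by Proposition \ref{quasi-pol_GenM} (via Lemma \ref{bt}), is bounded above by a polynomial in $n$ of degree at most $d$, and then divide by $n^d$ and pass to $\limsup$. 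Either way the bound is immediate.
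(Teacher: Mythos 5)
Your proof is correct and matches the paper's approach: the paper states this corollary without any written proof, treating it as an immediate consequence of Theorem~\ref{mainMono}, and your argument spells out precisely the elementary observation — that a quasi-polynomial of degree at most $d$, divided by $n^d$, has finite $\limsup$ — that the paper leaves implicit. The alternative route you sketch via the decomposition over subcomplexes is also fine, but it is just an unpacking of the same theorem's proof rather than a genuinely different method.
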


\begin{example}\label{P5example}
Let $I=(x_1x_2,x_2x_3,x_3x_4,x_4x_5)$. Let $\Delta_1$ and $\Delta_2$ be the simplexes in the sets $S_1=\{1,3,5\}$ and $S_2=\{2,4\}$ respectively. It follows from Theorem \ref{Takayama} that for every $n\gs 1$, $\HH{1}{\fm}{R/I^n}_{\fa}\neq 0$ if and only if  $\Delta_\fa(I)=\Delta_1 \sqcup \Delta_2$ if and only
\begin{align}\label{ineqs}
\nonumber a_1 + a_3 + a_4&\gs n\\
a_2 + a_3 + a_5&\gs n\\
\nonumber a_1 + a_3 + a_5 &< n\\
\nonumber a_2 + a_4 &< n.
\end{align}

Set $\delta_1(\fx)=x_1 + x_3 + x_4$ , $\delta_2(\fx)=x_2 + x_3 + x_5$, $\delta_3(\fx)=x_1 + x_3 + x_5 $, and $\delta_4(\fx)=x_2 + x_4$. Let us abbreviate the set of inequalities $\{x_1\gs 0,\,x_2\gs 0,\,x_3\gs 0,\, x_4\gs 0,\, x_5\gs 0 \}$ by $\fx \succeq \f0$ and define the following polytopes in $\RR^5$,
$$\P_1=\{\fx\succeq \f0,\,\delta_1\gs 1,\, \delta_2\gs 1, \delta_3\ls 1, \delta_4\ls 1 \},\quad 
\P_2=\{\fx\succeq \f0,\,\delta_1\gs 1,\, \delta_2\gs 1, \delta_3=1, \delta_4\ls 1 \},$$
$$\P_3=\{\fx\succeq \f0,\,\delta_1\gs 1,\, \delta_2\gs 1, \delta_3\ls1, \delta_4= 1 \},\quad 
\P_4=\{\fx\succeq \f0,\,\delta_1\gs 1,\, \delta_2\gs 1, \delta_3=1, \delta_4=1 \}.$$
By the inclusion-exclusion principle we can count the number of integer points $\fa$ satisfying the inequalities \eqref{ineqs} as 
\begin{equation}\label{incexcl}
E(n)=\#(n\P_1\cap \NN^d)-\#(n\P_2\cap \NN^d)-\#(n\P_3\cap \NN^d)+\#(n\P_4\cap \NN^d),
\end{equation}
that is, a combination of the  Ehrhart quasi-polynomials of the rational polytopes $\P_1,\, \P_2,\, \P_3,$ and $ \P_4,$ (see \cite{Mc} and \cite{St}).  Using the function {\tt HilbertSeries} of {\it Normaliz} (see \cite{Normaliz}) we obtain 
$$
 \lambda(\HH{1}{\fm}{R/I^n})=E(n)=
\begin{cases}
\frac{n^5}{240}+\frac{n^4}{96}-\frac{n^3}{48}-\frac{n^2}{24}+\frac{n}{60},\,\,\, \text{if }n\equiv 0 \Mod{2}\\
\frac{n^5}{240}+\frac{n^4}{96}-\frac{n^3}{48}-\frac{n^2}{24}+\frac{n}{60} + \frac{1}{32},\,\,\, \text{if }n\equiv 1 \Mod{2};
\end{cases}
$$
and the generating function
$$\sum_{n=0}^\infty \lambda(\HH{1}{\fm}{R/I^n})x^n = \frac{x^3}{(1-x)^5(1-x^2)}.$$
\end{example}

\vspace{2mm}

\section{Existence of  limit for classes of monomial ideals}\label{smono2}

Let $R=k[x_1,\ldots, x_d]$ and $\fm=(x_1,\ldots, x_d)$. For a given $R$-ideal $I$, we denote by $\ol{I}$ the integral closure of $I$ and by $\conv(I)$ the {\it Newton polyhedron} of $I$, i.e., the convex hull of the set $\{\fa\in \NN^d\mid \fx^\fa\in I\}$. Notice that $\NN^d\cap \conv(I)=\{\fa\in \NN^d\mid \fx^\fa\in \ol{I}\}$


We start with the following technical lemma that will allow us to prove the existence of the limit in the main theorem.

\begin{lemma}\label{coconvex}
Let $\Gamma, \Gamma_1,\ldots, \Gamma_s$ be convex polyhedra in $\RR^d$ such that $\Gamma_i\subseteq \Gamma$ for $1\ls i\ls s$. Set $n\C:=n\Gamma\setminus \cup_{i=1}^s n\Gamma_i$ for every $n\gs 1$ and assume  $\C$ $($equivalently every $n\C )$ is bounded, then
$$\vol(\C)=\lim_{n\rightarrow \infty} \frac{\#(\ZZ^d\cap n\C)}{n^d}.$$
\end{lemma}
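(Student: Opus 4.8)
The plan is to reduce the statement to a standard fact in Ehrhart-type volume theory: if a bounded region is obtained from a dilated polyhedron by removing finitely many dilated subpolyhedra, the number of lattice points grows like $n^d$ times the volume. First I would fix the ambient setup: since $\C$ (hence every $n\C$) is bounded, $\Gamma$ need not be bounded, but the portion of it relevant to counting lattice points in $n\C$ is. The natural first step is to pass from $\C$ to a genuinely bounded polyhedron by intersecting $\Gamma$ with a large box, or better, to work directly with the indicator function identity
$$
\mathbf{1}_{n\C} \;=\; \mathbf{1}_{n\Gamma} \;-\; \mathbf{1}_{\bigcup_i n\Gamma_i},
$$
and then expand $\mathbf{1}_{\bigcup_i n\Gamma_i}$ by inclusion–exclusion into a signed sum of indicator functions of the finite intersections $n(\Gamma_{i_1}\cap\cdots\cap\Gamma_{i_k})$. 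Each such intersection is again a convex polyhedron contained in $\Gamma$, and because $n\C$ is bounded, all the pieces appearing with nonzero contribution near $n\C$ are bounded (or can be truncated to a box without changing the count). So $\#(\ZZ^d\cap n\C)$ becomes a signed $\ZZ$-linear combination of $\#(\ZZ^d\cap n Q)$ over finitely many rational polytopes $Q$.

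The second step is to invoke Ehrhart's theorem (cf. \cite{Mc}, \cite{St}): for a rational polytope $Q$, $\#(\ZZ^d\cap nQ)$ agrees with a quasi-polynomial in $n$ whose leading term is $\vol_d(Q)\, n^d$ (interpreting $\vol_d(Q)=0$ if $\dim Q<d$). Dividing by $n^d$ and letting $n\to\infty$, each term $\#(\ZZ^d\cap nQ)/n^d$ converges to $\vol_d(Q)$. Taking the same signed combination of these limits and using finite additivity of Lebesgue measure under inclusion–exclusion, $\lim_{n\to\infty}\#(\ZZ^d\cap n\C)/n^d$ equals the corresponding signed sum of volumes, which is exactly $\vol(\Gamma\setminus\bigcup_i\Gamma_i)=\vol(\C)$ — here one must be slightly careful that the unbounded parts of $\Gamma$ and the $\Gamma_i$ that cancel in $\C$ also cancel in the volume computation, which is automatic since $\vol(\C)<\infty$ and the same inclusion–exclusion identity holds for volumes on the bounded region obtained after truncating by a box containing all the $n\C$ for the purposes of… actually, the cleanest route is to note $\C$ itself is bounded, intersect everything with a fixed large box $B\supseteq \overline{\C}$, replace each $\Gamma_i$ by $\Gamma_i\cap B$ and $\Gamma$ by $\Gamma\cap B$, which changes neither $\C$ nor $\vol(\C)$, and then all polyhedra in sight are polytopes.

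The third step is just bookkeeping: assemble the two identities (lattice-point inclusion–exclusion and volume inclusion–exclusion) and match coefficients. The main obstacle I anticipate is not any deep point but the care needed with \emph{boundary lattice points and lower-dimensional faces}: $n\C$ is a difference of closed sets and may fail to be closed or open, so one should either observe that the boundary contributes $O(n^{d-1})$ lattice points (hence nothing to the $n^d$ limit) and work with, say, the closure $n\overline{\C}$, or absorb the open/closed discrepancies into the inclusion–exclusion expansion where they appear on lower-dimensional faces $\Gamma_{i_1}\cap\cdots\cap\Gamma_{i_k}$ whose Ehrhart leading terms vanish anyway. In either formulation the subleading $O(n^{d-1})$ errors wash out after dividing by $n^d$, so the limit is exactly $\vol(\C)$.
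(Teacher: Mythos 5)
Your proof is correct in substance, but it takes a genuinely different route from the paper's. The paper argues directly: after disposing of the case $\dim\C<d$ (where both sides are $0$, using that an at-most-$(d-1)$-dimensional bounded region has $O(n^{d-1})$ lattice points in its $n$-th dilate), it writes
$$
\frac{\#(\ZZ^d\cap n\C)}{n^d}=\#\Bigl(\tfrac{1}{n}\ZZ^d\cap\C\Bigr)\Bigl(\tfrac{1}{n}\Bigr)^d
$$
and recognizes the right side as a Riemann sum for $\int_{\RR^d}\chi_\C\,d\fx=\vol(\C)$ over the grid $\tfrac{1}{n}\ZZ^d$. That approach is short, needs only that $\partial\C$ has measure zero (automatic for a finite boolean combination of polyhedra), and requires no decomposition of $\C$. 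Your approach instead boxes everything off, expands by inclusion--exclusion into a signed sum of lattice-point counts in polytopes, invokes the Ehrhart leading-term fact termwise, and reassembles by inclusion--exclusion for volumes; this is more bookkeeping, and it trades the Riemann-sum step for a citation to Ehrhart theory. One small caveat: the lemma as stated imposes no rationality hypothesis on $\Gamma,\Gamma_1,\ldots,\Gamma_s$, so Ehrhart's \emph{quasi-polynomial} theorem does not literally apply; what you actually need is only the weaker fact that $\#(\ZZ^d\cap nQ)/n^d\to\vol(Q)$ for any bounded polytope $Q$ (rational or not), which is itself usually proved by exactly the Riemann-sum argument the paper uses. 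With that substitution your proof is complete, though somewhat more roundabout than the paper's. Both approaches correctly handle the open/closed boundary issue you flag, since boundary contributions are $O(n^{d-1})$ and wash out in the limit.
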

\begin{proof}
Set $c:=\dim \C$ and let $\P$ be a $c$-dimensional polytope that contains $\C$, and $E_\P(n)$ its Ehrhart polynomial. From $\#(\ZZ^d\cap n\C)\ls E_\P(n)$ it follows that if $c<d$, then $\lim_{n\rightarrow \infty} \frac{\#(\ZZ^d\cap n\C)}{n^d}=0=\vol(\C)$, hence we may assume $c=d$. Now, $$\frac{\#(\ZZ^d\cap n\C)}{n^d}=\#\Big(\frac{1}{n}\ZZ^d\cap \C\Big) \Big(\frac{1}{n}\Big)^d$$ 
and the latter is a Riemann sum of the characteristic function $\chi_\C(\fx)$ using the partition of $\RR^d$ consisting of the boxes with vertices on $\frac{1}{n}\ZZ^d$. Therefore, 
$$\lim_{n\rightarrow \infty} \frac{\#(\ZZ^d\cap n\C)}{n^d} = \int_{\RR^d} \chi_\C(\fx) \, d\fx = \vol(\C).$$
\end{proof}

 \vspace{1mm}
 
\begin{lemma}\label{limIdeanAvoid}
Let $R=k[x_1,\ldots, x_d]$ and $I_1,\ldots, I_r, J_1,\ldots, J_s$ monomial ideals. For every $n\gs 1$ consider the set
  $$S_n = \{\fa\in\NN^d \mid \fx^\fa \in \ol{I_i^n} \text{ for every }1\ls i\ls r, \text{ and }\fx^\fa \not\in \ol{J_i^n} \text{ for every } 1\ls i\ls s\}.$$
Assume $|S_n|<\infty$ for $n\gg 0$, then the limit $\lim_{n\rightarrow \infty}\frac{|S_n|}{n^d}$ exists and is equal to a rational number.
\end{lemma}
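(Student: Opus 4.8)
The plan is to reduce the counting of lattice points in $S_n$ to a volume computation via Lemma \ref{coconvex}, exactly as the setup of that lemma anticipates. First I would record the key geometric fact: for a monomial ideal $I$, the set $\{\fa \in \NN^d \mid \fx^{\fa} \in \ol{I^n}\}$ equals $\NN^d \cap \conv(I^n) = \NN^d \cap n\conv(I)$, using that $\conv(I^n) = n\conv(I)$ (the Newton polyhedron scales under powers). Thus, writing $\Gamma := \bigcap_{i=1}^r \conv(I_i)$ (a convex polyhedron, being a finite intersection of such) and $\Gamma_j := \Gamma \cap \conv(J_j)$ for $1 \ls j \ls s$, we get
$$
S_n = \NN^d \cap \Big( n\Gamma \setminus \bigcup_{j=1}^s n\Gamma_j \Big),
$$
since $\fx^{\fa} \in \ol{I_i^n}$ for all $i$ exactly means $\fa \in n\Gamma$, and $\fx^{\fa} \notin \ol{J_j^n}$ for all $j$ means $\fa$ avoids each $n\conv(J_j)$, equivalently (intersecting with $n\Gamma$) each $n\Gamma_j$. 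Set $\C := \Gamma \setminus \bigcup_j \Gamma_j$, so $S_n = \NN^d \cap n\C$ and $n\C = n\Gamma \setminus \bigcup_j n\Gamma_j$.

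Next I would handle the boundedness hypothesis: the assumption $|S_n| < \infty$ for $n \gg 0$ forces $\C$ to be bounded. One way to see this is that if $\C$ were unbounded it would contain a ray, hence $n\C \supseteq \C$ contains infinitely many lattice points for each such $n$ once we are in the stable range (a rational polyhedron that is unbounded and contains a lattice point in its relative interior contributes infinitely many lattice points; more carefully, unboundedness of $\C = \Gamma \setminus \bigcup \Gamma_j$ gives a recession direction of $\Gamma$ not in any $\Gamma_j$, producing an infinite sub-ray of lattice points in $n\C$ for large $n$). Since the $|S_n|$ are eventually finite, $\C$ is bounded. Then I would invoke Lemma \ref{coconvex} directly with this $\Gamma$, the $\Gamma_j$, and note $\NN^d \cap n\C = \ZZ^d \cap n\C$ because $\C \subseteq \Gamma \subseteq \RR_{\gs 0}^d$ (the Newton polyhedron of an ideal lies in the nonnegative orthant), up to a discrepancy on lower-dimensional boundary faces that does not affect the limit. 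This yields
$$
\lim_{n \to \infty} \frac{|S_n|}{n^d} = \vol(\C).
$$

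Finally I would argue rationality of $\vol(\C)$. The polyhedra $\Gamma$ and $\Gamma_j$ are all \emph{rational} — each $\conv(I)$ is cut out by inequalities with rational (indeed integer) coefficients coming from the finitely many minimal generators of $I$ — so $\C$ is a finite Boolean combination of rational polyhedra. By inclusion–exclusion, $\vol(\C) = \sum_{T \subseteq \{1,\dots,s\}} (-1)^{|T|} \vol\big(\Gamma \cap \bigcap_{j \in T} \Gamma_j\big)$, a signed sum of volumes of bounded rational polytopes, each of which is rational; hence $\vol(\C) \in \QQ$. The main obstacle I anticipate is the bookkeeping around the boundedness reduction and the passage between $\NN^d$ and $\ZZ^d$: one must be careful that the eventual finiteness of $|S_n|$ genuinely implies $\C$ is bounded (not merely that $n\C$ misses lattice points for one bad $n$), and that lattice points on lower-dimensional faces, which Lemma \ref{coconvex} implicitly discards, do not contribute to the degree-$d$ term. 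Both are routine once phrased correctly, and the core of the argument is the identity $\conv(I^n) = n\conv(I)$ plus Lemma \ref{coconvex}.
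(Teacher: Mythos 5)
Your proposal is correct and follows essentially the same route as the paper: identify $\Gamma = \bigcap_i \conv(I_i)$ and $\Gamma_j = \Gamma \cap \conv(J_j)$, observe $S_n = \NN^d \cap n\C$ with $\C = \Gamma \setminus \bigcup_j \Gamma_j$, and invoke Lemma~\ref{coconvex}. You supply some details the paper leaves implicit (that eventual finiteness of $|S_n|$ forces $\C$ bounded, that $\NN^d\cap n\C = \ZZ^d\cap n\C$ since $\C$ lies in the nonnegative orthant, and that $\vol(\C)$ is rational by inclusion--exclusion over rational polytopes), but the underlying argument is the same.
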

\begin{proof}
Let $\Gamma:=\cap_{i=1}^r \conv(I_i)$ and $\Gamma_i:=\Gamma \cap \conv(J_i)$ for every $1\ls i\ls s$ (see Example \ref{exofPol}).  Notice that $\fa\in S_n$ if and only if $\fa\in n\C:=n\Gamma\setminus \cup_{i=1}^s n\Gamma_i$ and by assumption $\C$ is bounded. The result follows by Lemma \ref{coconvex}.
\end{proof} 

\vspace{2mm}
\begin{example}\label{exofPol}
Let $R=k[x,y]$, $I_1=(xy^5, x^4y^4, x^5y)$, $J_1=(xy^8, x^3y^6)$, $J_2=(x^6y^4, x^7y^3)$, and $J_3=(x^9y, x^{10})$. Following the notation in Lemma \ref{limIdeanAvoid}, in Figure \ref{Fig} we can observe that $\cup_{i=1}^3 n\Gamma_i$ covers the yellow unbounded region, $\C$ is the green bounded region, and
$\Gamma$ is the union of these two. We also note that the point $(4,4)$ is in the interior of $\conv(I_1)$ though it corresponds to a minimal generator of $I_1$. The dotted segment represents the bounded facet of $\conv(J_3)$.
\end{example}

\begin{figure}
\centering
\includegraphics[scale=.3]{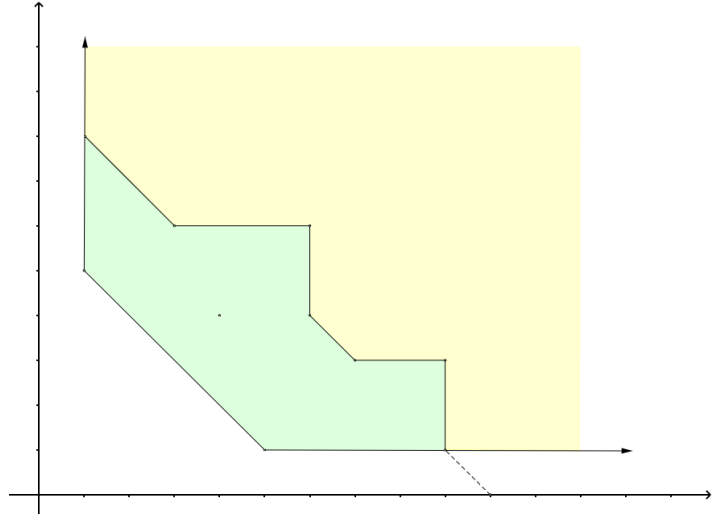}
\caption{Regions corresponding to the ideals in Example \ref{exofPol}.} 
 \label{Fig}
\end{figure}

We are now ready to present the main theorem of this section.

\begin{thm}\label{mainNormal}
 Let $I$ be a monomial ideal.  Assume $\lambda(\HH{i}{\fm}{R/\ol{I^n}})<\infty$ for $n\gg 0$, then the limit
$$\lim_{n\rightarrow \infty}\frac{\lambda(\HH{i}{\fm}{R/\ol{I^n}})}{n^d}$$
exists and is a rational number.
\end{thm}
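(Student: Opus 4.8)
The plan is to mirror the structure of the proof of Theorem \ref{mainMono}, but with the integrally closed powers $\ol{I^n}$ playing the role of the $I_n$, so that Takayama's formula decomposes the length into a finite sum indexed by subcomplexes $\Delta'$ of $\Delta(I)$, and then to apply Lemma \ref{limIdeanAvoid} (in place of Lemma \ref{bt}) to each term. First I would fix $n\gg 0$ so that $\lambda(\HH{i}{\fm}{R/\ol{I^n}})<\infty$; then every $\fa$ with $\HH{i}{\fm}{R/\ol{I^n}}_{\fa}\neq 0$ lies in $\NN^d$ by \cite[Proposition 1]{Tak}. Since $\sqrt{\ol{I^n}}=\sqrt{I}$, the complex $\Delta(\ol{I^n})=\Delta(I)$ is independent of $n$, and every $\Delta_{\fa}(\ol{I^n})$ is a subcomplex of $\Delta(I)$. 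By Theorem \ref{Takayama},
\begin{align*}
\lambda(\HH{i}{\fm}{R/\ol{I^n}})
&= \sum_{\fa\in \NN^d} \dim_k \tilde{H}_{i-1}(\Delta_{\fa}(\ol{I^n}), k) \\
&= \sum_{\Delta'\subseteq \Delta(I)} \dim_k \tilde{H}_{i-1}(\Delta', k)\, g_{\Delta'}(n),
\end{align*}
where $g_{\Delta'}(n) = \#\{\fa\in\NN^d \mid \Delta_{\fa}(\ol{I^n}) = \Delta'\}$.

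The key point is that each counting function $g_{\Delta'}(n)$ fits the hypotheses of Lemma \ref{limIdeanAvoid}. Unwinding the definition of $\Delta_{\fa}$, the condition $\Delta_{\fa}(\ol{I^n})=\Delta'$ is, for $\fa\in\NN^d$, a finite boolean combination of membership conditions of the form $\fx^{\fa}\in ((\ol{I^n}))_{F\cup G}$ and $\fx^{\fa}\notin ((\ol{I^n}))_{F\cup G}$ for various faces and non-faces $F$ of $\Delta'$ (with $G=G_\fa=\emptyset$ here since $\fa\in\NN^d$). So I need two auxiliary facts: that $(\ol{I^n})_F = \ol{(I_F)^n}$ for every $F\subseteq[d]$, and that $\ol{(I_F)^n}=\ol{(I_F)}^{\,n}$-type behavior is exactly what Lemma \ref{limIdeanAvoid} is built to handle, i.e. that membership $\fx^{\fa}\in\ol{(I_F)^n}$ is governed by $\fa\in n\,\conv(I_F)$. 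The second is immediate from $\NN^d\cap\conv(J)=\{\fa\mid\fx^{\fa}\in\ol{J}\}$ together with $\conv(J^n)=n\,\conv(J)$. The first — commutation of integral closure of powers with the specialization map $\pi_F$ — should follow either from the valuative/Newton-polyhedron description of integral closure of monomial ideals ($\ol{I}$ corresponds to the lattice points of $\conv(I)$, and setting $x_i=1$ projects $\conv(I)$ onto $\conv(I_F)$ compatibly) or directly from Lemma \ref{interFs}(2) applied to a monomial primary decomposition. Once these are in hand, $g_{\Delta'}(n)$ is exactly an $|S_n|$-type function as in Lemma \ref{limIdeanAvoid} (after intersecting/complementing over the finitely many faces and non-faces of $\Delta'$, which is allowed since a finite boolean combination of such sets is again of the form $\{\fa\in\NN^d\mid \fx^\fa\in\ol{I_i^n}\ \forall i,\ \fx^\fa\notin\ol{J_j^n}\ \forall j\}$ up to a finite disjoint union), and its hypothesis $|S_n|<\infty$ holds for $n\gg0$ because $g_{\Delta'}(n)\le \lambda(\HH{i}{\fm}{R/\ol{I^n}})$ whenever $\tilde H_{i-1}(\Delta',k)\neq 0$, while the $\Delta'$ with $\tilde H_{i-1}(\Delta',k)=0$ contribute nothing. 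Hence $\lim_{n\to\infty} g_{\Delta'}(n)/n^d$ exists and is rational for each relevant $\Delta'$, and summing the finitely many terms with coefficients $\dim_k\tilde H_{i-1}(\Delta',k)\in\NN$ gives the result.

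The step I expect to be the main obstacle is the bookkeeping needed to write $g_{\Delta'}(n)$ honestly in the form required by Lemma \ref{limIdeanAvoid}: the raw description "$\Delta_{\fa}(\ol{I^n})=\Delta'$" is an equality of simplicial complexes, which must be expanded into "$F\in\Delta_\fa$ for all $F\in\Delta'$ and $F\notin\Delta_\fa$ for all $F\notin\Delta'$", i.e. a conjunction of membership/non-membership conditions over the faces and the (finitely many minimal) non-faces of $\Delta'$, and one must then observe that a finite boolean combination of sets of the type appearing in Lemma \ref{limIdeanAvoid} decomposes into a finite disjoint union of such sets, so that the limit and its rationality pass through finite sums. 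Here one also has to handle the subtlety, already flagged after Theorem \ref{Takayama}, that Takayama's formula is stated with side conditions on $\fa$ whose failure forces the relevant reduced homology to vanish; since we are only summing over $\fa\in\NN^d$ this causes no trouble, but it should be noted explicitly. A secondary, more routine, point is verifying $(\ol{I^n})_F=\ol{(I_F)^n}$; I would isolate this as a short lemma, proved via the Newton-polyhedron description of the monomial integral closure and the fact that $\pi_F$ corresponds to the linear projection $\RR^d\to\RR^{d-|F|}$ forgetting the coordinates in $F$, under which $\conv(I)$ maps onto $\conv(I_F)$.
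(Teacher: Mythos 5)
Your proof is correct and is essentially the paper's argument: reduce via Takayama's formula to a finite sum over subcomplexes $\Delta'\subseteq\Delta(I)$ of counting functions $f_{\Delta'}(n)=\#\{\fa\in\NN^d\mid \Delta_\fa(\ol{I^n})=\Delta'\}$, then apply Lemma \ref{limIdeanAvoid} to each, using $(\ol{I^n})_F=\ol{(I_F)^n}$ (which the paper cites from \cite[1.1.4(2)]{HS}). The only place the paper is slicker is in observing that, because $\Delta_\fa$ is a simplicial complex, $\Delta_\fa(\ol{I^n})=\Delta'$ is equivalent to the single conjunction "$\fx^\fa\not\in\ol{I^n_F}$ for each facet $F$ of $\Delta'$ and $\fx^\fa\in\ol{I^n_G}$ for each minimal non-face $G$ of $\Delta'$", so the set being counted is already literally of the form in Lemma \ref{limIdeanAvoid} and no boolean/disjoint-union bookkeeping is needed.
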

\begin{proof}
 Proceeding as in Theorem \ref{mainMono} we obtain 
$$\lambda(\HH{i}{\fm}{R/\ol{I^n}}) = \sum_{\Delta'\subseteq \Delta(I)} \dim_k  \tilde{H}_{i-1}(\Delta', k)f_{\Delta'}(n).$$
where $f_{\Delta'}(n) = \#\{\fa\in \NN^d\mid \Delta_{\fa}(\ol{I^n})=\Delta'\}.$
We now show that for each $\Delta'$ such that $\tilde{H}_{i-1}(\Delta', k)\neq 0$ the limit $\lim_{n\rightarrow \infty}\frac{f_{\Delta'}(n)}{n^d}$ exists and is rational, which will finish the proof. Indeed, by assumption for any such $\Delta'$ we must have $f_{\Delta'}(n)<\infty$ for $n\gg 0$. Moreover, we have that $\Delta_{\fa}(\ol{I^n}) = \Delta'$ if and only if $\fx^\fa\not \in \ol{I^n}_F$, for every facet $F$ of $\Delta'$, and $\fx^\fa\in \ol{I^n}_G$, for every minimal non-face $G$ of $\Delta'$. The theorem then follows by Lemma \ref{limIdeanAvoid} since $\ol{I^n}_F=\ol{I^n_F}$ and $\ol{I^n}_G=\ol{I^n_G}$ for every $F$ and $G$ (see \cite[1.1.4(2)]{HS}).

\end{proof}

A monomial  ideal $I$ is {\it locally normal} if $I_{\{i\}}$ is normal for every $i$, i.e., $ I_{\{i\}}^n=\ol{I_{\{i\}}^n}$ for every $i$ and $n$.

\begin{cor}\label{corLocNorm}
 Let $I$ be a locally normal monomial ideal. Assume $\lambda(\HH{i}{\fm}{R/I^n})<\infty$ for $n\gg 0$, then the limit
$$\lim_{n\rightarrow \infty}\frac{\lambda(\HH{i}{\fm}{R/I^n})}{n^d}$$
exists and is a rational number. 
\end{cor}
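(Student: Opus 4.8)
The plan is to deduce Corollary \ref{corLocNorm} from Theorem \ref{mainNormal} by showing that the hypothesis of local normality allows us to replace the ordinary powers $I^n$ by the integral closures $\ol{I^n}$ without changing the relevant local cohomology modules in positive degrees $i>0$. Recall from Theorem \ref{Takayama} that $\dim_k \HH{i}{\fm}{R/J}_{\fa}$ depends only on the simplicial complexes $\Delta_{\fa}(J)$, which in turn (by the description preceding Theorem \ref{Takayama} together with Lemma \ref{interFs}) are determined by the ideals $J_{F}$ obtained by setting the variables indexed by $F$ equal to $1$. So the key claim is: if $I$ is locally normal, then for every $i>0$, every $n$, and every $\fa\in\ZZ^d$, we have $\Delta_{\fa}(I^n) = \Delta_{\fa}(\ol{I^n})$ whenever $|G_{\fa}|<i$ (which is the only case contributing to $\HH{i}{\fm}{-}_{\fa}$), and more precisely the two complexes have the same reduced homology in degree $i-|G_{\fa}|-1$.

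First I would unwind the definition of $\Delta_{\fa}(J)$: a face $F\subseteq [d]\setminus G_{\fa}$ belongs to $\Delta_{\fa}(J)$ iff $\fx^{\fa^+}\notin J_{F\cup G_{\fa}}$. Applying $\pi_{F\cup G_{\fa}}$ factors through the composition of single-variable specializations, and by Lemma \ref{interFs}(1) together with the fact that specialization commutes with integral closure (\cite[1.1.4(2)]{HS}), one has $(\ol{I^n})_{F\cup G_{\fa}} = \ol{(I^n)_{F\cup G_{\fa}}} = \ol{(I_{F\cup G_{\fa}})^n}$. Now the point is that when $F\cup G_{\fa}$ is \emph{nonempty}, the ideal $I_{F\cup G_{\fa}}$ lives in a polynomial ring in fewer variables and, by iterating local normality, $I_{F\cup G_{\fa}}$ is normal, so $(\ol{I^n})_{F\cup G_{\fa}} = (I^n)_{F\cup G_{\fa}}$ and membership of $\fx^{\fa^+}$ is unaffected. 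Thus $\Delta_{\fa}(I^n)$ and $\Delta_{\fa}(\ol{I^n})$ agree on all faces $F$ with $F\cup G_{\fa}\neq\emptyset$; they can differ only on the empty face, i.e., only in whether $\emptyset$ itself is a face, which happens iff $G_{\fa}=\emptyset$ and $\fx^{\fa^+}=\fx^{\fa}\notin I^n$ (resp.\ $\notin\ol{I^n}$). When $G_{\fa}=\emptyset$, the relevant homological degree for $\HH{i}{\fm}{-}_{\fa}$ is $i-1$, and since $i>0$, the two complexes $\Delta_{\fa}(I^n)$ and $\Delta_{\fa}(\ol{I^n})$ either coincide or differ exactly by the presence of the empty face, which affects only the augmentation and hence only reduced homology in degree $-1$; for $i-1\geq 0$ the reduced homology is therefore the same. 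Consequently $\lambda(\HH{i}{\fm}{R/I^n}) = \lambda(\HH{i}{\fm}{R/\ol{I^n}})$ for all $n$ and all $i>0$.

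Given this identification, the corollary is immediate: the hypothesis $\lambda(\HH{i}{\fm}{R/I^n})<\infty$ for $n\gg0$ is equivalent to $\lambda(\HH{i}{\fm}{R/\ol{I^n}})<\infty$ for $n\gg0$, and then Theorem \ref{mainNormal} gives that the limit $\lim_{n\to\infty}\lambda(\HH{i}{\fm}{R/\ol{I^n}})/n^d = \lim_{n\to\infty}\lambda(\HH{i}{\fm}{R/I^n})/n^d$ exists and is rational. I would also note in passing that the case $i=0$ is not claimed here and indeed is covered separately by the Herzog--Puthenpurakal--Verma result cited in the introduction.

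The main obstacle I anticipate is the bookkeeping around the empty-face discrepancy and making sure the restriction $i>0$ is used correctly: one must be careful that $\Delta_{\fa}(\ol{I^n})$ is genuinely a \emph{subcomplex} obtained from $\Delta_{\fa}(I^n)$ only by possibly deleting $\emptyset$, never any positive-dimensional face, and that deleting the empty face changes $\tilde H_{-1}$ but nothing in non-negative degrees. A secondary technical point is justifying that local normality of $I$ (normality of each $I_{\{i\}}$) implies normality of $I_{F}$ for \emph{every} nonempty $F$; this follows because $\pi_F = \pi_{\{i_1\}}\circ\cdots\circ\pi_{\{i_k\}}$ for $F=\{i_1,\dots,i_k\}$, specialization of a normal monomial ideal at one more variable is again normal (again \cite[1.1.4(2)]{HS} or a direct Newton-polyhedron argument: $\conv(I_{\{j\}}) = \pi_{\{j\}}(\conv(I))$ and normality is the statement $\ZZ^d\cap\conv(I) $ generates, which is inherited under coordinate projection of the recession-cone structure), so one proceeds by induction on $|F|$. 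Once these two points are settled, everything else is a direct citation of the already-established Theorem \ref{mainNormal}.
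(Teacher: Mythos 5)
Your proof is correct, but it takes a different and noticeably more elaborate route than the paper's. The paper's proof is three lines: for $i\gs 1$, it observes that local normality forces $\lambda(\ol{I^n}/I^n)<\infty$ (localize at any monomial prime $\fp\neq\fm$ avoiding some $x_j$; since $I_{\{j\}}$ is normal, $I^n$ and $\ol{I^n}$ agree after inverting $x_j$, so $\ol{I^n}/I^n$ is supported only at $\fm$), and then the long exact sequence of local cohomology applied to $0\to\ol{I^n}/I^n\to R/I^n\to R/\ol{I^n}\to 0$ immediately gives $\HH{i}{\fm}{R/I^n}\cong\HH{i}{\fm}{R/\ol{I^n}}$ for all $i\gs 1$, after which Theorem \ref{mainNormal} applies; the $i=0$ case is handled by the cited result of Herzog--Puthenpurakal--Verma, exactly as you note. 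You instead reprove this isomorphism of local cohomology modules (at the level of graded dimensions) combinatorially through Takayama's formula, showing that the complexes $\Delta_{\fa}(I^n)$ and $\Delta_{\fa}(\ol{I^n})$ can differ at most in the presence of the empty face, which only changes $\tilde{H}_{-1}$ and hence only $i=0$. Your inductive lemma --- that local normality of $I$ (normality of each $I_{\{i\}}$) propagates to normality of $I_F$ for every nonempty $F$ via the fact that $\pi_{\{j\}}$ commutes with integral closure --- is correct and is, at bottom, the same geometric fact the paper uses to conclude $\ol{I^n}/I^n$ is $\fm$-torsion. The two arguments thus rest on the same underlying observation; the paper's packages it through a short exact sequence and Grothendieck's vanishing for finite-length modules, which is considerably more economical, while yours has the minor advantage of making fully explicit the equality $\dim_k\HH{i}{\fm}{R/I^n}_{\fa}=\dim_k\HH{i}{\fm}{R/\ol{I^n}}_{\fa}$ degree by degree and of staying entirely inside the combinatorial framework already set up for Theorems \ref{mainMono} and \ref{mainNormal}.
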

\begin{proof}
The limit exists for $i=0$ by \cite[2.6]{HPV}, then we can assume $i\gs 1$. By assumption we have $\ll(\ol{I^n}/I^n)<\infty$ for every $n\gs 1$, therefore $\HH{i}{\fm}{R/I^n}\cong \HH{i}{\fm}{R/\ol{I^n}}$. The conclusion now follows by Theorem \ref{mainNormal}.
\end{proof}

\begin{example}
From Example \ref{P5example} we observe that if $I=(x_1x_2,x_2x_3,x_3x_4,x_4x_5)$ then $I$ is normal by \cite{SVV} and $\lim_{n\rightarrow \infty}\frac{\lambda(\HH{1}{\fm}{R/I^n})}{n^5}=\frac{1}{240}.$
\end{example}

\vspace{3mm}

\subsection{Limit for edge ideals}
Let $\G$ be a graph. We denote by $V(\G)$ the vertex set of $\G$. For every subset $S$ of $V(\G)$ we denote by $\adj(S)$ the set of vertices not in $S$ that are adjacent to a vertex of $S$. We denote by $\st(S):=S\cup \adj(S)$, the {\it star} of $S$, and by $\G\setminus S$ the induced subgraph of $\G$ whose vertex set is $V(\G)\setminus S$.

Assume $d\gs 3$ and let $\G$ be a graph such that $V(\G)=[d]$. We denote by $I(\G)$ the {\it edge ideal} of $\G$, i.e., the monomial ideal generated by $\{x_ix_j\mid ij \text{ is an edge of }\G.\}$ 

For every $x$, let $\G_x$ be the graph obtained from $\G\setminus \st(x)$ by removing all of its isolated vertices. Notice that $I_{\{i\}}=I(\G_{x_i})+\sum_{x_j\in \adj(x_i)}(x_j)$ for every $1\ls i\ls d$. 

\begin{prop}\label{finiteLength}
Let $\G$ be a graph with no isolated vertices  and $I:=I(\G)$ its edge ideal, then $\lambda(\HH{1}{\fm}{R/I^n})<\infty$ for $n\gg 0$ if and only if for every $i\in [d]$ at least one of the following two conditions holds
\begin{enumerate}
\item[$(i)$]$\G\setminus \st(x_i)$ has  isolated vertices, or
\item[$(ii)$]  the graph $\G_{x_i}$ has at least one bipartite connected component.
\end{enumerate} 
\end{prop}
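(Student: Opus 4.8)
The strategy is to reduce the finiteness of $\lambda(\HH{1}{\fm}{R/I^n})$ to a local statement at each coordinate prime, using Theorem~\ref{GFT} (Grothendieck's Finiteness Theorem) as applied in Proposition~\ref{anotherGFT} and Corollary~\ref{corH2}(b). Recall from Corollary~\ref{corH2}(b) that $\HH{2}{\fm}{I^n}$ is finitely generated for $n\gg0$ iff $\Ass(R/I^n)$ contains no primes of dimension one; and since $\depth R/I^n\gs 1$ once $I$ has no isolated vertices (so $\fm\notin \Ass R/I^n$), the long exact sequence gives $\HH{1}{\fm}{R/I^n}\cong \HH{2}{\fm}{I^n}$. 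Thus $\lambda(\HH{1}{\fm}{R/I^n})<\infty$ for $n\gg0$ is equivalent to: for every $\fp$ with $\dim R/\fp=1$, $\fp\notin\Ass(R/I^n)$ for $n\gg0$. By the combinatorics of edge ideals, the only dimension-one monomial primes that can appear are the coordinate primes $\fp_i=(x_j: j\neq i)$, so it suffices to control $\fp_i\in\Ass(R/I^n)$ for each $i$.

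Next I would localize at $\fp_i$, which amounts to setting $x_i=1$. As noted in the excerpt, $I_{\{i\}}=I(\G_{x_i})+\sum_{x_j\in\adj(x_i)}(x_j)$ (after cleaning up isolated vertices), and $R_{\fp_i}/I^n_{\fp_i}$ corresponds to $(R/\text{(variables adjacent to }x_i))$ modulo the $n$-th power of the edge ideal of $\G_{x_i}$. The condition $\fp_i\notin\Ass(R/I^n)$ for $n\gg0$ then translates to a depth condition on $R'/I(\G_{x_i})^n$ where $R'$ is the polynomial ring on the non-isolated vertices of $\G\setminus\st(x_i)$: specifically, $\depth R'/I(\G_{x_i})^n \gs 1$ for $n\gg0$, i.e., the irrelevant ideal of $R'$ is not associated to $I(\G_{x_i})^n$. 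Here condition $(i)$ — that $\G\setminus\st(x_i)$ has isolated vertices — forces $R'$ to be a proper subring and makes $\fp_i$ visibly non-maximal in the relevant quotient, so the dimension-one obstruction disappears automatically; this case should be immediate.

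The substantive case is when $\G\setminus\st(x_i)$ has no isolated vertices, so $\G_{x_i}=\G\setminus\st(x_i)$ and $R'$ is the full polynomial ring on those vertices with irrelevant ideal $\fm'$. Then the question becomes: when is $\fm'\notin\Ass(R'/I(\G_{x_i})^n)$ for $n\gg0$, equivalently $\HH{0}{\fm'}{R'/I(\G_{x_i})^n}$ eventually zero? By the stabilization of $\Ass(R/J^n)$ (Brodmann), this is governed by whether $\fm'$ lies in the stable value $\Ass^\infty$. I would invoke the known characterization that for an edge ideal $I(H)$, the irrelevant ideal belongs to $\Ass(R/I(H)^n)$ for some $n$ precisely when $H$ is \emph{not} bipartite (this is a theorem on the persistence/stability of associated primes of edge ideals; concretely, $\fm'\in\Ass^\infty(R'/I(H)^n)$ iff $H$ has an odd cycle as an induced-enough subgraph, and if $H$ is bipartite then $R'/I(H)^n$ has positive depth for all $n$ because $I(H)$ is normally torsion-free). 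Applying this componentwise to $\G_{x_i}$ — a disjoint union of its connected components, for which the local cohomology splits as a tensor/product — gives that $\HH{0}{\fm'}{R'/I(\G_{x_i})^n}$ is eventually zero iff \emph{every} connected component of $\G_{x_i}$ is non-bipartite; equivalently, it is eventually nonzero iff \emph{some} component is bipartite, which is condition $(ii)$. Wait — I must be careful with the direction: a disjoint union $H=H_1\sqcup H_2$ has $R'/I(H)^n = \bigotimes (R_k/I(H_k)^{?})$-type behavior only after accounting for the Künneth-style formula; the correct statement is $\HH{0}{\fm'}{R'/I(H)^n}=0$ iff $\HH{0}{\fm_k'}{R_k'/\text{(appropriate ideal)}}=0$ for all $k$, and a single bipartite component kills the global socle unless the others contribute — so the clean outcome is that $(ii)$ ``$\G_{x_i}$ has a bipartite component'' is exactly the condition making $\fp_i$ eventually non-associated. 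Assembling: $\lambda(\HH{1}{\fm}{R/I^n})<\infty$ for $n\gg0$ iff for each $i$ either $(i)$ or $(ii)$ holds.

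\textbf{Main obstacle.} The delicate point is the precise bookkeeping in the localization step: identifying exactly which quotient ring and which ideal one gets after setting $x_i=1$ and discarding isolated vertices, and then correctly combining the disjoint-components behavior so that ``bipartite component exists'' comes out as the right Boolean condition rather than ``all components bipartite.'' The input that non-bipartite edge ideals eventually have the maximal ideal associated (while bipartite ones never do) is the known ingredient I would cite rather than reprove; everything else is the careful translation through Grothendieck finiteness and Brodmann stabilization.
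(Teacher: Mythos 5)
Your skeleton matches the paper's: reduce via the long exact sequence and Corollary~\ref{corH2}(b) to the statement that no one-dimensional prime is eventually associated to $R/I^n$, then handle each coordinate prime $\fp_i$ by applying $\pi_{\{i\}}$ via Lemma~\ref{interFs}. At that point the paper simply quotes Lam--Trung \cite[3.4]{LT}, a characterization of when the maximal ideal of $R_{\{i\}}$ is asymptotically associated to powers of an ideal of the exact form $I(\G_{x_i}) + \sum_{x_j\in\adj(x_i)}(x_j)$, and this gives conditions $(i)$ and $(ii)$ immediately.

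Your version of the final step has two genuine gaps. First, you assert that $R_{\fp_i}/I^n_{\fp_i}$ ``corresponds to $R'$ modulo the $n$-th power of $I(\G_{x_i})$,'' where $R'$ lives on the non-isolated vertices of $\G\setminus\st(x_i)$. This is false: the localized ideal is $J := I(\G_{x_i}) + (\adj(x_i))$, and $J^n = \sum_{k} I(\G_{x_i})^{n-k}(\adj(x_i))^k$, which is neither $I(\G_{x_i})^n$ nor its image after killing the linear forms. The statement that the maximal ideal of $R_{\{i\}}$ is asymptotically associated to $J^n$ iff the maximal ideal of $R'$ is asymptotically associated to $I(\G_{x_i})^m$ for some $m$ is in fact true, but it requires a socle analysis of $J^n$ (splitting by the power of the linear part and detecting socle elements in $I(\G_{x_i})^{m-1}/I(\G_{x_i})^m$) that you do not supply. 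Second, you try to assemble the disconnected case from the connected non-bipartite result by a ``Künneth-style'' argument and you yourself flag that you are unsure of the direction. The socle condition for powers of $I(H_1\sqcup H_2)$ is not a product of socles: it asks for simultaneous socle elements in the graded pieces $I(H_k)^{a_k}/I(H_k)^{a_k+1}$ ranging over all splittings $\sum a_k=n-1$, so deriving ``some component bipartite'' as the correct Boolean takes a real argument. The paper sidesteps both issues by invoking \cite[3.4]{LT}, which is formulated precisely for the mixed ideal $J$ and already incorporates the disconnected and isolated-vertex bookkeeping; if you want to avoid that reference you must carry out both reductions explicitly.
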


\begin{proof}
Since $d\gs 3$, from the exact sequence $0\rightarrow I^n \rightarrow R \rightarrow R/I^n \rightarrow0$ we obtain $\HH{1}{\fm}{R/I^n} \cong  \HH{2}{\fm}{I^n}$ for every $n\gs 1$. Hence, by Corollary \ref{corH2}, we have that $\lambda(\HH{1}{\fm}{R/I^n})<\infty$ for $n\gg 0$ if and only if the monomial prime ideal generated by $V_i:=V(\G)\setminus \{i\}$ is not an associated prime of $R/I^n$ for $n\gg 0$ and every $i$. By Lemma \ref{interFs} the latter holds if and only if the maximal ideal of $R_{\{i\}}:=k[\{x_j\mid j\neq i\}]$ is not an associated prime of $I(\G_{x_i})+\sum_{x_j\in \adj(x_i)}(x_j)$ for every $i$. By \cite[3.4]{LT} this is equivalent to $\G_{x_i}$ having bipartite components or $V(\G_{x_i})\cup \{j\mid x_j\in \adj(x_i)\}\neq [d]\setminus \{i\}$, i.e., $\G\setminus \st(x_i)$ having isolated vertices.
\end{proof}

\begin{cor}\label{locallyBipartite}
Let $\G$ be a graph with no isolated vertices  and $I:=I(\G)$ its edge ideal. Assume $\G_{x_i}$ is bipartite for every $i\in [d]$,  then the limit
$$\lim_{n\rightarrow \infty}\frac{\lambda(\HH{1}{\fm}{R/I^n})}{n^d}$$
exists and is a rational number.
\end{cor}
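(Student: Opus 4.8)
The plan is to reduce the statement to the locally normal case already settled in Corollary \ref{corLocNorm}: it suffices to show that $\lambda(\HH{1}{\fm}{R/I^n})<\infty$ for $n\gg 0$ and that the edge ideal $I=I(\G)$ is locally normal, i.e.\ that $I_{\{i\}}$ is a normal monomial ideal for every $i\in[d]$. The finiteness follows at once from Proposition \ref{finiteLength}: for each $i$ the graph $\G_{x_i}$ is bipartite, so either it is nonempty, in which case it has a bipartite connected component and condition $(ii)$ holds, or it is empty, in which case $\G\setminus\st(x_i)$ has only isolated vertices and condition $(i)$ holds.

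For local normality, recall $I_{\{i\}}=I(\G_{x_i})+\sum_{x_j\in\adj(x_i)}(x_j)$, an ideal of $R_{\{i\}}$. By \cite{SVV} the edge ideal $I(\G_{x_i})$ of the bipartite graph $\G_{x_i}$ is normal, and the variables $x_j$ with $x_j\in\adj(x_i)$ lie in a set of indeterminates disjoint from those appearing in $I(\G_{x_i})$; variables of $R_{\{i\}}$ occurring in no generator are harmless, since the integral closure of a monomial ideal is compatible with polynomial extension (cf.\ \cite{HS}). So everything comes down to the following purely monomial statement: \emph{if $L$ is a normal monomial ideal of $k[\fx]$ and $\mathbf{y}$ is a disjoint set of variables, then $Lk[\fx,\mathbf{y}]+(\mathbf{y})$ is normal.} Granting this, each $I_{\{i\}}$ is normal, $I$ is locally normal, and Corollary \ref{corLocNorm} gives the conclusion.

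This monomial statement is the main obstacle, and I would prove it using Newton polyhedra. Put $L'=Lk[\fx,\mathbf{y}]+(\mathbf{y})$ and write $|\fb|=b_1+\cdots+b_m$ for $\fb\in\NN^m$. Computing $\conv(L')$, and hence $n\conv(L')$, one finds that $\fx^{\fa}\mathbf{y}^{\fb}\in\ol{(L')^{n}}$ if and only if $|\fb|\gs n$, or $|\fb|<n$ and $\fa\in(n-|\fb|)\conv(L)$. On the other hand, $\mathbf{y}^{\fb}\in(\mathbf{y})^{c}$ exactly when $|\fb|\gs c$, while normality of $L$ gives $\fx^{\fa}\in L^{c}$ exactly when $\fa\in c\conv(L)$; distributing the $n$ factors of $(L')^{n}$ between $L$ and $(\mathbf{y})$, optimally placing $\min(|\fb|,n)$ of them into $(\mathbf{y})$ — which is best since $c\conv(L)$ shrinks as $c$ grows — shows $\fx^{\fa}\mathbf{y}^{\fb}\in(L')^{n}$ under the very same numerical condition. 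Hence $\ol{(L')^{n}}=(L')^{n}$ for all $n$, i.e.\ $L'$ is normal. What makes the convex-geometric and algebraic descriptions coincide with no loss is precisely that $|\fb|$ is an integer, so no rounding occurs in passing between them.
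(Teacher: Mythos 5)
Your proof follows the same reduction as the paper's: deduce finiteness from Proposition~\ref{finiteLength}, show $I$ is locally normal, and invoke Corollary~\ref{corLocNorm}. Where you go further is that the paper dispatches local normality with a terse appeal to \cite{SVV}, whose normality theorem applies directly only to the edge ideal $I(\G_{x_i})$ of a bipartite graph, not to the full localization $I_{\{i\}}=I(\G_{x_i})+\sum_{x_j\in\adj(x_i)}(x_j)$. You identify and prove the missing ingredient: that if $L$ is a normal monomial ideal in $k[\fx]$ and $\fy$ is a disjoint set of variables, then $Lk[\fx,\fy]+(\fy)$ is again normal, along with the (easier) compatibility with harmless extra variables. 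Your Newton-polyhedron computation is correct: both $\fx^{\fa}\fy^{\fb}\in (L')^n$ and $\fx^{\fa}\fy^{\fb}\in \ol{(L')^n}$ reduce to the single condition $|\fb|\gs n$, or else $\fa\in (n-|\fb|)\conv(L)$, and since $n-|\fb|$ is an integer, normality of $L$ closes the loop without any rounding loss. So you're doing the same thing the authors do, but filling in a step they leave implicit. One small point worth tightening in the finiteness part: when $\G_{x_i}$ is empty, your phrase \emph{$\G\setminus\st(x_i)$ has only isolated vertices} would need $\G\setminus\st(x_i)$ to be nonempty for condition~(i) to hold; if $\st(x_i)=[d]$ neither condition of Proposition~\ref{finiteLength} is satisfied. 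This edge case is really an issue with the corollary's hypothesis (the paper implicitly wants $\G_{x_i}$ nonempty, cf.\ the explicit $\height I>1$ hypothesis in Proposition~\ref{bipH1pos}), so you inherit it from the source rather than create it, but it is worth a remark.
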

\begin{proof}
From Proposition \ref{finiteLength} it follows that $\lambda(\HH{1}{\fm}{R/I^n})<\infty$ for $n\gg 0$. By \cite{SVV} and the assumption the ideal $I$ is locally normal. Hence, the conclusion follows by Corollary \ref{corLocNorm}.
\end{proof}

\begin{prop}\label{bipH1pos}
Let $\G$ be a bipartite connected graph and $I:=I(\G)$ its edge ideal. Assume $\height I>1$, then 
$$\lim_{n\rightarrow \infty}\frac{\lambda(\HH{1}{\fm}{R/I^n})}{n^d}>0$$
exists and is a positive rational number.
\end{prop}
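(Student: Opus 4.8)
The plan is to combine the previous results of the section with a positivity argument. First I would note that since $\G$ is bipartite and connected with $\height I(\G) > 1$, the ideal $I = I(\G)$ is normal by \cite{SVV}, so by Corollary \ref{corLocNorm} (or directly Theorem \ref{mainNormal}) the relevant limit $\lim_{n\to\infty} \lambda(\HH{1}{\fm}{R/I^n})/n^d$ exists and is a rational number, \emph{provided} we also verify that $\lambda(\HH{1}{\fm}{R/I^n}) < \infty$ for $n \gg 0$. So the first task is to check this finiteness, which should follow from Proposition \ref{finiteLength}: for a connected bipartite graph, each graph $\G_{x_i}$ is itself bipartite (as an induced subgraph of a bipartite graph), so condition $(ii)$ holds for every $i$. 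Hence the limit exists and is a nonnegative rational; the real content is showing it is \emph{positive}.

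For positivity, the key is to produce, for each $n \gg 0$, a region of $\NN^d$ of volume growing like $n^d$ on which $\HH{1}{\fm}{R/I^n}_{\fa} \neq 0$. Via Takayama's formula (Theorem \ref{Takayama}), $\HH{1}{\fm}{R/I^n}_{\fa} \neq 0$ with $\fa \in \NN^d$ exactly when $\tilde H_0(\Delta_\fa(I^n), k) \neq 0$, i.e.\ when $\Delta_\fa(I^n)$ is a disconnected complex with support $[d]$. Concretely, using Remark \ref{otherInt} and the translation from Proposition \ref{finiteLength}, one wants to exhibit a full-dimensional rational polyhedral cone's worth of $\fa$ for which $\fx^{\fa} \in I^n_F$ forces the simplicial complex to break into $\geq 2$ connected components. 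The natural choice: since $\height I > 1$, the graph $\G$ has at least two edges and in particular (being bipartite connected with $\geq 3$ vertices) there is a vertex whose deletion or whose local structure forces a Takayama splitting. I would pick a specific vertex $i$ and a specific splitting of $[d]\setminus\{i\}$ coming from the bipartition, and write down explicit linear inequalities on $\fa = (a_1,\dots,a_d)$ — of the shape $\sum_{j \in A} a_j \geq n$, $\sum_{j \in B} a_j \geq n$, and strict inequalities witnessing that the relevant $\fx^{\fa^+}$ is \emph{not} in the localized ideal — carving out a full-dimensional (hence positive-volume) subset of $n\P$ for a rational polytope $\P$. Then Lemma \ref{coconvex} (or Lemma \ref{limIdeanAvoid}) gives that the count of lattice points grows like $\vol(\P)\, n^d > 0$, so the limit is positive.

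The main obstacle I anticipate is the combinatorial bookkeeping to guarantee the region is genuinely full-dimensional in $\RR^d$ and that on it the Takayama complex $\Delta_\fa(I^n)$ is both disconnected \emph{and} has full support $[d]$ — the latter being exactly what $\tilde H_0 \neq 0$ requires, as opposed to $\Delta_\fa(I^n)$ simply being empty or missing vertices. For a bipartite connected graph the bipartition $V(\G) = X \sqcup Y$ is the natural source of a disconnection: one expects that for $\fa$ with all coordinates large but with $\sum_{j \in X} a_j$ and $\sum_{j \in Y} a_j$ balanced appropriately against $n$, the nonfaces forced by edges (each edge joins $X$ to $Y$) still leave the induced complex splitting along $X$ versus $Y$. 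Making this precise will likely require choosing a distinguished vertex $x_i$ with $\G_{x_i}$ bipartite connected (guaranteed by $\height I > 1$ ensuring $\G$ is not just a single edge plus isolated structure) and running the argument on $R_{\{i\}}/I_{\{i\}}^n$, then transferring back via Remark \ref{otherInt}. I would also double-check the boundary case analysis — that $\height I > 1$ is exactly the hypothesis needed to rule out the degenerate situation where the only associated-prime obstruction is trivial and the length is eventually zero.
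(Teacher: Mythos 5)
Your overall strategy matches the paper's: cite the local normality of bipartite edge ideals and Corollary \ref{corLocNorm}/Corollary \ref{locallyBipartite} to get existence and rationality of the limit, then show positivity by exhibiting a disconnected $\Delta'\subseteq\Delta(I)$ (coming from the bipartition $S_1\sqcup S_2$) for which $f_{\Delta'}(n)$ grows like $n^d$. Two remarks on the setup: the detour you sketch through a distinguished vertex $x_i$, the quotient $R_{\{i\}}/I_{\{i\}}^n$, and Remark \ref{otherInt} is unnecessary. Once $\lambda(\HH{1}{\fm}{R/I^n})<\infty$, all contributing multidegrees $\fa$ lie in $\NN^d$ (Takayama), so you can work directly with $\Delta_\fa(I^n)$ and take $\Delta'$ to be the disjoint union of the two simplexes $F_1$, $F_2$ on the bipartition classes. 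The facets of $\Delta'$ are $F_1$, $F_2$ and its minimal non-faces are exactly the pairs $\{ij\}$ with $i\in S_1$, $j\in S_2$, so $f_{\Delta'}(n)$ counts lattice points satisfying $\fx^\fa\in \bigcap_{i\in S_1,\,j\in S_2}I^n_{ij}$ but $\fx^\fa\notin I^n_{S_1}$ and $\fx^\fa\notin I^n_{S_2}$.

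The genuine gap is precisely the step you flag as the ``main obstacle'': you do not supply the argument that this region is $d$-dimensional. The paper makes this work through two ideas that are missing from your sketch. First, it uses that bipartite edge ideals are normally torsion free (\cite{SVV}), so $\bigcap_{i\in S_1,\,j\in S_2}I^n_{ij}=\bigcap_{P\in\Ass(R/I)\setminus\{P_1,P_2\}}P^n=I^n:(P_1P_2)^\infty$ via Lemma \ref{interFs}(2), turning the counting region into a concrete intersection of half-spaces $H_{P_1}^-\cap H_{P_2}^-\cap\bigcap_P H_P^+$. Second, to see this polytope $\P$ is full-dimensional, the paper chooses a spanning tree $\T$ of $\G$ and takes $\fu$ with $\fx^\fu=\prod_{ij\in\T}x_ix_j$; since $\T$ has $d-1$ edges, $\fu$ and all $\fu+\fe_i$ lie in $dH_{P_1}^-$ and $dH_{P_2}^-$, and the key lemma is that $P_1P_2\fx^\fu\subseteq I^d$, proved by a path-doubling trick: the unique $\T$-path from $i\in S_1$ to $j\in S_2$ has odd length $2k+1$ (bipartiteness), and pairing up the edges shows $x_ix_j\fx^{\fu'}\in I^{2k+2}$ where $\fx^{\fu'}$ is the product over the path edges. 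Without this explicit witness, your claim that the region is full-dimensional is unsupported; ``one expects that\dots'' does not establish it, and this construction is the nontrivial content of the proposition.
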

\begin{proof}
The condition on the height of $I$ guarantees $\G_x$ is not empty for any $x$, then by Corollary \ref{locallyBipartite} we know the limit exists and is a rational number. In order to show the limit is positive, it is enough to show there exists $\Delta'\subseteq \Delta(I)$ such that $\tilde{H}_{0}(\Delta', k)\neq 0$ and $f_{\Delta'}$ has degree $d$ (cf. proof of Theorem \ref{mainMono}). Let us split the elements in $[d]$ between the sets $S_1=\{1,\ldots, b\}$ and $S_2=\{b+1,\ldots, d\}$ such that every edge of $\G$ contains a vertex in $S_1$ and another one in $S_2$. Let $\Delta'$ be the simplicial complex whose facets are the simplexes $F_1$ and $F_2$ with vertex set $S_1$ and $S_2$ respectively. Since $\Delta'$ is disconnected we have $\tilde{H}_{0}(\Delta', k)\neq 0$.

The minimal non-faces of $\Delta'$ are $\{ij\}$ with $i\in S_1$ and $j\in S_2$, therefore $$f_{\Delta'}(n)=\#\{\fa\in \NN^d\mid \fx^\fa\in \bigcap_{i\in S_1, j\in S_2} I^n_{ij}, \,\fx^\fa\not\in I^n_{S_1},\, \fx^\fa\not\in I^n_{S_2} \}.$$
Let $P_1$ and $P_2$ be the monomial prime ideals with support $S_1$ and $S_2$ respectively. 
Since $I$ is {\it normally torsion free}, i.e., $\Ass(R/I^n)=\Ass(R/I)$ for every $n\gs 1$ (cf. \cite{SVV}), 
by Lemma \ref{interFs}(2) we have $\bigcap_{i\in S_1, j\in S_2} I^n_{ij}=\bigcap_{ P\in\Ass(R/I)\setminus \{P_1,P_2\}}  P^n=I^n:(P_1P_2)^\infty$. Hence,
 $$f_{\Delta'}(n)=\#\{\fa\in \NN^d\mid \fx^\fa\in I^n:(P_1P_2)^\infty, \, \fx^\fa\not\in P_1^n,\,\fx^\fa\not\in P_2^n \}.$$

For a monomial prime ideal $P$, let  $H_P$ be the hyperplane of $\RR^d$ defined by the equation $\sum_{i\in \supp(P)} x_i= 1.$ Let $\P$ be the polytope
$$\P :=H_{P_1}^- \cap H_{P_2}^- \cap  \bigcap_{ P\in\Ass(R/I)\setminus \{P_1,P_2\} } H_P^+, $$
hence $ f_{\Delta'}$ is the Ehrhart polynomial of $\P$ minus the ones of the faces $\P\cap H_{P_1}$ and $\P\cap H_{P_2}$. Therefore, $\deg f_{\Delta'}=d$ if and only if $\dim \P =d$. In order to prove $\dim \P=\dim d\P=d$ it suffices to show that there exists $\fu\in d\P$ such that $\fu+\fe_i\in d\P$ for every $1\ls i\ls d$. 
Let $\T$ be a spanning tree of $\G$ and set $\fx^\fu:=\prod_{ij\in  \T}x_ix_j$. Since $\T$ has $d-1$ edges, it follows that $\fu$ and each $\fu+\fe_i$ belong to  $dH_{P_1}^-$ and  $dH_{P_2}^-$. It remains to show $\fu\in \bigcap_{ P\in\Ass(R/I)\setminus \{P_1,P_2\} } dH_P^+$ or equivalently  $\fx^\fu\in  I^d:(P_1P_2)^\infty$. We show that in fact $P_1P_2\fx^\fu\subseteq I^d$.

Let $x_i\in P_1$ and $x_j\in P_2$ and $i=a_0, a_1,\ldots, a_{2k}, a_{2k+1}=j$ a path from $i$ to $j$ in $\T$. Notice that $\fx^{\fu'}:=(x_{a_0}x_{a_1})(x_{a_1}x_{a_2})\cdots (x_{a_{2k}}x_{a_{2k+1}})\in I^{2k+1}$ while 
$$(x_ix_j)\fx^{\fu'}=(x_{a_0}x_{a_1})^2(x_{a_2}x_{a_3})^2\cdots (x_{a_{2k}}x_{a_{2k+1}})^2\in I^{2k+2}.$$
Hence, $x_ix_j\fx^\fu\in I^d$ finishing the proof. 

\end{proof}

\vspace{3mm}

\section{Finiteness of lim sup}\label{lsup}


\vspace{3mm}

Let $R=k[x_1,\ldots, x_d]$ and $\fm=(x_1,\ldots, x_d)$. For a graded $R$-module $M=\oplus_{i\in \ZZ}M_i$ and $n\in \ZZ$, we use the notation $M_{\gs n}=\oplus_{i\gs n}M_i$ and $M_{\ls n}=\oplus_{i\ls n}M_i$. For $M\neq 0$, we also use $$\indeg{M}=\min\{i\mid M_i\neq0\},\qquad \topdeg{M}=\max\{i\mid M_i\neq 0\}.$$
 If $M=\oplus_{\fa\in \ZZ^d}M_\fa$ is $\ZZ^d$-grade and $\fb\in \ZZ^d$, then we set $M_{\succeq \fb}=\oplus_{\fa\succeq \fb}M_\fa$, where $\succeq$ denotes component-wise inequality. 

We recall that the {\it Castelnuovo-Mumford regularity} of a graded module $M$ is defined as
$$\reg{M}= \max\{\topdeg \HH{i}{\fm}{M} + i \mid 0\ls i\ls \dim M\}$$
which is a finite number.  

In the next lemma we denote by $\f0$ the vector in $\ZZ^d$ whose components are all zero. 

\begin{lemma}\label{geq0}
Let $\{I_n\}_{n\gs 1}$ be a family of monomial ideals. Assume there exists $\beta \in \NN$ such that $\reg(I_n)\ls\beta n$ for every $n$, then for every $i \gs 0$ we have
$$\limsup_{n\rightarrow \infty}\frac{\lambda(\HH{i}{\fm}{R/I_n}_{\succeq \f0}) }{n^d}<\infty.$$ 

\end{lemma}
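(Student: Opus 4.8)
The plan is to reduce the statement to a counting problem over lattice points in a fixed convex cone, using Takayama's formula. First I would fix $i\gs 0$ and, working over $n\gg 0$, write, exactly as in the proof of Theorem \ref{mainMono},
\[
\lambda(\HH{i}{\fm}{R/I_n}_{\succeq \f0}) \;=\; \sum_{\fa\in \NN^d}\dim_k \tilde H_{i-1}(\Delta_\fa(I_n),k)\;=\;\sum_{\Delta'\subseteq \Delta(I_1)}\dim_k\tilde H_{i-1}(\Delta',k)\,f_{\Delta'}(n),
\]
where $f_{\Delta'}(n)=\#\{\fa\in\NN^d\mid \Delta_\fa(I_n)=\Delta'\}$ and the outer sum is finite (there are only finitely many simplicial complexes on $[d]$). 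So it suffices to show each $f_{\Delta'}(n)$ is bounded above by a fixed constant times $n^d$. Note that here, unlike Theorem \ref{mainMono}, $(I_n)$ need not be a graded family and $f_{\Delta'}(n)$ need not be finite, so I cannot invoke the Presburger machinery; the regularity hypothesis is what must replace it.

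The key step is to turn the membership conditions $\fx^{\fa^+}\not\in (I_n)_{F\cup G_\fa}$ defining $\Delta_\fa(I_n)=\Delta'$ into containment in a dilated polyhedron. Since we restrict to $\fa\succeq \f0$ we have $G_\fa=\emptyset$, so $\Delta_\fa(I_n)=\Delta'$ forces in particular that for each facet $F$ of $\Delta'$, $\fx^\fa\not\in (I_n)_F$. The regularity bound $\reg(I_n)\ls \beta n$ controls the top degree of the generators of $I_n$ (more precisely $\reg(I_n)\gs \topdeg$ of a minimal generator, or one uses that $I_n$ is generated in degrees $\ls \reg(I_n)$ in the monomial case), so every minimal generator of $I_n$, hence of $(I_n)_F$, has total degree at most $\beta n$. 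The idea is: if $\fa\succeq\f0$ has $a_j\gs \beta n$ for some $j\notin$ (the relevant support), one can factor out a variable and descend, showing the set of $\fa$ with $\Delta_\fa(I_n)=\Delta'$ and $\tilde H_{i-1}(\Delta',k)\neq 0$ is contained in a box of side $O(n)$ in the directions transverse to the faces of $\Delta'$, and is a cone of bounded "width" otherwise — in any case it lies in a region whose lattice-point count is $O(n^d)$. Concretely I would exhibit an explicit polytope $\P$, depending only on $\G:=\Delta(I_1)$, $\Delta'$, and $\beta$ (built from the hyperplanes $\sum_{i\in \supp(\fp)}x_i=1$ for the minimal primes $\fp$ of $I_1$ together with coordinate slabs $0\ls x_j\ls \beta$), such that $\{\fa\succeq\f0\mid \Delta_\fa(I_n)=\Delta',\ \tilde H_{i-1}(\Delta',k)\neq 0\}\subseteq n\P\cap \NN^d$, and conclude $f_{\Delta'}(n)=O(n^d)$ from the Ehrhart bound $\#(n\P\cap\NN^d)=O(n^{\dim\P})\ls O(n^d)$.

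The main obstacle I expect is the bookkeeping in the last step: precisely identifying which coordinate directions are "bounded" versus "free" for a given $\Delta'$, and checking that the non-membership conditions $\fx^\fa\not\in (I_n)_{S_k}$ for the facets $S_k$ of $\Delta'$ genuinely cut the relevant $\fa$'s down to a region with $O(n^d)$ lattice points — the subtlety being that a priori the membership conditions $\fx^\fa\in (I_n)_G$ for minimal non-faces $G$ of $\Delta'$ could force $\fa$ to be large in some direction, and one has to verify those large-$\fa$ contributions are controlled by the regularity bound. A clean way around this is: since $\tilde H_{i-1}(\Delta',k)\neq 0$, $\Delta'$ has at least two facets, and for $\fa$ with $\Delta_\fa(I_n)=\Delta'$ and $\fx^\fa\not\in(I_n)_{S_k}$ for a facet $S_k$, one deduces $\sum_{j\notin S_k}a_j$ is bounded by (a constant multiple of) the largest degree of a generator of $(I_n)_{S_k}$, i.e.\ by $\beta n$; intersecting over all facets $S_k$ and using that $\bigcap_k S_k \subsetneq [d]$ (again because there are $\gs 2$ facets) shows all but $\dim\Delta'$ coordinates are $O(n)$, giving the desired $O(n^d)$ bound. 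The remaining details are routine polyhedral estimates of the type already used in Lemma \ref{coconvex} and Example \ref{P5example}.
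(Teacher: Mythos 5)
Your approach is genuinely different from the paper's, and it has a gap at the step you yourself flag as subtle.

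The paper's proof is essentially a one-liner. Since $\reg(R/I_n)=\reg(I_n)-1\ls \beta n-1$, the definition of regularity gives $\topdeg \HH{i}{\fm}{R/I_n}\ls\reg(R/I_n)-i<\beta n$. Hence every $\fa\succeq \f0$ with $\HH{i}{\fm}{R/I_n}_\fa\neq 0$ satisfies $|\fa|\ls\beta n$; there are $O(n^d)$ such lattice points, and each graded piece has dimension bounded by $\sum_{\Delta'}\dim_k\tilde H_{i-1}(\Delta',k)$ (a constant, the sum over all complexes on $[d]$) via Takayama's formula. You instead use only the $\Tor_0$-level consequence of regularity (degrees of generators of $I_n$ bounded by $\beta n$) and then try to recover the degree bound on local cohomology through Takayama plus polyhedral estimates. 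That is both more work and, as written, incorrect.

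The concrete gap is your ``clean way around this'': the claim that $\fx^\fa\not\in(I_n)_{S_k}$ for a facet $S_k$ of $\Delta'$ forces $\sum_{j\notin S_k}a_j\ls c\cdot\beta n$ is false. Non-membership in a monomial ideal does not bound exponents unless the ideal is $\fm$-primary in the relevant variables: if, say, $(I_n)_{S_k}=(x_1^n)$ with $1\notin S_k$ and $|S_k^c|\gs 2$, then $\fx^\fa\not\in(I_n)_{S_k}$ only says $a_1<n$ and leaves the other coordinates outside $S_k$ unbounded, so $\sum_{j\notin S_k}a_j$ is not $O(n)$. Intersecting over the facets does not repair this, and the earlier, vaguer ``factor out a variable and descend'' sketch is not worked out. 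Also a minor point: without the graded-family hypothesis you cannot write $\Delta_\fa(I_n)\subseteq\Delta(I_1)$; you should sum over all simplicial complexes on $[d]$, which is still a finite set and is what the paper does. The moral is that the regularity hypothesis is precisely a bound on $\topdeg\HH{i}{\fm}{R/I_n}$, and invoking it at that level makes the whole polyhedral bookkeeping unnecessary.
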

\begin{proof}

By Theorem \ref{Takayama} we obtain
\begin{align*}
\lambda(\HH{i}{\fm}{R/I_n}_{\succeq \bf{0}}) &= \sum_{\fa\in \NN^d,\,|\fa|\ls\beta n} \dim_k \HH{i}{\fm}{R/I_n}_{\fa}\\
&=\sum_{\fa\in \NN^d,\,|\fa|\ls\beta n} \dim_k  \tilde{H}_{i-1}(\Delta_\fa, k)\\
&< \sum_{\Delta'} \dim_k  \tilde{H}_{i-1}(\Delta', k)(\beta n)^d,
\end{align*}
where the last sum runs over all the simplicial complexes on $[d]$. The result follows.
\end{proof}

\begin{lemma}\label{regGD}
Let $I$ be a monomial ideal, then for every $F\subseteq [d]$ we have $\reg I_F \ls \reg I$.
\end{lemma}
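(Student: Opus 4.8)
The plan is to reduce immediately to the case $F=\{j\}$ a single variable, since $I_F$ is obtained by applying the maps $\pi_{\{j\}}$ one variable at a time and $\reg$ drops (weakly) at each step would then give the general statement by iteration. So fix $j$ and write $S=k[x_i\mid i\neq j]$, so that $I_{\{j\}}=\pi_{\{j\}}(I)\subseteq S$. The key observation is that $\pi_{\{j\}}$ is, up to a change of variables, the dehomogenization map setting $x_j=1$, and its effect on a monomial ideal is governed by taking the associated primes (or primary components) that do not involve $x_j$, as recorded in Lemma \ref{interFs}(2). Concretely, $\fx^\fa\in I_{\{j\}}$ if and only if $\fx^\fa x_j^t\in I$ for some $t\ge 0$, i.e. $I_{\{j\}}=(I:x_j^\infty)$ viewed in $S$ after identifying $S$ with $k[x_1,\dots,x_d]/(x_j-1)$; equivalently $I_{\{j\}}S[x_j]=I:x_j^\infty$.

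The main step will be to compare the local cohomology of $S/I_{\{j\}}$ with that of $R/I$. I would use Takayama's formula (Theorem \ref{Takayama}) as the bookkeeping device: for $\fa\in\ZZ^{d-1}$ identified with the corresponding vector in $\ZZ^d$ having $a_j$ arbitrary, one checks from the definition of $\Delta_\fa$ that $\Delta_\fa(I_{\{j\}})$ (computed in $S$) equals $\Delta_{\fa'}(I)$ for a suitable lift $\fa'$ with $j\notin G_{\fa'}$ — in fact $\Delta_\fa(I_{\{j\}})$ is the link or the appropriate restriction of $\Delta_{\fa'}(I)$, and the simplicial cohomology groups match in the relevant degrees. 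This translates into an inequality $\topdeg\HH{i}{\fm_S}{S/I_{\{j\}}} \le \topdeg\HH{i}{\fm}{R/I}$ (in the single grading $|\fa|$), possibly after also accounting for $\HH{i}{\fm}{R/I}$ versus $\HH{i-1}{\fm_S}{S/I_{\{j\}}}$ via the long exact sequence coming from $x_j$ acting on $R/I$; here the fact that $\reg$ is the max over all $i$ of $\topdeg\HH{i}{\fm}{-}+i$ absorbs the shift cleanly. An alternative, perhaps cleaner, route is to use the exact sequence $0\to (R/I):x_j \xrightarrow{\cdot x_j} R/I\to R/(I,x_j)\to 0$ together with the standard fact that $\reg$ of a quotient by a variable is bounded by $\reg$ of the module, iterating to strip $x_j$ entirely and land on $S/I_{\{j\}}$ up to the saturation $(I:x_j^\infty)$, whose regularity is again $\le\reg(R/I)$ since $\reg(R/(I:x_j^\infty))\le\reg(R/I)$ for monomial (indeed any homogeneous) ideals.

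The hard part will be pinning down the exact comparison of the graded pieces — making sure the identification of $\Delta_\fa(I_{\{j\}})$ with a face-deletion or link of $\Delta_{\fa'}(I)$ is correct in all cases (including the degenerate ones where Takayama's extra hypotheses fail, which by the remark after Theorem \ref{Takayama} force the cohomology to vanish), and confirming that no degree shift sneaks in that would violate the inequality. Once that dictionary is set up, the inequality $\reg I_{\{j\}}\le \reg I$ is immediate from $\reg I = \reg(R/I)+1$ and the degree-by-degree bound, and iterating over the elements of $F$ finishes the proof.
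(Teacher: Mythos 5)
Your reduction to $|F|=1$ and the observation that $\pi_{\{j\}}$ is essentially the dehomogenization setting $x_j=1$ are both correct and match the paper's setup. But neither of your proposed routes closes the argument, and the paper's actual proof is substantially shorter and sidesteps all the machinery you invoke.

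Your primary route, via Takayama's formula, requires a precise dictionary between $\Delta_\fa(I_{\{j\}})$ and the complexes $\Delta_{\fa'}(I)$ for lifts $\fa'$; you yourself flag this as the hard part, and that concern is well founded. The alternative route has a concrete gap: the claim that $\reg(R/(I:x_j^\infty))\leqslant \reg(R/I)$ holds ``for monomial (indeed any homogeneous) ideals'' is not true for arbitrary homogeneous ideals, and even in the monomial case it is a nontrivial fact requiring its own argument (it does not drop out of the short exact sequence $0\to R/(I:x_j)(-1)\to R/I\to R/(I,x_j)\to 0$, which by itself only gives two-sided $\max$-type bounds). You would be proving the lemma by appealing to something at least as deep.

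The paper's proof exploits a much more elementary fact: $x_1-1$ is a nonzerodivisor on $R/I$ because $I$ is a monomial ideal. Hence if $\FF$ is a minimal ($\ZZ^d$-graded) free resolution of $R/I$, the complex $\FF/(x_1-1)\FF$ is a (possibly non-minimal) free resolution of $R_1/(I_1\cap R_1)$ over $R_1=k[x_2,\ldots,x_d]$. Passing from the $\ZZ^d$-grading of $\FF$ to the $\ZZ^{d-1}$-grading obtained by forgetting the first coordinate only decreases twists, so the (non-minimal) Betti table of $R_1/(I_1\cap R_1)$ sits inside that of $R/I$ with smaller or equal shifts, giving $\reg(R_1/(I_1\cap R_1))\leqslant\reg(R/I)$ directly, and hence $\reg I_1\leqslant\reg I$. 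This is the step your proposal is missing: once you see that dehomogenization preserves exactness of the resolution, no local cohomology comparison or colon-ideal regularity bound is needed.
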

\begin{proof}
By induction it suffices to prove the statement when $|F|=1$. Assume $F=\{1\}$ and let $R_1=k[x_2,\ldots, x_d]$, $I_1=I_{\{1\}}$. Let $\FF$ be a minimal resolution of $R/I$. Since $x_1-1$ is a nonzero divisor in $R/I$ it follows that $\FF/(x_1-1)\FF$ is a resolution of $R_1/I_1\cap R_1 $, but possibly non-minimal.  Hence,  $\reg R/I\gs \reg R_1/I_1\cap R_1$ and  the statement follows because $\reg  I_1\cap R_1 = \reg I_1 $.
\end{proof}

The following is the main theorem of this subsection. 

\begin{thm}\label{gradedFiniteLimpup}
Let $R=k[x_1,\ldots,x_d]$ and $I$ be a homogeneous $R$-ideal,  then for every  $\alpha \in \ZZ$, we have $$\limsup_{n\rightarrow \infty}\frac{\lambda(\HH{i}{\fm}{R/I^n}_{\gs -\alpha n}) }{n^d}<\infty.$$
\end{thm}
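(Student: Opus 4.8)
The plan is to reduce the general homogeneous case to the monomial case handled in Lemma \ref{geq0} by means of a Gröbner deformation, which is the standard tool for transferring statements about graded local cohomology to initial ideals. First I would fix a term order on $R$ and set $J_n := \ini(I^n)$, the initial ideal of $I^n$. The key structural fact is that the family $(J_n)_{n \geq 1}$ is a graded family of monomial ideals whose Rees algebra $\oplus_n J_n t^n$ is Noetherian (this follows, e.g., from taking the initial algebra of the Rees algebra of $I$ with respect to a suitable extension of the term order), so there is a uniform bound $\beta$ with $\reg(J_n) \leq \beta n$ for all $n$; indeed, one may even take $\beta$ to be governed by the linear bound $\reg(I^n) = \beta n + \gamma$ for $n \gg 0$ together with $\reg(J_n) \geq \reg(I^n)$ in the reverse direction — the point is only that such a linear bound exists. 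Then Lemma \ref{geq0} applies to the family $(J_n)$.

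The next step is to compare $\HH{i}{\fm}{R/I^n}$ with $\HH{i}{\fm}{R/J_n}$ degree by degree. The standard Gröbner-deformation inequality (via the flat degeneration of $R/I^n$ to $R/J_n$, whose fiber dimension is semicontinuous) gives, for every $i$ and every graded degree $j \in \ZZ$,
\begin{equation}\label{eq:GDineq}
\dim_k \HH{i}{\fm}{R/I^n}_j \;\leq\; \dim_k \HH{i}{\fm}{R/J_n}_j.
\end{equation}
Summing \eqref{eq:GDineq} over $j \geq -\alpha n$ yields
\begin{equation}\label{eq:sumGD}
\lambda\big(\HH{i}{\fm}{R/I^n}_{\geq -\alpha n}\big) \;\leq\; \lambda\big(\HH{i}{\fm}{R/J_n}_{\geq -\alpha n}\big).
\end{equation}
So it suffices to bound the right-hand side of \eqref{eq:sumGD}.

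For the monomial module $R/J_n$, I would handle the truncation $\HH{i}{\fm}{R/J_n}_{\geq -\alpha n}$ by a shift trick: decompose the graded piece in degrees $\geq -\alpha n$ into the part in degrees $\succeq \f0$ (componentwise) and the parts where some coordinate is negative. The componentwise-nonnegative part is controlled directly by Lemma \ref{geq0}. For a degree $\fa$ with exactly the coordinates in $G_\fa$ negative, Takayama's formula (Theorem \ref{Takayama}) together with Remark \ref{otherInt} identifies $\HH{i}{\fm}{R/J_n}_\fa$ with a local cohomology module of $R_{\fa+}/(J_n)_{\fa+}$ in the nonnegative multidegree $\fa^+$; since there are only finitely many choices of $G_\fa$, and for each one the relevant family $(J_n)_{G_\fa} \cap R_{\fa+}$ is again a Noetherian graded family of monomial ideals with linearly bounded regularity (here Lemma \ref{regGD} guarantees $\reg (J_n)_F \leq \reg J_n \leq \beta n$), one applies the lower-dimensional instance of Lemma \ref{geq0}. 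The constraint $|\fa| = \sum a_i \geq -\alpha n$ forces $|\fa^+| \leq (\text{something linear in } n) + |G_\fa|$-worth of negative mass, but since each negative coordinate contributes at least $-1$ and there are at most $d$ of them, one gets $|\fa^+| \leq \beta n + dn$ or similar, so the count of relevant $\fa^+$ is again $O(n^{d})$; combined with the boundedness of $\dim_k \tilde H_\bullet$ over the finitely many simplicial complexes on $[d]$, this gives the desired $O(n^d)$ bound on $\lambda(\HH{i}{\fm}{R/J_n}_{\geq -\alpha n})$.

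The main obstacle I anticipate is making the regularity bound on the initial ideals $J_n = \ini(I^n)$ genuinely uniform and linear in $n$ — i.e., producing a single $\beta$ working for all $n$ simultaneously, rather than one $\beta_n$ per $n$. The clean way around this is to invoke that $\reg(\ini(I^n)) \leq$ (a polynomial, in fact linear, function of $n$): either by noting $\oplus_n \ini(I^n) t^n$ is a finitely generated algebra (so standard regularity-of-powers results for graded families apply) or, more elementarily, by using $\reg R/I^n \leq \rho n$ for $n \gg 0$ (linearity of regularity of powers, Cutkosky–Herzog–Trung / Kodiyalam) and the fact that Gröbner degeneration can only increase regularity, but one needs to be careful that the degeneration of $I^n$ is to $\ini(I^n)$ and not merely to $(\ini I)^n$; working inside the Rees algebra with an appropriate monomial order on the extra variable resolves this. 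A secondary bookkeeping point is controlling the contributions from strictly-negative multidegrees uniformly in $\alpha$, but as sketched above this reduces to finitely many instances of Lemma \ref{geq0} in fewer variables and causes no real difficulty.
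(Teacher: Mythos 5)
Your overall structure mirrors the paper's: reduce to a monomial ideal by Gröbner degeneration, use Sbarra's comparison for graded local cohomology, obtain a linear regularity bound, split the truncated degree sum by the set of negative coordinates $G_\fa$, and invoke Lemma~\ref{geq0} (via Remark~\ref{otherInt} and Lemma~\ref{regGD}) finitely many times. That part of your argument is sound.

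However, the step where you obtain the uniform linear bound $\reg(J_n)\leq\beta n$ for $J_n = \ini(I^n)$ (with a \emph{fixed} arbitrary term order) has a genuine gap, and you seem half-aware of it but the proposed fixes do not work. First, passing to the Rees algebra and claiming its initial algebra is finitely generated is not automatic: the initial algebra of a finitely generated graded algebra need not be finitely generated (this is precisely the failure of SAGBI bases), so "the Rees algebra $\oplus_n \ini(I^n) t^n$ is Noetherian" needs a proof or a citation that you have not supplied. Second, and more seriously, your fallback — combining the Cutkosky--Herzog--Trung/Kodiyalam bound $\reg(I^n)\leq\rho n$ with "Gröbner degeneration can only increase regularity" — argues in the \emph{wrong direction}: degeneration gives $\reg(\ini(I^n))\geq\reg(I^n)$, so a linear upper bound on $\reg(I^n)$ gives no control on $\reg(\ini(I^n))$ at all. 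This is exactly why the paper does not take a fixed term order: it first performs a generic linear change of coordinates $\varphi$ and takes $J_n = \gin(I^n) = \ini(\varphi(I)^n)$ in revlex order, so that the Bayer--Stillman theorem yields the \emph{equality} $\reg(J_n) = \reg(I^n)$, and then the linear regularity bound on $\reg(I^n)$ transfers to $J_n$. Sbarra's inequality still applies to $\varphi(I)^n$ versus its initial ideal, and the rest of your decomposition goes through unchanged. So the missing idea is: use the generic initial ideal (revlex, generic coordinates) precisely so that Bayer--Stillman converts the regularity bound on $I^n$ into one on $J_n$.
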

\begin{proof}
By possible extending the field $k$, we can assume that $|k|=\infty$. Let $\varphi: R\rightarrow R$ an automorphism of $R$ given by a generic change of variables. Notice $\varphi$ induces graded isomorphisms $R/I^n\cong R/\varphi(I^n)$ and $\HH{i}{\fm}{R/I^n}\cong \HH{i}{\fm}{R/\varphi(I)^n}$. Consider $J_n:=\gin(I^n)=\ini(\varphi(I)^n)$, the {\it generic initial ideal} of $I^n$ with respect to a revlex order (cf. \cite[Chapter 4]{HH}). By \cite[(2.4)]{BS} we have $\reg{J_n}=\reg{I^n}$ and by \cite[1.1]{CHT} (see also \cite[Theorem 5]{Kod}), there exists $\beta$ such that $\reg{J_n}\ls \beta n$ for every $n$.  Now, 
\begin{align*}
\lambda(\HH{i}{\fm}{R/I^n}_{\gs -\alpha n}) &= \lambda(\HH{i}{\fm}{R/\varphi(I)^n}_{\gs -\alpha n})\\
&\ls \lambda(\HH{i}{\fm}{R/J_n}_{\gs -\alpha n}),\,\,\, \text{by \cite[2.4]{Sba},}\\
&\ls \sum_{-\alpha n \ls |\fa| \ls\beta n} \dim_k \HH{i}{\fm}{R/J_n}_{\fa}.
\end{align*}
Recall that a sequence of real numbers $a_n$ is $O(n^t)$ if $\limsup_{n\rightarrow \infty}\frac{|a_n|}{n^t}<\infty$. Fix $S\subseteq [d]$, we will prove that 
$$\sum_{-\alpha n \ls |\fa| \ls\beta n, G_\fa=S} \dim_k \HH{i}{\fm}{R/J_n}_{\fa}=O(n^d)$$ 
which will finish the proof. Fix $\fa$ such that $G_{\fa}=S$ and set $p=|S|$. By Remark \ref{otherInt} we have  $\dim_k \HH{i}{\fm}{R/J_n}_{\fa} =  \dim_k \HH{i-p}{\fm_{\fa+}}{R_{\fa+}/J_{n,\,{\fa+}}}_{\fa^+}$ and by Lemma \ref{regGD} this dimension vanishes for  $|\fa^+|>\beta n$.  Therefore, 
\begin{align*}
\sum_{-\alpha n \ls |\fa| \ls\beta n, G_\fa=S} \dim_k \HH{i}{\fm}{R/J_n}_{\fa}&< ((\alpha +\beta)n)^{p}\lambda( \HH{i-p}{\fm_{\fa+}}{R_{\fa+}/J_{n,\,{\fa+}}}_{\succeq \f0})\\ 
&=O(n^{p})\cdot O(n^{d-p})=O(n^d),
\end{align*}
where the first equality  follows from Lemma \ref{geq0} as $\reg J_{n,\,{\fa+}}\ls \beta n$ by Lemma \ref{regGD}.
\end{proof}

If $I$ is a monomial ideal and $\lambda(\HH{i}{\fm}{R/I})<\infty$, then $\HH{i}{\fm}{R/I}_j=0$ for every $j<0$ (see \cite[Proposition 1]{Tak}), hence we recover the special case of Corollary \ref{corofMainMon} in which the ideals $(I_n)_{n\gs 0}$ are the powers of a monomial ideal.


\vspace{2mm}

Let $R=\CC[x_{i,j}]$  be the polynomial ring over the complex numbers in the $mn$ variables $\{x_{i,j} \mid 1\ls i\ls p, 1\ls j\ls q\}$. We view $R$ as the ring of polynomial functions on the matrices of size $m\times n$. We say that an $R$-ideal is  {\it GL-invariant} if it is invariant under the natural action by the group $GL=GL_p(\CC)\times GL_q(\CC)$. This class of ideals has been completely characterized in \cite{DEP} (see also \cite[Section 2.1]{Claudiu}). A natural example of $GL$-invariant ideals are powers {\it determinantal ideals}, i.e., the  ones generated by the $t\times t$ minors of a generic $p\times q$ matrix for any $1\ls t\ls \min\{p,q\}$. In \cite[6.1]{Claudiu}, Claudiu Raicu proves that if $I$ is a $GL$-invariant ideal that is the thickening of a determinantal ideal, then $\HH{i}{\fm}{R/I^n}_j=0$ for every  $i\ls  p+q-2$ and  $j<0$. 

\begin{cor}\label{determinantal}
Let $R=\CC[x_{i,j}]$  be the polynomial ring in $pq$ variables and $I$ a $GL$-invariant ideal that is the thickening of a determinantal ideal, then for every  $i\ls  p+q-2$ we have
$$\limsup_{n\rightarrow \infty}\frac{\lambda(\HH{i}{\fm}{R/I^n}) }{n^d}<\infty.$$ 
\end{cor}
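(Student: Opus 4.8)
The plan is to combine Raicu's vanishing result with Theorem~\ref{gradedFiniteLimpup} in the most direct way possible. By \cite[6.1]{Claudiu}, the hypothesis that $I$ is a $GL$-invariant ideal that is the thickening of a determinantal ideal gives $\HH{i}{\fm}{R/I^n}_j=0$ for all $i\ls p+q-2$ and all $j<0$, and this holds for every $n\gs 1$. In particular, for such $i$, the graded module $\HH{i}{\fm}{R/I^n}$ is concentrated in nonnegative degrees, i.e. $\HH{i}{\fm}{R/I^n}=\HH{i}{\fm}{R/I^n}_{\gs 0}$.

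The key step is then to observe that $\HH{i}{\fm}{R/I^n}_{\gs 0}=\HH{i}{\fm}{R/I^n}_{\gs -\alpha n}$ for the choice $\alpha=0$. So I would apply Theorem~\ref{gradedFiniteLimpup} with $\alpha=0$: it says that
$$\limsup_{n\rightarrow \infty}\frac{\lambda(\HH{i}{\fm}{R/I^n}_{\gs 0}) }{n^d}<\infty,$$
and by the vanishing this left-hand side equals $\limsup_{n\rightarrow \infty}\frac{\lambda(\HH{i}{\fm}{R/I^n}) }{n^d}$, which is exactly the claimed bound. Here $d=pq$ is the dimension of $R$, matching the notation in the statement. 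Note also that one should check the lengths $\lambda(\HH{i}{\fm}{R/I^n})$ are actually finite, but this too follows from \cite[6.1]{Claudiu}: in fact the finiteness of length of local cohomology is implicit in Raicu's computation of the graded pieces, or alternatively one can remark that Theorem~\ref{gradedFiniteLimpup} already controls $\lambda(\HH{i}{\fm}{R/I^n}_{\gs -\alpha n})$, and with the vanishing this captures the whole module, so finiteness for $n\gg 0$ is a byproduct.

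There is essentially no obstacle here: the corollary is a formal consequence of the two cited results, and the only thing to be careful about is bookkeeping — making sure the range $i\ls p+q-2$ from Raicu's theorem is exactly the range in which we invoke the vanishing, and that the ambient dimension $d$ in Theorem~\ref{gradedFiniteLimpup} is being read as $d=pq$. So the write-up is just: invoke \cite[6.1]{Claudiu} for the vanishing in degrees $<0$, rewrite the module as its own nonnegative part, apply Theorem~\ref{gradedFiniteLimpup} with $\alpha=0$, and conclude. The same remark pattern is used for Corollaries~\ref{lcilimsup} and its analogues, so this fits the established template of the section.
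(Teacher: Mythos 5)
Your argument is correct and is exactly the paper's reasoning: Raicu's vanishing (\cite[6.1]{Claudiu}) shows $\HH{i}{\fm}{R/I^n}$ is concentrated in nonnegative degrees for $i\ls p+q-2$, so Theorem~\ref{gradedFiniteLimpup} applied with $\alpha=0$ gives the bound. The paper states the corollary without a separate proof precisely because it is this immediate combination.
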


\vspace{2mm}

For a homogeneous ideal $I$, we say $X=\Proj R/I$ is {\it locally complete intersection} (or simply {\it lci}), if $R_\fp/I_\fp$ is a complete intersection for any prime ideal $\fp\in \Proj R/I$. 

The next two results are explained to us by Robert Lazarsfeld, whom we thank. 

\begin{lemma}[Robert Lazarsfeld]\label{LemmaRob}
Let  $X\subset \PP^r$ be a lci scheme over a field of characteristic zero. Let $E$ be a vector bundle on $X$ and $\F$ a coherent sheaf on $X$, then there exists $\alpha\in \NN$ such that  
$$H^i(X, \Sym^n(E) \otimes \F \otimes \O_X(t))=0$$
for every $i>0$, $t> \alpha n$, and $n\gs 0$.
\end{lemma}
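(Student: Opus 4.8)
\textbf{Proof plan for Lemma \ref{LemmaRob}.}

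The plan is to reduce the vanishing of $H^i(X,\Sym^n(E)\otimes\F\otimes\O_X(t))$ for $t\gg n$ to a uniform Castelnuovo–Mumford-type statement on the projective space $\PP^r$, by pushing everything forward along the closed immersion $\iota\colon X\hookrightarrow\PP^r$. First I would replace $\Sym^n(E)\otimes\F$ by a more tractable object: since $X$ is lci in $\PP^r$, the sheaf $\iota_*\O_X$ admits a finite locally free resolution on $\PP^r$ (say of length $c=\codim X$), so it suffices to control the cohomology of the finitely many coherent sheaves appearing after tensoring such a resolution with $\iota_*\!\big(\Sym^n(E)\otimes\F\big)\otimes\O_{\PP^r}(t)$ — but this is circular unless we handle the dependence on $n$. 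The clean way is: consider the relative Proj $\mathbb P(E)\xrightarrow{\;\rho\;}X\xrightarrow{\;\iota\;}\PP^r$ with its tautological bundle $\O_{\mathbb P(E)}(1)$; then $\rho_*\O_{\mathbb P(E)}(n)=\Sym^n(E)$ for $n\ge 0$, and since $\rho$ is a projective morphism of relative dimension $\operatorname{rk}(E)-1$ with $R^j\rho_*\O_{\mathbb P(E)}(n)=0$ for $j>0$, $n\ge0$, the Leray spectral sequence gives $H^i(X,\Sym^n(E)\otimes\F\otimes\O_X(t))\cong H^i\big(\mathbb P(E),\O_{\mathbb P(E)}(n)\otimes\rho^*\F\otimes\rho^*\iota^*\O(t)\big)$.

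Now $\mathbb P(E)$ is a projective variety with two line bundles in play: $A:=\O_{\mathbb P(E)}(1)$ (relatively ample) and $L:=\rho^*\iota^*\O_{\PP^r}(1)$ (the pullback of the ample on $\PP^r$). For a suitable $m_0$, the bundle $A\otimes L^{\otimes m_0}$ is ample on $\mathbb P(E)$; fix such an $m_0$. The key step is then a standard Serre/Fujita-type uniform vanishing: for the fixed coherent sheaf $\rho^*\F$ on the fixed projective scheme $\mathbb P(E)$, equipped with the ample bundle $B:=A\otimes L^{\otimes m_0}$ and the nef (indeed semiample, being a pullback of ample) bundle $L$, there is a constant $\alpha$ such that $H^i(\mathbb P(E),\,B^{\otimes n}\otimes L^{\otimes k}\otimes\rho^*\F)=0$ for all $i>0$, all $n\ge0$, and all $k\ge 0$ — this is because $B^{\otimes n}\otimes L^{\otimes k}$ ranges over a cone of nef bundles whose "positive part" grows at least linearly in $n$, so one can invoke Fujita vanishing (for $B$ ample, $H^i(B^{\otimes n}\otimes P\otimes\G)=0$ for $n\ge n_0(\G)$ and any nef $P$). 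Unwinding, $\O_{\mathbb P(E)}(n)\otimes L^{\otimes t}=B^{\otimes n}\otimes L^{\otimes(t-m_0 n)}$, which has the required form as soon as $t\ge m_0 n$, so $\alpha:=m_0$ works (after possibly enlarging $m_0$ to absorb the $n_0$ from Fujita, using $n\ge0$). Here is where characteristic zero would actually enter only if one wanted the sharper Kodaira-type bound; for the existence of \emph{some} $\alpha$, Fujita vanishing suffices in any characteristic, so I would double-check whether the hypothesis is genuinely needed — I suspect it is a convenience (Lazarsfeld's toolkit) rather than a necessity at this level of generality.

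The main obstacle I anticipate is the uniformity in \emph{both} $n$ and $t$ simultaneously: naive Serre vanishing gives, for each fixed $n$, a threshold $t\ge\alpha_n$, but with no control on how $\alpha_n$ grows. The device that resolves this is precisely the passage to $\mathbb P(E)$ above, which converts "$\Sym^n$" into "$n$-th power of a line bundle," so that the two-parameter family $\{\O(n)\otimes\O_X(t)\}$ becomes a family of line bundles lying in (a shift of) the nef cone, reducing everything to Fujita's uniform vanishing theorem applied to the single sheaf $\rho^*\F$. A secondary technical point is making sure $R^j\rho_*$ of the relevant sheaves vanishes for $j>0$ so that the Leray spectral sequence degenerates to the claimed isomorphism; this is automatic for $\O_{\mathbb P(E)}(n)\otimes\rho^*(\text{anything})$ with $n\ge0$ by the projection formula and the standard computation of cohomology of projective bundles, so it should go through cleanly.
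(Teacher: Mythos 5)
Your proposal is correct and follows essentially the same route as the paper: the paper cites Hartshorne's Theorem (\cite[6.1.10]{PAG}) strengthened by Fujita vanishing as a black box, while you unpack exactly that citation by passing to $\mathbb P(E)$, converting $\Sym^n$ into powers of $\O_{\mathbb P(E)}(1)$, twisting to get an ample line bundle, and invoking Fujita there — which is precisely how \cite[6.1.10]{PAG} is proved and strengthened. Your observation that the lci and characteristic-zero hypotheses are not actually used in this lemma (only in Proposition~\ref{Rob}, where the normal bundle and Serre duality enter) is also accurate.
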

\begin{proof}
We can proceed as in the proof of Hartshorne's Theorem \cite[6.1.10]{PAG} via Fujita's Vanishing Theorem, to prove that if $E'$ is an ample vector bundle, then there exists $n_0:=n(E',\F)$ such that 
\begin{equation}\label{genHT}
H^i(X, \Sym^n(E') \otimes \F \otimes P)=0
\end{equation}
for every $i>0$, $n\gs n_0$, and all nef vector bundles $P$ on $X$.

Now, let $a_0$ be such that $E(a_0)$ is ample, then the result follows from \eqref{genHT} for $t\gs a_0 n$ and $n\gg 0$. Finally we may consider $a_0\gg 0$ and by Serre's Vanishing Theorem the statement follows for the remaining values of $n$.
\end{proof}

\begin{prop}[Robert Lazarsfeld]\label{Rob}
Let  $X\subset \PP^r$ be a lci scheme over a field of characteristic zero and let $I\subseteq \O_{\PP^r}$ be its ideal sheaf. Then there exists $\alpha\in \NN$ such that 
$$H^i(\PP^r, I^n(t))=0$$
for every $t<-\alpha n$, all $n\gs 1$, and $0\ls i\ls \dim X$.

\end{prop}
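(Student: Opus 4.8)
The plan is to translate the vanishing statement about $H^i(\PP^r, I^n(t))$ for $t<-\alpha n$ into a statement about $H^i$ of symmetric powers of a vector bundle twisted by line bundles, so that Lemma~\ref{LemmaRob} applies directly. First I would recall that since $X\subset \PP^r$ is lci, its ideal sheaf $I$ is locally generated by a regular sequence, so the conormal sheaf $N^\vee = I/I^2$ is a vector bundle on $X$, and for each $n\gs 1$ the graded piece $I^n/I^{n+1}$ is canonically isomorphic to $\Sym^n(N^\vee)$ (pushed forward to $\PP^r$ via the closed immersion $X\hookrightarrow \PP^r$). This is the standard description of the associated graded of the $I$-adic filtration for an lci subscheme.

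Next I would set up the filtration bookkeeping. Fix $i$ with $0\ls i\ls \dim X$. For a fixed twist $t$, the short exact sequences
$$0\longrightarrow I^{n+1}(t)\longrightarrow I^{n}(t)\longrightarrow \bigl(I^{n}/I^{n+1}\bigr)(t)\longrightarrow 0$$
give, on cohomology, maps relating $H^i(\PP^r, I^n(t))$ to $H^i\bigl(X,\Sym^n(N^\vee)\otimes\O_X(t)\bigr)$ and $H^i(\PP^r, I^{n+1}(t))$, and similarly an exact sequence involving $H^{i-1}$ of the quotient. The idea is that if I can make all the relevant cohomology groups of the graded pieces $\Sym^m(N^\vee)\otimes\O_X(t)$ vanish, for $m$ ranging over the needed values and in cohomological degrees $i$ and $i+1$ (or $i-1$ and $i$), then a descending induction — using that $I^N(t)$ has vanishing higher cohomology for $N\gg 0$ by Serre vanishing applied to a fixed twist, or more carefully by a uniform Serre-type bound — forces $H^i(\PP^r, I^n(t))=0$. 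Here Lemma~\ref{LemmaRob} is applied with $E=N^\vee$ and $\F$ running through $\O_X$ and possibly $\O_X(1)$, producing a single $\alpha$ (take the max over the finitely many sheaves $\F$ and the finitely many cohomological degrees $i$ with $0\ls i\ls\dim X$) such that $H^j\bigl(X,\Sym^m(N^\vee)\otimes\F\otimes\O_X(t)\bigr)=0$ for $j>0$, $t>\alpha m$, $m\gs 0$. Substituting the twist by $-t$ (i.e. replacing $t$ by $-t$), this says the graded pieces have vanishing higher cohomology once $-t > \alpha m$, equivalently $t< -\alpha m$.

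The main obstacle I anticipate is the uniformity in $n$: naively, the above filtration argument at a fixed twist $t$ requires knowing $H^i(\PP^r, I^N(t))=0$ for some $N$ depending on $t$, and then one climbs down from $N$ to $n$ — but the number of steps grows with $t$, so one must be careful that the vanishing of the graded pieces is available at every intermediate stage. The resolution is that the graded-piece vanishing is exactly of the shape needed: for the step passing from $I^{m+1}(t)$ to $I^m(t)$ we need $H^i$ and $H^{i+1}$ of $\Sym^m(N^\vee)\otimes\O_X(t)$ to vanish, and Lemma~\ref{LemmaRob} (after reindexing) guarantees this for all $m < -t/\alpha$, which is precisely the range of $m$ appearing when $n \ls m \ls N$ and $t < -\alpha n$. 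Thus with $\alpha$ chosen as above, for $t<-\alpha n$ every graded piece $\Sym^m(N^\vee)(t)$ with $m\gs n$ has vanishing cohomology in degrees $i$ and $i+1$ (and $i-1$), and a clean descending induction on $m$ from any $N$ with $H^i(\PP^r,I^N(t))=0$ (Serre vanishing, since $N$ can be taken $\ge$ some bound depending only on $t$, or one argues that $I^N = \O$ locally is false but $H^i(\PP^r, I^N(t)) \hookrightarrow H^i(\PP^r, \O(t))=0$ for $i\gs 1$... careful: this needs $i\gs 1$, and $i=0$ is handled separately since $H^0(\PP^r, I^n(t))=0$ for $t<0$ trivially as $I^n(t)$ has no sections) yields $H^i(\PP^r, I^n(t))=0$. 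I would need to double-check the $i=0$ case and the top case $i=\dim X$ separately, but these are easy: $H^0$ vanishes for negative twists, and for $i=\dim X$ one notes $\Sym^m(N^\vee)\otimes\O_X(t)$ still has vanishing $H^{i}$ and $H^{i+1}=0$ automatically on the $\dim X$-dimensional $X$, so Lemma~\ref{LemmaRob} in degree $i=\dim X$ suffices.
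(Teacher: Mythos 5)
Your high-level plan — exploit the lci structure to identify $I^n/I^{n+1}$ with $\Sym^n(N^\vee)$, feed the graded pieces into Lemma~\ref{LemmaRob}, and propagate vanishing along the $I$-adic filtration — is the right one, but there are two real gaps in the execution. The first is the application of Lemma~\ref{LemmaRob} itself: that lemma is an ample-twist vanishing statement and gives $H^j\bigl(X,\Sym^m(E)\otimes\F\otimes\O_X(t)\bigr)=0$ for $t>\alpha m$, i.e.\ for \emph{large positive} twists. Your sentence ``substituting the twist by $-t$'' does not transform this into a statement about $H^j\bigl(X,\Sym^m(N^\vee)(t)\bigr)$ for $t<-\alpha m$; a conclusion at large \emph{negative} twists genuinely requires Serre duality, which swaps $E=N$ with $N^\vee$, swaps $j$ with $\dim X-j$ (so the range becomes $j<\dim X$ rather than $j>0$), and forces $\F$ to be the canonical bundle $\omega_X$ — this is exactly what the paper does. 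Without that dualization step the graded-piece vanishing you invoke is simply not available.

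The second gap is the base of your descending induction. Writing the long exact sequence of $0\to I^{m+1}(t)\to I^m(t)\to \Sym^m(N^\vee)(t)\to 0$ does let you pass from $H^i(I^{m+1}(t))=0$ to $H^i(I^m(t))=0$ whenever the graded piece vanishes in degree $i$, but you must start the descent somewhere. Your two candidate starting points both fail: Serre vanishing concerns large \emph{twists} of a fixed sheaf, not large \emph{powers} $I^N$ at a fixed negative twist $t$; and the map $H^i(\PP^r,I^N(t))\to H^i(\PP^r,\O_{\PP^r}(t))$ is not injective — for $1\ls i\ls r-1$ the vanishing of $H^{i-1}(\O_{\PP^r}(t))$ and $H^i(\O_{\PP^r}(t))$ gives $H^i(I^N(t))\cong H^{i-1}(\O_{X_N}(t))$, which is exactly the quantity one is trying to control, not $0$. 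Moreover, since the graded-piece vanishing holds only for $m<-t/\alpha$, you can only descend from $N\approx -t/\alpha$, and at that $N$ the twist $t$ sits right on the boundary $t\approx -\alpha N$, so nothing forces the base case. The paper sidesteps both problems by running an \emph{ascending} induction on the structure sheaves $\O_{X_n}$ using $0\to\Sym^n(N^\vee)\to\O_{X_{n+1}}\to\O_{X_n}\to 0$, where the base case $n=1$ is trivial (choose $\gamma$ with $H^i(\O_{X}(t))=0$ for $t<-\gamma$), and only at the very end passes to $I^n$ via $0\to I^n\to\O_{\PP^r}\to\O_{X_n}\to 0$.
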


\begin{proof}

Set  $X_n:=\Proj \O_{\PP^r}/I^n$. We will proceed by induction on $n$ to prove that there exists $\alpha \in \NN$ such that $H^i(\O_{X_n}(t))=0$ for every $i<\dim X$ and $t< -\alpha n$. Let $\gamma \in \NN$ be such that $H^i(\O_{X_1}(t))=0$ for $t<-\gamma$ and  $N:=N_{X/\PP^r}$ the normal bundle of $X$ in $\PP^r$.  
Now, we apply Lemma \ref{LemmaRob} with $\F$ the canonical line bundle of $X$. From Serre's Duality Theorem and the isomorphism $(\Sym^n(N))^*=\Sym^n(N^*)$ it follows that there exists $\beta\in \NN$ such that 
$$H^i(X, \Sym^n(N^*)(t))=0 $$
for every  $i<\dim X$, $t<-\beta n$, and $n\gs 0$.

Consider the exact sequence 
$$0\rightarrow \Sym^n(N)^*\rightarrow \O_{X_{n+1}}\rightarrow \O_{X_n}\rightarrow 0$$
then the claim follows by induction with $\alpha=\max\{\beta, \gamma\}$. 

Finally, the result follows by the vanishing of negative twists of $\O_{\PP^r}$ and the following exact sequence 
$$0\rightarrow I^n\rightarrow \O_{\PP^r}\rightarrow \O_{X_n}\rightarrow 0.$$
\end{proof}

From Proposition \ref{Rob} and Theorem \ref{gradedFiniteLimpup} we obtain the following corollary.

\begin{cor}\label{lcilimsup}
Assume $k$ is of characteristic 0.  If $I$ is a homogeneous ideal such that $X=\Proj R/I$ is lci, then for every $i<\dim R/I$ we have
$$\limsup_{n\rightarrow \infty}\frac{\lambda(\HH{i}{\fm}{R/I^n}) }{n^d}<\infty.$$ 
\end{cor}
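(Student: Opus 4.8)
The plan is to deduce Corollary \ref{lcilimsup} by combining the two results it is sandwiched between: Proposition \ref{Rob}, which is the geometric vanishing statement, and Theorem \ref{gradedFiniteLimpup}, which gives finiteness of the $\limsup$ once we restrict the grading to degrees $\gs -\alpha n$. The key point is that the sheaf-cohomology vanishing of Proposition \ref{Rob} translates into the vanishing of the negatively-graded pieces of the local cohomology modules $\HH{i}{\fm}{R/I^n}$ in the relevant range, so that restricting to $M_{\gs -\alpha n}$ loses nothing.

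First I would set $S=R/I$ with $\dim S = \dim R/I$, and recall the standard comparison between graded local cohomology and sheaf cohomology on $\PP^r$ (with $r = d-1$): for the sheaf $\I^n$ on $\PP^r$ associated to $I^n$, there are exact sequences
\begin{equation*}
0\to \HH{0}{\fm}{R/I^n}_t \to (R/I^n)_t \to \bigoplus_{t}H^0(X_n,\O_{X_n}(t)) \to \HH{1}{\fm}{R/I^n}_t\to 0
\end{equation*}
and graded isomorphisms $\HH{i}{\fm}{R/I^n}_t \cong \bigoplus_t H^{i-1}(X_n,\O_{X_n}(t))$ for $i\gs 2$, where $X_n=\Proj R/I^n$. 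Using the ideal-sheaf sequence $0\to \I^n\to \O_{\PP^r}\to \O_{X_n}\to 0$ together with the vanishing of $H^j(\PP^r,\O_{\PP^r}(t))$ for $0<j<r$ and for $t$ in a suitable range, the hypothesis of Proposition \ref{Rob} — namely $H^i(\PP^r,\I^n(t))=0$ for $t<-\alpha n$, all $n\gs 1$, and $0\ls i\ls \dim X = \dim R/I - 1$ — yields $H^{i-1}(X_n,\O_{X_n}(t))=0$ for $t<-\alpha n$ and $1\ls i-1\ls \dim R/I - 1$, hence $\HH{i}{\fm}{R/I^n}_t=0$ for $t<-\alpha n$ and $2\ls i\ls \dim R/I$; the cases $i=0,1$ are handled by the four-term sequence above together with $H^0(\PP^r,\I^n(t))=0$ for $t$ sufficiently negative. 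The upshot is: there exists $\alpha\in\NN$ with $\HH{i}{\fm}{R/I^n}_{<-\alpha n}=0$ for all $n\gs 1$ and all $i<\dim R/I$.

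With this in hand the corollary is immediate: since $\HH{i}{\fm}{R/I^n} = \HH{i}{\fm}{R/I^n}_{\gs -\alpha n}$ for the $\alpha$ just produced, we get
\begin{equation*}
\limsup_{n\to\infty}\frac{\lambda(\HH{i}{\fm}{R/I^n})}{n^d} = \limsup_{n\to\infty}\frac{\lambda(\HH{i}{\fm}{R/I^n}_{\gs -\alpha n})}{n^d}<\infty
\end{equation*}
by Theorem \ref{gradedFiniteLimpup} applied to this $\alpha$. Note one should also confirm that these lengths are finite for $n\gg 0$ in the first place, so that the statement is not vacuous — for $i<\dim R/I$ one invokes the lci hypothesis (via Serre's conditions on the local rings $R_\fp/I^n_\fp$, or directly from Proposition \ref{anotherGFT}) to see that $\HH{i}{\fm}{R/I^n}$ has finite length for $n\gg 0$; in any case Theorem \ref{gradedFiniteLimpup} bounds the truncated length regardless.

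The main obstacle is the careful bookkeeping in the translation step: one must be precise about the index shift between $\HH{i}{\fm}$ and $H^{i-1}$, about the exact range of $i$ for which the vanishing of $H^i(\PP^r,\I^n(t))$ is available in Proposition \ref{Rob} ($0\ls i\ls \dim X$, i.e. up to $\dim R/I - 1$), and about the degrees $t$ for which $H^j(\PP^r,\O_{\PP^r}(t))$ vanishes — these are automatic for $0<j<r$ but $j=0$ requires $t<0$ and $j=r$ requires $t>-r-1$, so one should check the low-cohomological-degree and edge cases separately and make sure the final range $i<\dim R/I$ is exactly what survives. Once that is pinned down, everything else is a direct citation of the two neighbouring results.
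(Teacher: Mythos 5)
Your proposal is correct and is precisely the argument the paper intends: the text just before the corollary states that it follows from Proposition \ref{Rob} together with Theorem \ref{gradedFiniteLimpup}, and your translation of the sheaf-cohomology vanishing into the vanishing of $\HH{i}{\fm}{R/I^n}_{<-\alpha n}$ via the Grothendieck--Serre sequence and the ideal-sheaf sequence is exactly the omitted bookkeeping. One small slip: from $H^j(\PP^r,\I^n(t))=0$ for $0\ls j\ls \dim X$ you can only conclude $\HH{i}{\fm}{R/I^n}_{<-\alpha n}=0$ for $2\ls i\ls \dim X=\dim R/I-1$, not up to $\dim R/I$ as you wrote, since the ideal-sheaf sequence requires vanishing of $H^i(\PP^r,\I^n(t))$ and not $H^{i-1}$ — but this still covers every $i<\dim R/I$ once the $i=0,1$ cases are handled as you describe, so the conclusion stands.
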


\vspace{.5mm}
\begin{remark}
Let $\Sing X$ be the singular locus of $X$. In \cite[3.1]{BBLSZ} the authors proved that if $i\ls \codim(\Sing X)$ then $\alpha=0$ satisfies the conclusion of Proposition \ref{Rob}.
\end{remark}

\vspace{.5mm}

\begin{cor}
Assume $k$ is of characteristic 0. If $I$ is a homogeneous radical ideal such that $\lambda(\HH{i}{\fm}{R/I^n}))<\infty$ for every $i<\dim R/I$ and $n\gg 0$,  then for every $i<\dim R/I$ we have
$$\limsup_{n\rightarrow \infty}\frac{\lambda(\HH{i}{\fm}{R/I^n}) }{n^d}<\infty.$$ 
\end{cor}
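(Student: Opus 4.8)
The plan is to reduce from an arbitrary homogeneous radical ideal $I$ to a locally complete intersection situation where Corollary~\ref{lcilimsup} applies, using the hypothesis that the relevant local cohomology lengths are already finite. The key observation is that finiteness of $\lambda(\HH{i}{\fm}{R/I^n})$ for $i < \dim R/I$ and $n\gg 0$ should force $X = \Proj R/I$ to be locally a complete intersection away from $\fm$; more precisely, via Grothendieck's Finiteness Theorem (Theorem~\ref{GFT}) applied in the spirit of Proposition~\ref{anotherGFT}, finiteness of these lengths controls $\depth (R/I^n)_\fp$ at every non-maximal prime, and when $I$ is radical the stable value $\lim_n \depth (R/I^n)_\fp$ is governed by the local structure of $R_\fp/I_\fp$. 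First I would unwind what the finiteness hypothesis says localized at each $\fp \neq \fm$: combining Brodmann's theorem with the Grothendieck bound, $\lambda(\HH{i}{\fm}{R/I^n}) < \infty$ for all $i < \dim R/I$ forces $\depth(R/I^n)_\fp \geq \dim(R/I)_\fp$ for $n \gg 0$ at every such $\fp$, i.e.\ $(R/I^n)_\fp$ is Cohen--Macaulay for large $n$.

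Next I would use the theorem of Cowsik--Nagata / the characterization that, for a radical ideal, $R_\fp/I_\fp^n$ being Cohen--Macaulay for all large $n$ (equivalently, the associated graded ring being Cohen--Macaulay, or even just having good depth) forces $I_\fp$ to be generated by a regular sequence, so that $R_\fp/I_\fp$ is a complete intersection. The cleanest route is: $I$ radical implies $I_\fp$ is a radical ideal of the regular local ring $R_\fp$; if moreover the powers $I_\fp^n$ define Cohen--Macaulay quotients for $n \gg 0$, then by the depth formula relating $\depth R_\fp/I_\fp^n$ to the analytic spread and the reduction number, one deduces $\ell(I_\fp) = \height I_\fp$, and combined with radicality this yields that $I_\fp$ is a complete intersection (this is where I expect the main technical work). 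Hence $X$ is lci, and Corollary~\ref{lcilimsup} gives the conclusion.

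\textbf{Main obstacle.} The delicate point is the implication ``$R_\fp/I_\fp^n$ Cohen--Macaulay for $n\gg 0$, $I_\fp$ radical $\Rightarrow$ $I_\fp$ is a complete intersection.'' One has to be careful: Cohen--Macaulayness of high powers is a strong condition (it is much stronger than Cohen--Macaulayness of $R_\fp/I_\fp$ itself), and the standard tool is that $\depth R/J^n$ stabilizes to $\dim R/J^n - (\ell(J) - \text{something})$; when the stable depth is maximal one forces the Rees algebra or associated graded ring to have large depth, and for a radical (generically complete intersection, since $R_\fp$ is regular and we may pass to a minimal prime) ideal this pins down the generation. I would look to results of Cowsik--Nagata, Huneke, or Herzog--Simis--Vasconcelos on when all powers of a radical ideal are Cohen--Macaulay; these typically conclude the ideal is a complete intersection. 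If a fully general statement is unavailable, an alternative is to only require Cohen--Macaulayness (or the appropriate depth inequality) to hold at the relevant primes and to invoke the lci hypothesis of~\cite{BBLSZ} more directly, or to weaken the corollary to ``$X$ lci on the punctured spectrum,'' which still suffices for the $\limsup$ statement via Theorem~\ref{gradedFiniteLimpup} together with Proposition~\ref{Rob} applied on the open locus. Once lci-ness (on $\Proj R/I$) is in hand, the remainder is immediate from Corollary~\ref{lcilimsup}.
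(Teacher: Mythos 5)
Your approach matches the paper's. The ``main obstacle'' you flag --- that a radical ideal whose high powers are all locally Cohen--Macaulay must be locally a complete intersection --- is precisely the theorem of Cowsik and Nori (the paper's reference~\cite{CN}; you wrote ``Cowsik--Nagata,'' but it is Cowsik--Nori, \emph{On the fibres of blowing up}), so the step you worried about is a citation rather than new work; the paper derives the local Cohen--Macaulayness from graded local duality rather than from Grothendieck finiteness as you do, but these are equivalent routes, and the conclusion then follows from Corollary~\ref{lcilimsup} exactly as you propose.
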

\begin{proof}
From local duality it follows that $R_{\fp}/I^nR_{\fp}$ is Cohen-Macaulay for every $n\gg 0$ and $\fp\in \Spec(R)\setminus \{\fm\}$. Therefore, by \cite{CN} $\Proj R/I$ is lci. The conclusion follows by Corollary \ref{lcilimsup}.
\end{proof}

\vspace{.5mm}

\begin{remark}
We note that if is not $I$ radical, the condition $\lambda(\HH{i}{\fm}{R/I^n}))<\infty$ for every $i<\dim R/I$ and $n\gg 0$ does not imply $\Proj R/I$ is lci. Indeed, if $I$ is any homogeneous ideal such that $\Proj R/I$ is lci, then $\Proj R/I^2$ need not be lci.
\end{remark}

\vspace{3mm}

\section{Positive lim inf}\label{linf}

In this section, we give classes of ideals for which the liminf of $\{\frac{\lambda(\HH{i}{\fm}{R/I^n})}{n^d}\}_{n\gs 0}$ is positive. The theme is that $R/I$ has sufficiently good singularities.

If $(R,\fm)$ is a Noetherian local ring of characteristic $p>0$. We say $R/I$ is {\it $F$-full} if $\HH{i}{\fm}{R/J}\rightarrow \HH{i}{\fm}{R/I}$ is surjective for every $J\subseteq I\subseteq \sqrt{J}$, see \cite[2.1]{Linquan1}.

Let $\F:R\rightarrow R$, given by $\F(x)=x^p$, be the {\it Frobenius endomorphism}.  The ring $R$ is said to be {\it $F$-pure}  if $\F\otimes M: R\otimes M \rightarrow R\otimes M$ is injective for every $R$-module $M$. It can be shown that every $F$-pure ring is $F$-full (see \cite[2.4]{Linquan1}).

\begin{thm}\label{FpureLiminf}
Let $(R,\fm)$ be a regular local ring of characteristic $p>0$ and dimension $d$. If $I$ is an $R$-ideal such that $R/I$ is $F$-full $($e.g. $F$-pure$)$, then for every $i$ such that $\HH{i}{\fm}{R/I}\neq 0$ we have $$\liminf_{n\rightarrow \infty } \frac{\lambda(\HH{i}{\fm}{R/I^n})}{n^d}>0.$$
\end{thm}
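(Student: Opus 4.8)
The plan is to produce, for each $n$, a lower bound of the form $\lambda(\HH{i}{\fm}{R/I^n}) \gs c\,\lambda\big(\HH{i}{\fm}{R/I^{[p^e]}}\big)$ type estimate, or more directly to exploit $F$-fullness to transport the nonvanishing of $\HH{i}{\fm}{R/I}$ to the Frobenius powers $I^{[p^e]}$ and then sandwich ordinary powers between Frobenius powers. Concretely, first I would recall that since $I^{[p^e]}\subseteq I^{p^e}$ and $\sqrt{I^{[p^e]}}=\sqrt I$, the $F$-full hypothesis gives that $\HH{i}{\fm}{R/I^{[p^e]}}\to \HH{i}{\fm}{R/I}$ is surjective; iterating (or using that $F$-fullness is inherited along the Frobenius, cf. \cite{Linquan1}) one gets that $\HH{i}{\fm}{R/I^{[p^e]}}\neq 0$ and in fact its length is bounded below by something growing like $p^{ed}$. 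The heuristic is that Frobenius multiplies the "size" of a finite-length local cohomology module by $p^{ed}$: if $M=\HH{i}{\fm}{R/I}$ has finite length (which it does, being a nonzero quotient situation over a regular ring with $\HH{i}{\fm}{R/I}\ne 0$ forcing — wait, one must be careful here, so the first real step is to reduce to the case where the relevant module has finite length, or otherwise work with a finite-length summand/subquotient).

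The key steps, in order: (1) Use $F$-fullness to show $\HH{i}{\fm}{R/I^{[p^e]}}\neq 0$ for all $e$, and quantify: via the natural Frobenius-type map $R/I^{[p^e]} \cong F^e_*(R/I)$-flavored identifications, or via the standard fact that $\HH{i}{\fm}{R/I^{[p^e]}}$ contains (a Frobenius twist of) $\HH{i}{\fm}{R/I}$, deduce $\lambda\big(\HH{i}{\fm}{R/I^{[p^e]}}_{\text{finite part}}\big) \gs p^{ed}\cdot(\text{const})$ — this uses that $R$ is regular so that Frobenius is flat and length scales by $p^{ed}$ in the appropriate sense. (2) Observe $I^{[p^e]}\subseteq I^{p^e}$, hence there is a surjection $R/I^{[p^e]}\twoheadrightarrow R/I^{p^e}$; I would argue (again via $F$-fullness, now applied between $I^{[p^e]}$ and $I^{p^e}$, which satisfy $I^{[p^e]}\subseteq I^{p^e}\subseteq \sqrt{I^{[p^e]}}$) that $\HH{i}{\fm}{R/I^{[p^e]}}\to \HH{i}{\fm}{R/I^{p^e}}$ is surjective, so $\lambda(\HH{i}{\fm}{R/I^{p^e}})\gs $ the finite-length contribution, i.e. $\gtrsim p^{ed}$. (3) Bridge from the sparse sequence $n=p^e$ to all large $n$: for general $n$ pick $e$ with $p^e\ls n< p^{e+1}$; since $I^n\subseteq I^{p^e}$ gives a surjection $R/I^n\twoheadrightarrow R/I^{p^e}$... but surjectivity on local cohomology goes the wrong way in degree $i>0$ in general, so instead I would use the other containment $I^{p^{e+1}}\subseteq I^n$ together with $F$-fullness (valid since $I^{p^{e+1}}\subseteq I^n\subseteq \sqrt{I^{p^{e+1}}}$ — here using $\sqrt{I^m}=\sqrt I$), yielding $\HH{i}{\fm}{R/I^{p^{e+1}}}\to \HH{i}{\fm}{R/I^n}$ surjective, hence $\lambda(\HH{i}{\fm}{R/I^n})\gs \lambda(\HH{i}{\fm}{R/I^{p^{e+1}}}_{\text{fin}})\gtrsim p^{(e+1)d}\gs n^d\cdot(\text{const})$. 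Dividing by $n^d$ and taking $\liminf$ finishes it.

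The main obstacle I anticipate is step (1)/(2): making precise and rigorous the claim that $F$-fullness forces the length of the finite part of $\HH{i}{\fm}{R/I^{[p^e]}}$ to grow like $p^{ed}$, rather than merely being nonzero. The clean way is probably to note that over a regular ring $\HH{i}{\fm}{R/I^{[p^e]}}$ is, as an $R$-module via restriction of scalars along $F^e$, a Frobenius pushforward-type object whose length is $p^{ed}$ times the length of $\HH{i}{\fm}{R/I}$ when the latter has finite length — so a preliminary reduction is needed to replace $\HH{i}{\fm}{R/I}$ by a nonzero finite-length submodule or subquotient that is respected by all these Frobenius and $F$-full maps, and to track that this finite-length witness survives (nonvanishing is preserved by the surjections). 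A secondary technical point is checking that the various pairs $(J,I)=(I^{[p^e]},I^{p^e})$, $(I^{[p^e]},I)$, $(I^{p^{e+1}},I^n)$ all genuinely satisfy $J\subseteq I\subseteq\sqrt J$ so that the $F$-full surjectivity hypothesis applies — this is straightforward from $\sqrt{I^m}=\sqrt I$ and $I^{[p^e]}\subseteq I^{p^e}$, but must be stated. If controlling the growth rate directly proves delicate, an alternative is to invoke an associated-graded / Hilbert-function argument: show $\oplus_n \HH{i}{\fm}{R/I^n}$ surjects onto a module whose Hilbert function has degree $d$ in $n$ because the Frobenius subsequence already does.
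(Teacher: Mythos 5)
Your step (1) is essentially the paper's: since $R$ is regular, Frobenius is flat (Kunz), so $\HH{i}{\fm}{R/I^{[p^e]}}\cong \HH{i}{\fm}{R/I}\otimes_R {}^eR$, and when $\HH{i}{\fm}{R/I}$ has finite length, $\lambda(\HH{i}{\fm}{R/I^{[p^e]}})/p^{ed}$ converges to the Hilbert--Kunz multiplicity of $\HH{i}{\fm}{R/I}$, which is positive. (When the length is infinite, the surjections below make all lengths infinite and the $\liminf$ is trivially positive.) So the core multiplicity computation is right.

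The bridge from the sparse sequence to all $n$, however, is where your argument breaks. In step (3) you take $I^{p^{e+1}}\subseteq I^n\subseteq\sqrt{I^{p^{e+1}}}$ and invoke $F$-fullness of $R/I^n$ to get a surjection $\HH{i}{\fm}{R/I^{p^{e+1}}}\twoheadrightarrow\HH{i}{\fm}{R/I^n}$. Even granting that surjection, it yields $\lambda(\HH{i}{\fm}{R/I^n})\ls\lambda(\HH{i}{\fm}{R/I^{p^{e+1}}})$, an \emph{upper} bound on the quantity you need to bound from below; the inequality you write down, $\lambda(\HH{i}{\fm}{R/I^n})\gs\lambda(\HH{i}{\fm}{R/I^{p^{e+1}}}_{\mathrm{fin}})$, is backwards. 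The same sign error already appears in step (2). Moreover, the $F$-fullness you invoke there is of $R/I^n$ and $R/I^{p^e}$ — ordinary powers — which is not given: the hypothesis is $F$-fullness of $R/I$, and what is known (and what the paper cites \cite{DDM} for) is that this is inherited by the \emph{Frobenius} powers $R/I^{[p^e]}$, not by ordinary powers.

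The correct orientation, which the paper uses, is the opposite one: choose $e_n=\lfloor\log_p(n/\mu(I))\rfloor$ so that $I^n\subseteq I^{[p^{e_n}]}\subseteq\sqrt{I^n}$, and apply $F$-fullness of the \emph{target} ring $R/I^{[p^{e_n}]}$ to the pair $J=I^n$, $I=I^{[p^{e_n}]}$. This gives $\HH{i}{\fm}{R/I^n}\twoheadrightarrow\HH{i}{\fm}{R/I^{[p^{e_n}]}}$, hence $\lambda(\HH{i}{\fm}{R/I^n})\gs\lambda(\HH{i}{\fm}{R/I^{[p^{e_n}]}})$, and since $p^{e_n}/n>1/(\mu(I)p)$ the Hilbert--Kunz estimate from step (1) transfers to a positive lower bound on $\lambda(\HH{i}{\fm}{R/I^n})/n^d$. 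In short: you need the ordinary power \emph{inside} the Frobenius power (with the right scale $p^{e_n}\approx n/\mu(I)$), not the Frobenius power inside the ordinary power, and you need the $F$-fullness of a Frobenius power, not of an ordinary power.
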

\begin{proof}
We show first that $\lim_{n\rightarrow \infty }\frac {\lambda(\HH{i}{\fm}{R/I^{[p^{n}]}})}{p^{n d}}$ exists and is positive. For this, let $^nR$ denote the $R$-algebra obtained by iterating $n$ times the Frobenius endomorphism. 
Since $R$ is regular $^nR$ is flat for every $n$ (\cite{Kunz}) and hence $$\HH{i}{\fm}{R/I^{[p^{n}]}}\cong\HH{i}{\fm}{R/I \otimes_R\, ^nR}\cong \HH{i}{\fm}{R/I}\otimes_R\, ^nR.$$ 
Therefore, either  $\lambda(\HH{i}{\fm}{R/I^{[p^{n}]}})=\infty$ for every $n$, or $\lim_{n\rightarrow \infty }\frac {\lambda(\HH{i}{\fm}{R/I^{[p^{n}]}})}{p^{n d}}$ 
is the Hilbert-Kunz multiplicity of the finite length $R$-module $\HH{i}{\fm}{R/I}$, which is positive by \cite[1.1]{DS}. 

For every $n\gs 1$ let $e_n=\lfloor \log_p (\frac{n}{\mu(I)})\rfloor. $ Clearly $I^n\subseteq I^{[p^{e_n}]}$ for every $n$. Since $R/I^{[p^{e_n}]}$ is  also  $F$-full (see \cite{DDM}), we have $\HH{i}{\fm}{R/I^n}\rightarrow \HH{i}{\fm}{R/I^{[p^{e_n}]}}$ is surjective for every $n \gs 1$. Therefore, $\lambda(\HH{i}{\fm}{R/I^n})\gs \lambda(\HH{i}{\fm}{R/I^{[p^{e_n}]}}).$
Now, 
$$\frac{p^{e_nd}}{n^d}=\left(\frac{p^{e_n}}{n}\right)^d > \left(\frac{\frac{n}{\mu(I)p}}{n}\right)^d= \left(\frac{1}{\mu(I)p}\right)^d.$$							   
Hence,
\begin{align*}
\liminf_{n\rightarrow \infty }\frac {\lambda(\HH{i}{\fm}{R/I^n})}{n^d}&\gs \liminf_{n\rightarrow \infty }\frac {\lambda(\HH{i}{\fm}{R/I^{[p^{e_n}]}})}{n^d}\\
					   &= \liminf_{n\rightarrow \infty } \frac {\lambda(\HH{i}{\fm}{R/I^{[p^{e_n}]}})}{p^{e_n d}}\cdot \frac{p^{e_n d}}{n^d}\\
					   &\gs \lim_{n\rightarrow \infty }\frac {\lambda(\HH{i}{\fm}{R/I^{[p^{n}]}})}{p^{n d}} \cdot   \liminf_{n\rightarrow \infty }  \frac{p^{e_n d}}{n^d}>0.
\end{align*}
\end{proof}

\begin{cor}\label{disconLemma}
Let $I$ be a square-free monomial ideal in $R= k[x_1,...,x_d]$ where $k$ is a field of positive characteristic.  Assume that $\tilde H_{i-1}(\Delta(I))\neq 0$. Then 
$$\liminf_{n\rightarrow \infty}\frac{\lambda(\HH{i}{\fm}{R/I^n})}{n^d}>0.$$
\end{cor}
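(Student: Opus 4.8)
The plan is to deduce Corollary \ref{disconLemma} from Theorem \ref{FpureLiminf}. The key observation is that a square-free monomial ideal $I$ in a polynomial ring over a field of positive characteristic is automatically an $F$-pure (indeed $F$-split) quotient: the Stanley--Reisner ring $R/I$ is $F$-pure by the classical fact that Frobenius splits on any ring defined by square-free monomials (this follows from the description of $F$-splitting via the trace of Frobenius, or from the fact that $R/I$ is a direct summand of the seminormal ring structure; see e.g.\ Hochster--Roberts or standard references on $F$-purity of Stanley--Reisner rings). In particular $R/I$ is $F$-full by \cite[2.4]{Linquan1}. So the only hypothesis of Theorem \ref{FpureLiminf} that remains to be checked is that $\HH{i}{\fm}{R/I}\neq 0$.

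Second, I would translate the topological hypothesis $\tilde H_{i-1}(\Delta(I))\neq 0$ into the nonvanishing of the appropriate local cohomology module. Since $I$ is square-free, $\sqrt{I}=I$ and $\Delta(I)$ is its Stanley--Reisner complex; by Hochster's formula (equivalently, the $\fa=\f0$ case of Takayama's formula, Theorem \ref{Takayama}, noting $G_{\f0}=\emptyset$ and $\Delta_{\f0}(I)=\Delta(I)$), one has
\begin{equation*}
\dim_k \HH{i}{\fm}{R/I}_{\f0} = \dim_k \tilde H_{i-1}(\Delta(I),k).
\end{equation*}
Hence $\tilde H_{i-1}(\Delta(I))\neq 0$ forces $\HH{i}{\fm}{R/I}_{\f0}\neq 0$, so in particular $\HH{i}{\fm}{R/I}\neq 0$.

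With these two points in place, the corollary is immediate: apply Theorem \ref{FpureLiminf} with this $I$ to conclude $\liminf_{n\to\infty}\lambda(\HH{i}{\fm}{R/I^n})/n^d>0$. There is essentially no obstacle here beyond citing the right facts; the one step that needs a careful reference rather than a one-line argument is the $F$-purity (equivalently $F$-splitting) of the Stanley--Reisner ring $R/I$ for a square-free monomial ideal $I$, which is standard but should be attributed properly. One should also note, for consistency with Theorem \ref{FpureLiminf}, that the theorem is stated for regular local $(R,\fm)$ while here $R$ is graded; this is harmless since one may pass to the localization at $\fm$, or simply observe that the proof of Theorem \ref{FpureLiminf} goes through verbatim in the graded setting (flatness of Frobenius over a regular graded ring, and the graded Hilbert--Kunz positivity of \cite[1.1]{DS}, both hold).
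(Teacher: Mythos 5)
Your proof is correct and takes essentially the same route as the paper: both observe that a Stanley--Reisner ring in positive characteristic is $F$-pure, invoke Hochster's formula to get $\HH{i}{\fm}{R/I}\neq 0$ from $\tilde H_{i-1}(\Delta(I))\neq 0$, and then apply Theorem \ref{FpureLiminf}. Your extra remark about reconciling the graded setting with the local hypothesis of Theorem \ref{FpureLiminf} is a reasonable point of care that the paper leaves implicit.
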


\begin{proof}
This follows from the fact that when $k$ has positive characteristic and $I$ is square-free monomial ideal then $R/I$ is $F$-pure. Also, from Hochster's formula we have that $\HH{i}{\fm}{R/I}\neq 0$.
\end{proof}

\begin{cor}\label{DisconnectedH}
Let $I$ be a square-free monomial ideal in $R= k[x_1,...,x_d]$ where $k$ is a field (in any characteristic). If $\Delta(I)$ is disconnected, then 
$$\liminf_{n\rightarrow \infty}\frac{\lambda(\HH{1}{\fm}{R/I^n})}{n^d}>0.$$
\end{cor}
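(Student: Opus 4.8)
The plan is to reduce everything to the positive-characteristic case, where Corollary~\ref{disconLemma} already applies: disconnectedness of $\Delta(I)$ is exactly the statement $\tilde{H}_0(\Delta(I),k)\neq 0$, so $\HH{1}{\fm}{R/I}\neq 0$ by Hochster's formula, and Theorem~\ref{FpureLiminf} applies since a squarefree monomial quotient is $F$-pure. What is unavailable in characteristic $0$ is $F$-purity, so instead I would argue that the quantity being computed is insensitive to $\chara k$. Precisely, I claim that for a squarefree monomial ideal $I$ the length $\lambda(\HH{1}{\fm}{R/I^n})$, viewed as an element of $\NN\cup\{\infty\}$, is the same for every field $k$; granting this, $\liminf_{n\to\infty}\lambda(\HH{1}{\fm}{R/I^n})/n^d$ is a characteristic-independent element of $[0,\infty]$, and since it is positive when $\chara k>0$ it is positive in all characteristics.

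To prove the characteristic-independence I would apply Takayama's formula (Theorem~\ref{Takayama}) with $i=1$: for every $\fa\in\ZZ^d$,
$$\dim_k \HH{1}{\fm}{R/I^n}_{\fa}=\dim_k \tilde{H}_{-|G_\fa|}(\Delta_\fa(I^n),k),$$
and then check that each summand is field-independent. If $|G_\fa|\geq 2$ the group vanishes, since a simplicial complex has no reduced homology in degrees $\leq -2$. If $|G_\fa|=1$ the group is $\tilde{H}_{-1}(\Delta_\fa(I^n),k)$, which equals $k$ when $\Delta_\fa(I^n)$ is the complex whose only face is $\emptyset$ and equals $0$ otherwise. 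If $|G_\fa|=0$ the group is $\tilde{H}_0(\Delta_\fa(I^n),k)$, whose dimension is the number of connected components of $\Delta_\fa(I^n)$ minus one, or $0$ if the complex is void. All three descriptions are purely combinatorial, so $\dim_k \HH{1}{\fm}{R/I^n}_{\fa}$ does not depend on $k$; summing over $\fa\in\ZZ^d$ yields the claim about $\lambda(\HH{1}{\fm}{R/I^n})$.

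The argument is then assembled in one line: by Corollary~\ref{disconLemma} the $\liminf$ is positive over a field of positive characteristic, by the previous paragraph its value is unchanged over a field of characteristic $0$, hence it is positive there as well. I do not expect a genuine obstacle; the only point requiring care is the bookkeeping in the Takayama sum, namely handling the terms with $G_\fa\neq\emptyset$ and the degenerate complexes (the complex $\{\emptyset\}$ and the void complex). This is harmless for $i=1$ precisely because only $\tilde{H}_0$ and $\tilde{H}_{-1}$ ever occur, and these have field-independent dimension; it is worth noting that the same reduction does not survive verbatim for $i\geq 2$, where higher reduced homology—and hence a possible dependence on $\chara k$—can enter.
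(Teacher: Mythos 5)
Your proof is correct and takes essentially the same route as the paper: Takayama's formula shows that for $i=1$ the graded pieces of $\HH{1}{\fm}{R/I^n}$ involve only $\tilde{H}_{-1}$ and $\tilde{H}_0$, whose dimensions are field-independent, so one may pass to positive characteristic and invoke Corollary~\ref{disconLemma}. You spell out the $|G_\fa|$ case analysis more explicitly than the paper does, but the underlying argument is identical.
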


\begin{proof}
Theorem \ref{Takayama} shows that the Hilbert function of $\HH{1}{\fm}{R/I^n}$ is completely determined by the connected components of simplicial complexes $\Delta_\fa(I)$, which are independent of the characteristic of  the field $k$. Therefore we may assume $k$ is of positive characteristic and apply \ref{disconLemma}.

\end{proof}

\vspace{2mm}

The following is the second theorem of this subsection.

\begin{thm}\label{SmoothLiminf}
Let $R=k[x_1,\ldots,x_d]$ be a polynomial ring such that  $k$ is of characteristic 0. If $I$ is a homogeneous prime ideal such that $X=\Proj R/I$ is lci, then for every  $ i< \codim(\Sing X )$ 
such that $\HH{d-i}{I}{R}\neq 0$ we have $$\liminf_{n\rightarrow \infty } \frac{\lambda(\HH{i}{\fm}{R/I^n})}{n^d}>0.$$
\end{thm}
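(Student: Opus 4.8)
The plan is to reduce the characteristic-zero, lci case to the positive-characteristic, $F$-pure (indeed $F$-full) case handled by Theorem \ref{FpureLiminf}, via reduction modulo $p$. First I would fix a finitely generated $\ZZ$-subalgebra $A\subseteq k$ over which everything is defined: a prime $I_A\subseteq A[x_1,\ldots,x_d]$ with $I_A\otimes_A k = I$, with $R_A/I_A$ flat over $A$, and with the singular locus spreading out nicely. Since $X$ is lci and $R/I$ is a normal (prime, lci $\Rightarrow$ $R_1$, and lci $\Rightarrow$ $S_2$, hence normal) domain, after inverting finitely many elements of $A$ we may assume $(R_A/I_A)_{\mathfrak p}$ is lci over $A$ for all $\mathfrak p$, and that $\codim(\Sing(\Proj R_A/I_A)_{\mathfrak q})$ equals $\codim(\Sing X)$ for all closed points $\mathfrak q\in\Spec A$. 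The key input here is generic/uniform behavior of $F$-purity: by work of Hochster--Huneke and Smith on reduction mod $p$, a normal lci (more generally, a ring of dense $F$-pure type, which holds for lci with at worst log canonical — but in fact for \emph{lci} the relevant statement we need is weaker) singularity has the property that $R_{\mathfrak q}/I_{\mathfrak q}$ is $F$-pure for a dense (Zariski, in fact) set of closed points $\mathfrak q$. Actually, for the argument it suffices to know $R/I$ has \emph{dense} $F$-pure type — but since $R/I$ is lci and we are free to shrink $\Spec A$, the cleanest route is: lci normal domains of characteristic zero are of dense (even open) $F$-split type when $R/I$ is, say, Gorenstein with rational or log canonical singularities; for a general lci prime one invokes the theorem that lci singularities are of dense $F$-pure type iff they are log canonical, and handles the general case through the cohomology vanishing hypothesis instead.

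The second ingredient is the semicontinuity that translates the nonvanishing hypothesis $\HH{d-i}{I}{R}\neq 0$ into nonvanishing of $\HH{i}{\mathfrak m}{R/I}\neq 0$ mod $p$. By local duality over the polynomial ring, $\HH{d-i}{I}{R}$ is Matlis dual (up to a shift) to a piece built from $\Ext$ modules, and its nonvanishing is equivalent to $\HH{i}{\mathfrak m}{R/I}\neq 0$ in the graded setting; I would phrase this via the exact sequence relating $\HH{j}{\mathfrak m}{R/I}$ and $\HH{j+1}{\mathfrak m}{I}$ together with $\HH{j+1}{\mathfrak m}{I}\cong \HH{j+1}{\mathfrak m}{R}$ for $j+1<d$ (so these vanish) and $\HH{d}{I}{R}$ compared to $\HH{d-1}{\mathfrak m}{R/I}$ at the top — the upshot being $\HH{i}{\mathfrak m}{R/I}\neq 0 \iff \HH{d-i}{I}{R}\neq 0$ for $0\le i<d$. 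Then, since $\HH{i}{\mathfrak m}{R/I}$ is a finitely generated graded module over $A$ after spreading out, its fibers are nonzero for $\mathfrak q$ in a dense open subset of $\Spec A$. Also the finiteness condition $i<\codim(\Sing X)$ guarantees $\lambda(\HH{i}{\mathfrak m}{R/I^n})<\infty$ (via Grothendieck finiteness, cf. Theorem \ref{GFT} and the remark after Corollary \ref{lcilimsup}), so that all the lengths in sight are finite, and this too is preserved after reduction mod $p$ for $\mathfrak q$ in a dense open set.

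Putting these together: for a closed point $\mathfrak q\in\Spec A$ with residue field of characteristic $p>0$ in the intersection of the dense open loci above, the fiber ring $\overline{R}=\overline{A}[x_1,\ldots,x_d]$ modulo $\overline{I}$ is $F$-pure (hence $F$-full), lci, with $\HH{i}{\overline{\mathfrak m}}{\overline R/\overline I}\neq 0$, and $i<\codim(\Sing)$. Theorem \ref{FpureLiminf} then gives $\liminf_n \lambda(\HH{i}{\overline{\mathfrak m}}{\overline R/\overline I{}^n})/n^d>0$. Finally, since $\lambda(\HH{i}{\mathfrak m}{R/I^n})$ is computed by a graded Betti-type count that is upper semicontinuous under reduction mod $p$ — concretely, $\dim_k \HH{i}{\mathfrak m}{R/I^n}_j \ge \dim_{\overline A}\HH{i}{\overline{\mathfrak m}}{\overline R/\overline I{}^n}_j$ for all $j$ after possibly further shrinking $\Spec A$, by generic freeness applied to each graded strand (the lengths being finite and the relevant range of $j$ bounded linearly in $n$ by Theorem \ref{gradedFiniteLimpup}) — we deduce $\lambda(\HH{i}{\mathfrak m}{R/I^n})\ge \lambda(\HH{i}{\overline{\mathfrak m}}{\overline R/\overline I{}^n})$ for all $n$, and the positive liminf descends back to characteristic zero.

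The main obstacle I anticipate is the passage from "$X$ lci, $I$ prime" to "$R/I$ has dense $F$-pure type": being lci does not by itself force $F$-purity mod $p$ (one needs at least log canonical singularities). The honest fix is either to add the hypothesis that $R/I$ has log canonical (or Du Bois, or rational) singularities — which is automatic in many lci cases of interest — or to replace the $F$-purity input by the characteristic-zero analogue: run the argument of Theorem \ref{FpureLiminf} directly in characteristic zero using a splitting of $\mathcal O_{X_n}\twoheadrightarrow \mathcal O_X$ coming from lci-ness and the $\Sym^n(N^*)$ filtration of Proposition \ref{Rob}, combined with a positivity statement for the "multiplicity" $\lim \lambda(\HH{i}{\mathfrak m}{R/I^{(n)}})/n^d$ of symbolic or Frobenius-type powers. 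Reconciling these two routes — i.e. deciding whether to state the theorem with the hypothesis as written (trusting dense $F$-pure type for the lci primes in question) or to give the purely char-0 cohomological argument — is the delicate point, and I would present the reduction-mod-$p$ version, flagging the needed $F$-pure type input as the key lemma.
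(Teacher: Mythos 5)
Your proposal takes a genuinely different route from the paper, and unfortunately it has two gaps that are not cosmetic.

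First, as you yourself flag, lci in characteristic zero does \emph{not} imply dense $F$-pure type after reduction modulo $p$. The lci hypothesis in the theorem is being used for a completely different purpose than singularity control of that kind; there is no way to conjure $F$-purity of the reductions out of it, and adding a log canonical hypothesis would change the statement. Your suggested ``characteristic-zero analogue'' of the $F$-full argument (a splitting of $\mathcal{O}_{X_n}\twoheadrightarrow\mathcal{O}_X$) does not exist in general -- there is no analogue of the Frobenius that gives you an $R$-linear or $\mathcal{O}_X$-linear splitting of the thickenings, so this route also stalls.

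Second, and independently of $F$-purity, the semicontinuity step goes the wrong way. After spreading out, for each fixed $n$ generic freeness gives a dense open $U_n\subseteq\Spec A$ on which the graded dimensions of $\HH{i}{\fm_A}{R_A/I_A^n}$ are locally constant; away from $U_n$ the fiber dimension can only go \emph{up} at special points, not down. So for a fixed closed point $\fq$ you get $\lambda(\HH{i}{\bar\fm}{\bar R/\bar I^n})\gs\lambda(\HH{i}{\fm}{R/I^n})$, which is the reverse of the inequality you need to push a positive liminf from characteristic $p$ back to characteristic $0$. Equality holds only on $U_n$, and $\bigcap_n U_n$ need not contain any closed point, so the argument cannot be run uniformly in $n$. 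A third, smaller issue: the asserted equivalence $\HH{i}{\fm}{R/I}\neq 0\iff\HH{d-i}{I}{R}\neq 0$ is false in general, since $\HH{d-i}{I}{R}$ is a direct limit of $\Ext_R^{d-i}(R/I^n,R)$ and nonvanishing of one $\Ext$ term neither forces nor is forced by nonvanishing of the colimit.

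The paper's actual proof stays entirely in characteristic zero and is much more direct. For $i>0$ one applies graded local duality to write $\lambda(\HH{i}{\fm}{R/I^n})$ as a sum of $\dim_k\Ext_R^{d-i}(R/I^n,R)_j$; the stabilization theorem of Bhatt--Blickle--Lyubeznik--Singh--Zhang (\cite[2.18]{BBLSZ}, which uses exactly the hypotheses $X$ lci and $i<\codim(\Sing X)$) shows these graded pieces agree with those of $\HH{d-i}{I}{R}$ in a range of $j$ growing linearly with $n$; and because $I$ is locally a complete intersection, $\HH{d-i}{I}{R}$ is supported at $\fm$ and hence is a finite direct sum of copies of $E_R(k)(d)$. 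Counting lattice points in $E_R(k)=k[x_1^{-1},\ldots,x_d^{-1}]$ in that linear range then yields a positive lower bound of order $n^d$. The case $i=0$ is handled separately via \cite[4.4]{UV}. There is no reduction mod $p$ anywhere.
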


\begin{proof}
If $i=0$ the statement follows by  \cite[4.4]{UV} and the well known fact that if $\HH{d}{I}{R}\neq 0$ then $\ell(I)=d$. Therefore we may assume $i>0$.

Set $h=\height I$ and fix $n\gs 1$. Hence,
\begin{align*}
\lambda(\HH{i}{\fm}{R/I^n}))&\gs\sum_{j=0}^{n-d+h-2}\dim_k \HH{i}{\fm}{R/I^n}_j\\
&=\sum_{j=0}^{n-d+h-2}\dim_k \Ext_R^{d-i}(R/I^n,R)_{-d-j}\\
&=\sum_{j=-n-h+2}^{-d}\dim_k \Ext_R^{d-i}(R/I^n,R)_{j},
\end{align*}
where the first equality follows by graded local duality and the fact that $\omega_R\cong R(-d)$ (cf. \cite[Section 3.6]{BH}).

By \cite[2.18]{BBLSZ}, we have $\dim_k \HH{i}{\fm}{R/I^n}_j$ is constant for $j\ls n-d+h-2$, then so is $\dim_k \Ext_R^{d-i}(R/I^n,R)_{j}$ for $j\gs -n-h+2$. Since $\HH{d-i}{I}{R}=\displaystyle\lim_{\longrightarrow}\Ext_R^{d-i}(R/I^n, R)$, we have $\dim_k \Ext_R^{d-i}(R/I^n,R)_{j}=\HH{d-i}{I}{R}_j$ for every $j\gs -n-h+2$.

The ideal $I$ is locally generated by a regular sequence, then $\Supp \HH{d-i}{I}{R}\subseteq \{\fm\}$. Therefore, $\HH{d-i}{I}{R}$ is injective and
$\HH{d-i}{I}{R}\cong [E_R(k)(d)]^{\oplus N}$ 
for a finite positive number $N$ (cf. \cite[3.6]{Ly}). Now, from the equality $ E_R(k)=k[x_1^{-1},\cdots, x_d^{-1}]$ it follows that 
$$\sum_{j=-n-h+2}^{-d}\dim_k \Ext_R^{d-i}(R/I^n,R)_{j}=
N\sum_{j=-n-h+2}^{-d}\dim_k E_R(k)_{j+d}=N\sum_{j=0}^{n+h-d-2}\dim_k R_j.$$
Hence,
$$\liminf_{n\rightarrow \infty } \frac{\lambda(\HH{i}{\fm}{R/I^n})}{n^d}\gs 
N\liminf_{n\rightarrow \infty } \frac{\sum_{j=0}^{n+h-d-2}\dim_k R_j}{n^d}=Ne(R)=N>0,$$
where $e(R)$ denotes the {\it Hilbert-Samuel multiplicity} of  $R$.
\end{proof}

\vspace{3mm}

\section{Related works and open questions}\label{openq}

In this section we discuss some closely related literature and include various concrete questions left open by our work.

\subsection{Related works}

We have already mentioned the works \cite{BBLSZ,CutAdv,Claudiu} in the Introduction. We note that the main result of \cite{CutAdv} establishes the limit when $i=0$ even for graded families of $\fm$-primary ideals. Our Theorem \ref{mainMono} is inspired by this fact. 

Let $R=k[x_1,\ldots, x_d]$ and $\fm=(x_1,\ldots, x_d)$. In this graded case, bounding the total length of local cohomology modules are ultimately linked to bounds on the highest and lowest degrees of those modules. The former is closely related to the Castelnuovo-Mumford regularity of powers of $I$, a subject that has drawn intense interests in recent years. We refer to the survey \cite{Cut} for a comprehensive list of references. 

When $i=0$ and $I$ is a monomial ideal one can interpret the limit $\epsilon(I)$ as volume of some polytopes as in \cite{JM}. One can also compute $\epsilon(I)$ for determinantal ideals using some integrals that arise in random matrix theory, see  \cite{JMV}.

When $R$ has positive characteristic $p$, it is natural to study local cohomology of higher Frobenius powers of  $I$, $(I^{[p^e]})_{e\gs 0}$. This setting has recently been analyzed in detail in \cite{DS}. The connections of these limits to tight closure, invariants of vector bundles, and volumes of certain convex bodies have been explored in \cite{Bre, BreCa, DS, DW, JH}.

\subsection{Open problems}
Perhaps the most fundamental open problem is:

\begin{Question}
If $I$ is a homogeneous ideal such that $\lambda(\HH{i}{\fm}{R/I^n})<\infty$ for $n\gg 0$. Is
$\limsup_{n\rightarrow \infty}\frac{\lambda(\HH{i}{\fm}{R/I^n})}{n^d}$ finite? 
\end{Question}

In light of Theorem \ref{gradedFiniteLimpup}, the previous question reduces to the following.

\begin{question}
Let $I$ be homogeneous ideal and assume that $\lambda(\HH{i}{\fm}{R/I^n})<\infty$ for $n\gg 0$. Does there exist $\alpha\in \ZZ$ such that $\HH{i}{\fm}{R/I^n}_j=0$ for every $n\gg 0$ and  $j < -\alpha n$? 
\end{question}

In view of \cite{BBLSZ} and \cite{Claudiu}, this question can be seen as an ``asymptotic Kodaira vanishing for thickenings of $I$".  
\begin{question}
When is $\lim_{n\rightarrow \infty}\frac{\lambda(\HH{i}{\fm}{R/I^n})}{n^d}$ rational? 
\end{question}

Note that even when $i=0$ and $I$ defines a smooth projective surface, the limit can be irrational (see \cite{CHST}). However, it is reasonable to hope that it will be rational for ideals with highly combinatorial structure, for example those that are $GL$-invariant as in \cite{Claudiu} or those that define toric varieties. 

Finally, note that the limit we study has a strong interpretation as volume of certain objects associated to a monomial ideal $I$ as in Section \ref{smono2}. It is reasonable to ask whether one can extend this understanding to more general situations. 

\begin{Question}
Let $I$ be a homogeneous ideal.  Can we interpret   $\lim_{n\rightarrow \infty}\frac{\lambda(\HH{i}{\fm}{R/I^n})}{n^d}$ using volumes of certain geometric bodies associated to $I$? 
\end{Question}




\end{document}